\newtheorem{theorem}{Theorem}[section]
\newtheorem{corollary}[theorem]{Corollary}
\newtheorem{proposition}[theorem]{Proposition}
\newtheorem{lemma}[theorem]{Lemma}
\newtheorem{definition}[theorem]{Definition}
\newtheorem{remark}[theorem]{Remark}
\newtheorem{problem}[theorem]{Problem}
\newtheorem{conjecture}[theorem]{Conjecture}
\newcommand{\St}{{S}}
\newcommand{\invtPoly}{\mathcal{P}}
\DeclareMathOperator{\im}{im}
\newcommand{\CRN}{chemical reaction network }
\newcommand{\R}{\mathbb{R}}
\newcommand{\Rnn}{\mathbb{R}_{\geq 0}}
\def\lra{\leftrightarrows}
\def\been{\begin{enumerate}}
\def\enen{\end{enumerate}}
\def\SS{\mathcal S}
\def\CC{\mathcal C}
\def\RR{\mathcal R}
\begin{document}

\title{A survey of methods for deciding whether a reaction network is multistationary}


\author{Badal Joshi  \and  Anne Shiu }

\maketitle




\abstract{Which reaction networks, when taken with mass-action kinetics, have the capacity for multiple steady states?  There is no complete answer to this question, but over the last 40 years various criteria have been developed that can answer this question in certain cases.  This work surveys these developments, with an emphasis on recent results that connect the capacity for multistationarity of one network to that of another. In this latter setting, we consider a network $N$ that is {\em embedded} in a larger network $G$, which means that $N$ is obtained from $G$ by removing some subsets of chemical species and reactions. 
This embedding relation is a significant generalization of the subnetwork relation.  
For arbitrary networks, it is not true that if $N$ is embedded in $G$, then the steady states of $N$ lift to $G$. Nonetheless, this does hold for certain classes of networks; one such class is that of 
fully open networks. This motivates the search for {\em embedding-minimal multistationary networks}: those networks which admit multiple steady states but no proper, embedded networks admit multiple steady states. We present results about such minimal networks, including several new constructions of infinite families of these networks.
}

\noindent {\bf Keywords:} chemical reaction networks, mass-action kinetics, multiple steady states, deficiency, injectivity

%

\maketitle


\section{Introduction}
Reaction networks taken with mass-action kinetics arise in many scientific areas, from epidemiology (the SIR model) to population biology (Lotka--Volterra) to systems of chemical reactions.  Indeed, reaction networks often form a suitable 
modeling framework when the variables of interest take non-negative values. 
In the resulting mathematical model, a unique positive stable steady state in the mathematical model can underlie robustness in the corresponding biological system; 
conversely, the existence of multiple positive stable steady states can explain switching behavior in the biological system. 

This motivates the following important open question:  
which reaction networks, when taken with mass-action kinetics, have the capacity for multistationarity?  In other words, for which networks do there exist a choice of positive reaction rate constants and a choice of a forward-invariant set (or equivalently, a choice of non-negative initial concentrations) within which the corresponding dynamical system admits two or more (nondegenerate) steady states?  

At this time, there is no complete answer to this question; indeed, multistationary networks have not been completely catalogued.   Nonetheless, over the last 40 years various criteria have been developed that can either preclude or guarantee multistationarity for certain classes of networks.   The aim of the present article is to review these existing criteria, with an emphasis on recent results that connect the capacity for multistationarity of one network to that of certain related networks.
We will take a broad view, instead of focusing on specific families arising in chemistry or biochemistry.
For a historical survey of experimental  findings concerning multistationarity, see the book of Marin and Yablonsky~\cite[Chapter 8]{MY}.  

Among the criteria mentioned above for precluding multistationarity 
are
the injectivity criterion~\cite{ME1} and
 the deficiency zero and deficiency one theorems~\cite{FeinDefZeroOne}.
However, failing both the injectivity criterion and the conditions of the deficiency theorems is not sufficient for admitting multiple steady states. An instance of such a network is the following:
\begin{align*}
B \leftrightarrows 0  \leftrightarrows A \quad \quad 
3A + B  \to 2A + 2B 
\end{align*}
(For readers familiar with the literature, this network has deficiency one, and each of the two linkage classes have deficiency zero, so neither the deficiency zero nor deficiency one theorems applies.  Also, the injectivity criterion fails.) 
For this network, a simple calculation rules out multiple steady states, or the deficiency one algorithm can be applied to reach the same conclusion.

A somewhat more complicated example is the following:
\begin{align*}
0  \leftrightarrows A \quad  \quad 0  \leftrightarrows B \quad  \quad 0  \leftrightarrows C \\
2A  \leftrightarrows  A+B \quad  \quad A+C \leftrightarrows B+C 
\end{align*}
In this case, the injectivity criterion fails, and the network has deficiency two -- which implies that the deficiency theorems do not apply, nor does the deficiency one algorithm. Nevertheless, the advanced deficiency algorithm rules out multistationarity in this case. 

By way of comparison, consider the next network:
\begin{align*}
0  \leftrightarrows A \quad  \quad 0  \leftrightarrows B \quad  \quad 0  \leftrightarrows C \\
2A  \leftrightarrows  A+B \quad \quad A+B \leftrightarrows B+C
\end{align*}
where the only change from the previous network is that the complex $A+C$ in the last reaction has been replaced by the complex $A+B$. This time we find that the network does admit multiple steady states, because (1) it contains outflow reactions (such as $A \to 0$) for all three chemical species ($A, B, C$), and (2) a so-called ``CFSTR atom of multistationarity'' network, namely, $\{0  \leftrightarrows A~,~ 0  \leftrightarrows B ~,~ 0  \leftrightarrows C ~,~ B \to A+B ~,~ 2A \to A+B  \}$ is ``embedded'' in the network (see Theorem~\ref{thm:embedding-mss} and Corollary~\ref{cor:cfstr-1-rxn}).

Concerning more complicated networks, how can we assess multistationarity? In order to address this question, some recent work including our own has adopted the following strategy: instead of working with a specific network, can we relate the properties of two or more networks? Specifically, does a (large) network inherit some properties (such as multistationarity) of certain simpler related networks?

This question is intimately connected with and motivated by the problem of {\em model choice} (cf.\ \cite{FeliuWiuf}). A biological system displays a myriad of interactions among its components which take place at a wide variety of temporal scales. A good modeling design involves critical decisions about which components to incorporate within the model and which ones can be safely left out. In turn, any such decision about inclusion and exclusion of features is guided by the properties that the modeler is interested in examining as the model output, be it stability, multistationarity, oscillations, chaos, or some other behavior. 

Within a cell, a huge array of chemical species might interact with each other in a variety of ways. In this setting, suppose a model consists of a set of chemical reactions including one of the form $A + B \to 2C$; at least three major implicit assumptions typically underlie this choice of model:
\been
\item {\em The presence or absence of other chemical species, besides $A$, $B$ and $C$, does not affect the rate of the reaction in which $A$ and $B$ combine to produce two molecules of $C$.} However, future work may invalidate this assumption if further experimentation reveals that the reaction occurs only in the presence of an enzyme $E$, and thus the true chemical process is more accurately represented by one or more reactions involving $E$, such as $A+B + E \to 2C + E$.
\item {\em The chemical reactions that are included in the model are the only ones that are necessary to produce the dynamical range of behaviors under study, and inclusion of additional reactions will not significantly affect this dynamical repertoire.} Of course, it may turn out that species $A$ participates in an as-yet-unreported set of reactions involving a new substrate $F$, thus causing $A$ to be sequestered (and thus unavailable) for the original reaction $A+B \to 2C$ when the substrate $F$ is introduced into the reaction vessel or cell. 
\item {\em At the time scale of interest, the reaction $A + B \to 2C$ is a one-step reaction, in other words, there are no intermediate complexes that form as a molecule of $A$ combines with a molecule of $B$ to yield two molecules of $C$, and if any such intermediates do form, the time scale at which the intermediate species exist is much shorter than the time scale at which the species $A$, $B$, and $C$ are stable.} For instance, the reaction $A+B \to 2C$, when modeled at a finer temporal scale may be instead $A+B \lra AB \to 2C$. 
\enen
%
\noindent
Of course, assumptions such as these ones are unavoidable because biological processes tend to be quite complex, and even if the underlying mechanisms are known to a high level of detail, modeling every known feature is unrealistic. 
Indeed, underlying all modeling effort is an appeal to an implicit, redeeming principle that certain properties of the system are likely to be invariant to the level of detail included in the problem, once certain essential ingredients are incorporated into the model. 

In this article, we explore the validity of this implicit assumption by probing it closely in the case of multistationarity of reaction networks: how accurately are the system dynamics reflected by our choice of which reactions, species, and complexes are included in the model?  Can the number of steady states increase with the inclusion of more reactions or species? Can multistationarity be destroyed by increasing the level of detail included in the description of the model? Do the number of intermediate complexes (representing, for instance, various forms of an enzyme-substrate complex) in a model affect multistationarity?  We take up these questions and describe some results in this direction.

This article is organized as follows.
Section~\ref{sec:background} introduces chemical reaction networks and the dynamical systems they define. 
Section~\ref{sec:mss} reviews existing criteria pertaining to the capacity of a given network for multistationarity.
In Sections~\ref{sec:emb}--\ref{sec:atom}, we recall what it means for one network to be ``embedded'' in another, give some properties of this relation, and review existing criteria for when multiple steady states can be lifted from an embedded network. 
In Sections~\ref{sec:infinitely}--\ref{sec:infinitely2}, we demonstrate the existence of infinitely many embedding-minimal multistationary and multistable networks.  Finally, a discussion appears in Section~\ref{sec:openQ}.

\section{Background} \label{sec:background}
In this section we recall how a chemical reaction network gives rise to a dynamical system.  

We begin with an example of a {\em chemical reaction}: $A+B \to 3A + C$.  
%
In this reaction, one unit of chemical {\em species} $A$ and one of $B$ react 
to form three units of $A$ and one of $C$.  
The  {\em reactant} $A+B$ and the {\em product} $3A+C$ are called {\em complexes}. 
The concentrations of the three species, denoted by $x_{A},$ $x_{B}$, and
$x_{C}$, 
will change in time as the reaction occurs.  Under the assumption of {\em
mass-action kinetics}, species $A$ and $B$ react at a rate proportional to the
product of their concentrations, where the proportionality constant is the {\em reaction rate
constant} $\kappa$.  Noting that the reaction yields a net change of two units in
the amount of $A$, we obtain the first differential equation in the following
system, and the other two arise similarly:
\begin{align*}
\frac{d}{dt}x_{A}~&=~2\kappa x_{A}x_{B}~ \\
\frac{d}{dt}x_{B} ~&=~-\kappa x_{A}x_{B}~ \\
\frac{d}{dt} x_{C}~&=~\kappa x_{A}x_{B}~.
\end{align*}
 A {\em chemical reaction network}
consists of finitely many reactions.  
The mass-action differential equations that a network defines are comprised of a 
sum of the monomial contribution from the reactant of each 
chemical reaction in the network; these 
differential equations 
will be defined in equation~(\ref{eq:ODE-mass-action}).


\subsection{Chemical reaction systems}
We now provide precise definitions.  

\begin{definition}
A {\em chemical reaction network} $G=(\SS,\CC,\RR)$
consists of three finite sets:
\begin{enumerate}
\item a set of chemical {\em species} $\SS = \{A_1,A_2,\dots, A_s\}$, 
\item a set  $\CC = \{y_1, y_2, \dots, y_p\}$ of {\em complexes} (finite nonnegative-integer combinations of the species), and 
\item a set of {\em reactions}, which are ordered pairs of the complexes: $\RR \subseteq \CC \times \CC$.
\end{enumerate}
\end{definition}

Throughout this work, the integer unknowns~$p$, $s$, and $r$ denote the numbers of
complexes, species, and reactions, respectively.  
Writing the $i$-th complex as $y_{i1} A_1 + y_{i2} A_2 + \cdots + y_{is}A_s$ (where $y_{ij} \in \mathbb{Z}_{\geq 0}$ for $j=1,2,\dots,s$), 
we introduce the following monomial:
$$ x^{y_i} \,\,\, := \,\,\, x_1^{y_{i1}} x_2^{y_{i2}} \cdots  x_s^{y_{is}}~. $$
For example, the two complexes in the reaction $A+B \to 3A + C$ considered earlier give rise to 
the monomials $x_{A}x_{B}$ and $x^3_A x_C$, which determine the vectors 
$y_1=(1,1,0)$ and $y_2=(3,0,1)$.  
These vectors define the rows of a $p \times s$-matrix of nonnegative integers,
which we denote by $Y=(y_{ij})$.
Next, the unknowns $x_1,x_2,\ldots,x_s$ represent the
concentrations of the $s$ species in the network,
and we regard them as functions $x_i(t)$ of time $t$.

For a reaction $y_i \to y_j$ from the $i$-th complex to the $j$-th
complex, the {\em reaction vector}
 $y_j-y_i$ encodes the
net change in each species that results when the reaction takes
place.  
We associate to each reaction 
a positive parameter $\kappa_{ij}$, the rate constant of the
reaction.  In this article, we will treat the rate constants $\kappa_{ij}$ as positive
unknowns in order to analyze the entire family of dynamical systems
that arise from a given network as the $\kappa_{ij}$'s vary.  

A network can be viewed as a directed graph whose nodes are complexes and whose edges correspond to the reactions. A {\em linkage class} is a connected component of the directed graph: the complexes $y$ and $y'$ belong to the same linkage class if and only if there is a sequences of complexes $(y_0:=y, y_1, \ldots, y_{n-1}, y_n:=y')$ such that either $y_i \to y_{i+1}$ or $y_{i+1} \to y_{i}$ is a reaction for all $0 \le i \le n-1$. A network is said to be \emph{weakly reversible} if every connected component of the network is strongly connected.  A reaction $y_i \to y_j$ is {\em reversible} if its reverse reaction $y_j \to y_i$ is also in $\RR$; these reactions may be depicted as $y_i \rightleftharpoons y_j$.

The {\em stoichiometric matrix} 
$\Gamma$ is the $s \times r$ matrix whose $k$-th column 
is the reaction vector of the $k$-th reaction 
i.e., it is the vector $y_j - y_i$ if $k$ indexes the 
reaction $y_i \to y_j$.
The {\em reaction matrix} is the negative  of the transpose, $-\Gamma^t \in \mathbb{Z}^{r \times s}$, and the 
{\em reactant (source) matrix} is the $r \times s$ matrix whose $k$-th row is $y_i$ if $k$ indexes the 
reaction $y_i \to y_j$. 

The choice of kinetics is encoded by a locally Lipschitz function $R:\Rnn^s \to \R^r$ that encodes the reaction rates of the $r$ reactions as functions of the $s$ species concentrations.
The {\em reaction kinetics system} 
defined by a reaction network $G$ and reaction rate function $R$ is given by the following system of ODEs:
\begin{align} \label{eq:ODE}
\frac{dx}{dt} ~ = ~ \Gamma \cdot R(x)~.
\end{align}
For {\em mass-action kinetics}, which is the setting of this paper, the coordinates of $R$ are
$ R_k(x)=  \kappa_{ij} x^{y_i}$, 
 if $k$ indexes the reaction $y_i \to y_j$.  
A {\em chemical reaction system} refers to the 
dynamical system (\ref{eq:ODE}) arising from a specific chemical reaction
network $(\SS, \CC, \RR)$ and a choice of rate parameters $(\kappa^*_{ij}) \in
\mathbb{R}^{r}_{>0}$ (recall that $r$ denotes the number of
reactions) where the reaction rate function $R$ is that of mass-action
kinetics.  Specifically, the mass-action ODEs are the following:
\begin{align} \label{eq:ODE-mass-action}
\frac{dx}{dt} \quad = \quad \sum_{ y_i \to y_j~ {\rm is~in~} \RR} \kappa_{ij} x^{y_i}(y_j - y_i) \quad =: \quad f_{\kappa}(x)~,
\end{align}

The {\em stoichiometric subspace} is the vector subspace of
$\mathbb{R}^s$ spanned by the reaction vectors
$y_j-y_i$, and we will denote this
space by $\St$: 
\begin{equation} \label{eq:stoic_subs}
  \St~:=~ \mathbb{R} \{ y_j-y_i \mid  y_i \to y_j~ {\rm is~in~} \RR \}~.
\end{equation}
  Note that in the setting of (\ref{eq:ODE}), one has $\St = \im(\Gamma)$.
For the network consisting of the single reaction $A+B \to 3A + C$, we have $y_2-y_1 =(2,-1,1)$, which
means 
that with each occurrence of the reaction, two units of $A$ and one of $C$ are
produced, while one unit of $B$ is consumed.  This vector $(2,-1,1)$ spans the
stoichiometric subspace $\St$ for the network. 
Note that the  vector $\frac{d x}{dt}$ in  (\ref{eq:ODE}) lies in
$\St$ for all time $t$.   
In fact, a trajectory $x(t)$ beginning at a positive vector $x(0)=x^0 \in
\R^s_{>0}$ remains in the {\em stoichiometric compatibility class},
which we denote by
\begin{align}\label{eqn:invtPoly}
\invtPoly~:=~(x^0+\St) \cap \mathbb{R}^s_{\geq 0}~, 
\end{align}
for all positive time.  In other words, this set is forward-invariant with
respect to the dynamics~(\ref{eq:ODE}).    

A {\em steady state} of a reaction kinetics system~\eqref{eq:ODE} is a nonnegative concentration vector $x^* \in \Rnn^s$ at which the ODEs~\eqref{eq:ODE}  vanish: $f_{\kappa} (x^*) = 0$.  
A steady state $x^*$ is {\em nondegenerate} if ${\rm Im}\left( df_{\kappa} (x^*) \right) = \St$.  (Here, $df_{\kappa}(x^*)$ is the Jacobian matrix of $f_{\kappa}$ at $x^*$.)  A nondegenerate steady state is exponentially {\em stable} if each of the $\sigma:= \dim \St$ nonzero eigenvalues of $df_{\kappa}(x^*)$, viewed over the complex numbers, has negative real part. 
Also, we distinguish between {\em positive steady states} $x ^* \in \mathbb{R}^s_{> 0}$ and {\em boundary steady
states} $x^* \in  \left( \mathbb{R}^s_{\geq 0} \setminus \mathbb{R}^s_{> 0} \right)$.  
A system is {\em multistationary} (or {\em admits multiple steady states}) if there exists a stoichiometric
compatibility class $\invtPoly$ with two or more positive steady states. 
In the setting of mass-action kinetics, a network may admit multistationarity for all, some, or no choices of
positive rate constants $\kappa_{ij}$; if such rate constants exist, then we say that 
the network itself is {\em has the capacity for multistationarity} or, for short, is {\em multistationary}.  One focus of this work is on the following open question:
\begin{problem}
Which reaction networks are multistationary?
\end{problem}
This question is difficult in general: assessing whether a given network is multistationary means determining if the parametrized family of polynomial systems arising from the mass-action ODEs~\eqref{eq:ODE} ever admits two or more positive solutions.  Section~\ref{sec:mss} will describe results for certain families of networks, and Section~\ref{sec:openQ} provides an outlook on future progress on this question.

\subsection{The deficiency of a \CRN} \label{subsec:deficiency}
The  {\em deficiency} $\delta$ of a reaction network is an important invariant.   
For a reaction network, recall that $p$ denotes the number
of complexes. Also, let $l$ denote 
the number of linkage classes (connected components, as defined earlier). All networks considered in this article
have the property that each linkage class contains a unique {\em terminal strong linkage class}, i.e.~a maximal strongly connected subgraph in which there are no reactions from a complex in the subgraph to a complex outside the subgraph. 
In this case, Feinberg showed that the deficiency of the network can be computed in the following way: 
$$ \delta~:=~p-l-\dim(\St)~,$$
where $\St$ denotes the stoichiometric subspace~\eqref{eq:stoic_subs}.  
Note that in this case the deficiency depends only on the
reaction network and not on the specific values of the rate constants.
The deficiency of a reaction network is nonnegative
because it can be interpreted as the dimension of a
certain linear subspace \cite{Feinberg72}.

\subsection{Networks with flow reactions} \label{subsec:flow}

Here we give some definitions pertaining to flow reactions (which in some settings may be interpreted as production and degradation).
\begin{definition} \label{def:flow}
\begin{enumerate}
	\item 
	A {\em flow reaction} contains only one molecule; such a reaction is either an {\em inflow reaction} $0 \to X_i$ or an {\em outflow reaction} $X_i \to 0$.  A {\em non-flow reaction} is any reaction that is not a flow reaction.
	\item   
  A reaction network is a {\em continuous-flow stirred-tank reactor network} {\em (CFSTR network)} if it contains outflow reactions $X_i \to 0$ for all species $X_i$ of the network.
	\item 
	A reaction network is {\em fully open} if it contains inflow reactions  $0 \to X_i$ and outflow reactions $X_i \to 0$ for all species $X_i$ of the network.  (Thus, a fully open network is a CFSTR.)
  \end{enumerate}
\end{definition}

{\em Notation.}  The subnetwork of a network $G$ consisting of the non-flow reactions of $G$ will be called the {\em non-flow subnetwork} of $G$ and denoted by $G^{\circ}$.  Similarly, any reaction network $G=\{\SS, \CC, \RR \}$ is contained in a fully open network obtained by including all flow reactions; we call this the  {\em fully open extension} of $G$ and denote this CFSTR by $$\widetilde G := \{\SS,~\CC \cup \SS \cup \{0\},~\RR \cup \{X_i \leftrightarrow 0\}_{X_i \in \SS} \}~.$$

\section{Precluding or guaranteeing multiple steady states} \label{sec:mss}
Broadly speaking, results for assessing multistationarity of a given network arise from two areas: deficiency theory and injectivity theory.
This section surveys these results (see Table~\ref{table:review-methods}).  Additionally, many recent results relate the capacity for multistationarity of one network to that of another; these will be described in more depth in Section~\ref{subsec:embed-results}.  We note that, for simplicity, many of the results we describe are not stated here in their strongest or most general form (for instance, some results hold for more general kinetics than mass-action); instead, we refer the interested reader to the original papers.

\begin{table}[h]
\caption{Results for precluding or guaranteeing multistationarity}
\centering
\begin{tabular}{|l|l|l|l|l|}
\hline
                        & {\em Precluding} & {\em Guaranteeing} &   \\ 
                        & {\em multistationarity} & {\em multistationarity} & {\em Both}  \\ \hline
{\em Deficiency theory}       & Theorems~\ref{thm:def-0} and~\ref{thm:def-1}        &                                          & Advanced deficiency and                 \\ 
	~ & ~ & ~ & higher deficiency theories  \\
\hline
{\em Injectivity criterion}      & Theorems~\ref{thm:inj} and~\ref{thm:inj-cfstr}                 & Theorem~\ref{thm:det-opt}            &                           \\ \hline
{\em Using embedded networks} &                        				&           &     Theorem~\ref{thm:embedding-mss}                     \\ \hline
{\em Other approaches}       &  Monotone systems results                                 &                                          &   See Section~\ref{subsec:other-theories} \\ \hline
\end{tabular}
\label{table:review-methods}
\end{table}

\subsection{Deficiency theory} \label{subsec:def-theory}
Chemical reaction network theory, initiated by Feinberg, Horn, and Jackson beginning in the 1970s, aims to analyze reaction networks independently of the choice of rate constants.  One key area of progress is deficiency theory.  Here we review how these results can be used to preclude multistationarity. 
For details, we refer to the review paper of Feinberg~\cite[\S 6]{FeinDefZeroOne}.  The following two results are due to Feinberg.

\begin{theorem}[Deficiency zero theorem] \label{thm:def-0}
Deficiency-zero networks are not multistationary.  Moreover:
\begin{enumerate}
\item Every zero-deficiency network that is weakly reversible admits a unique positive steady state (for any choice of rate constants), and this steady state is locally asymptotically stable.
\item Every zero-deficiency network that is {\em not} weakly reversible admits {\em no} positive steady states (for any choice of rate constants).
\end{enumerate} 
\end{theorem}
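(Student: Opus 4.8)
The plan is to recast the mass-action vector field~\eqref{eq:ODE-mass-action} in the factored form that underlies all of deficiency theory, and then to use the hypothesis $\delta = 0$ to force every positive steady state to be \emph{complex balanced}. Writing $\Psi(x) := (x^{y_1}, \ldots, x^{y_p})^t$ for the vector of complex monomials and $A_\kappa$ for the $p \times p$ matrix with off-diagonal entries $(A_\kappa)_{ij} = \kappa_{ji}$ (the rate of $y_j \to y_i$) and zero column sums, one checks directly---exactly as in the single-reaction computation above---that $f_\kappa(x) = Y^t A_\kappa \Psi(x)$. Since each column $A_\kappa e_j = \sum_{j \to i} \kappa_{ji}(e_i - e_j)$ is a combination of reaction vectors in complex space, $\im(A_\kappa)$ lies inside the span $\mathcal B$ of those vectors; factoring $\Gamma = Y^t B$ through the incidence matrix $B$ whose columns are the $e_j - e_i$, a short rank count gives $\dim \mathcal B = p - l$ and hence $\delta = \dim(\ker Y^t \cap \mathcal B)$.

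This yields the first, essentially combinatorial, step: at any steady state $x^*$ we have $A_\kappa \Psi(x^*) \in \ker Y^t \cap \im(A_\kappa) \subseteq \ker Y^t \cap \mathcal B$, which $\delta = 0$ collapses to $\{0\}$. Thus $A_\kappa \Psi(x^*) = 0$, so \emph{every} steady state is complex balanced, for every choice of rate constants. The remaining work reads off the two conclusions from the structure of $\ker(A_\kappa)$. Because $A_\kappa$ is the negative transpose of a weighted graph Laplacian, standard Perron--Frobenius / Matrix-Tree reasoning shows that every vector of $\ker(A_\kappa)$ is supported on the terminal strong linkage classes, and that in the weakly reversible case there is a strictly positive kernel vector with $\dim \ker(A_\kappa) = l$.

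For part (2), suppose the network is not weakly reversible. Then some complex $y_i$ lies outside every terminal strong linkage class, so $\nu_i = 0$ for all $\nu \in \ker(A_\kappa)$; a positive steady state would force $\Psi(x^*)_i = (x^*)^{y_i} > 0$, contradicting $\Psi(x^*) \in \ker(A_\kappa)$, so no positive steady state exists. For part (1), weak reversibility supplies the strictly positive kernel vector, and I would establish existence and uniqueness of a positive complex-balanced steady state in each stoichiometric compatibility class by a Birch-type (toric) argument: after taking logarithms the complex-balancing equations are linear, and the complex-balanced locus meets each positive class in exactly one point. Since $\delta = 0$ already forced every positive steady state to be complex balanced, this is precisely the asserted unique positive steady state. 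Local asymptotic stability then follows from the Horn--Jackson Lyapunov function $V(x) = \sum_{i=1}^s \big( x_i \ln(x_i/x_i^*) - x_i + x_i^* \big)$, which is strictly convex on the class and satisfies $\dot V \le 0$ with equality only at $x^*$.

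Combining the two parts gives the headline statement that deficiency-zero networks are never multistationary: each positive compatibility class contains either a unique positive steady state (weakly reversible case) or none (otherwise), so two distinct positive steady states can never coexist in a single class. The linear-algebraic reduction to complex balancing is routine once the factorization $f_\kappa = Y^t A_\kappa \Psi$ and the identity $\delta = \dim(\ker Y^t \cap \mathcal B)$ are in hand; I expect the genuine difficulty to lie in the two analytic ingredients of part (1)---producing a positive complex-balanced steady state and verifying the sign and equality conditions for $\dot V$---which is exactly where the Horn--Jackson theory does its heavy lifting.
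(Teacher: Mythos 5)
The paper itself contains no proof of this theorem: it is quoted as a classical result of Feinberg, with a pointer to the review \cite{FeinDefZeroOne}, so there is no internal argument to compare yours against. Your proposal reconstructs the standard Feinberg--Horn--Jackson proof, and its skeleton is correct: the factorization $f_\kappa = Y^t A_\kappa \Psi$, the identity $\delta = \dim(\ker Y^t \cap \mathcal{B})$ with $\mathcal{B}$ the image of the incidence matrix, the deduction that $\delta = 0$ forces every positive steady state to be complex balanced, the Matrix--Tree description of $\ker(A_\kappa)$ (kernel vectors supported on the terminal strong linkage classes), which settles part (2) exactly as you argue, and the Horn--Jackson toric/Lyapunov machinery for part (1). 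Combining the two parts to rule out multistationarity is also correct.

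One structural point is worth making precise, because your outline under-specifies it: in part (1), producing a positive complex-balanced steady state is \emph{not} a consequence of weak reversibility plus a Birch-type intersection argument alone --- deficiency zero must be invoked a second time. Weak reversibility gives a strictly positive $\nu \in \ker(A_\kappa)$, and the positive part of $\ker(A_\kappa)$ is the log-affine set $\left\{ \ln \nu + \textstyle\sum_{\theta} (\ln c_\theta) \mathbf{1}_{\theta} : c_\theta > 0 \right\}$, where $\mathbf{1}_{\theta}$ is the indicator vector of the $\theta$-th linkage class. Solving $Y \ln x^* \in \ln \nu + \operatorname{span}\{\mathbf{1}_{\theta}\}$ for $x^*$ therefore requires $\operatorname{im}(Y) + \operatorname{span}\{\mathbf{1}_{\theta}\} = \mathbb{R}^p$, which is precisely the orthogonal-complement reformulation of $\ker Y^t \cap \mathcal{B} = \{0\}$, i.e.\ of $\delta = 0$; without it the complex-balanced locus can be empty and Birch's theorem has nothing to intersect with the compatibility classes. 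Relatedly, your phrase ``after taking logarithms the complex-balancing equations are linear'' is only literally true in this weakly reversible, positive-kernel sense; the statement that the full set of positive complex-balanced equilibria equals $\{x > 0 : \ln x - \ln x^* \in \St^{\perp}\}$ is itself the Horn--Jackson theorem, proved by a convexity argument. You do flag these as the deferred ``heavy lifting,'' which is a legitimate way to cite the literature --- the survey does the same by citing Feinberg --- but a self-contained proof would need to supply the second use of $\delta = 0$ explicitly.
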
 

\begin{theorem}[Deficiency one theorem] \label{thm:def-1}
Consider a reaction network $G$ with linkage classes $G_1$, $G_2$, \dots, $G_l$.  Let $\delta$ denote the deficiency of $G$, and let $\delta_i$ denote the deficiency of $G_i$.  Assume that:
\begin{enumerate}
\item each linkage class $G_i$ has only one terminal strong linkage class,
\item $\delta_i \leq 1$ for all $i=1,2,\dots,l$, and
\item $\sum\limits_{i=1}^l \delta_i = \delta.$
\end{enumerate}
Then $G$ is not multistationary.
\end{theorem}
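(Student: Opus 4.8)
The plan is to work in the ``complex space'' formalism and to use the two standing hypotheses the way Feinberg's original argument does: condition (3) to decouple the analysis across linkage classes, and condition (1) to control the kernel of the weighted graph Laplacian on each class. By definition, it suffices to show that each stoichiometric compatibility class contains at most one positive steady state. Throughout, $\theta$ indexes the linkage classes $G_1,\dots,G_l$, and $\delta_{\theta}$, $\St_{\theta}$ denote the deficiency and stoichiometric subspace of the $\theta$-th class.

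First I would rewrite the mass-action system in factored form. Let $Y^{t}\colon \R^{p}\to\R^{s}$ be the linear map sending the standard basis vector $\omega_{i}$ (indexed by the complex $y_{i}$) to $y_{i}$, i.e.\ the transpose of the matrix $Y$ of the excerpt; let $\partial\colon \R^{r}\to\R^{p}$ be the incidence map sending a reaction $y_{i}\to y_{j}$ to $\omega_{j}-\omega_{i}$; and let $A_{\kappa}\colon\R^{p}\to\R^{p}$ be the weighted Laplacian with $A_{\kappa}\,\omega_{i}=\sum_{y_{i}\to y_{j}}\kappa_{ij}(\omega_{j}-\omega_{i})$. Writing $\Psi(x):=(x^{y_{1}},\dots,x^{y_{p}})$ for the vector of complex monomials, a direct computation gives
\begin{align*}
f_{\kappa}(x)~=~Y^{t}\,A_{\kappa}\,\Psi(x)~.
\end{align*}
Two facts organize everything. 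First, $\St=Y^{t}(\im\partial)$ while $\dim(\im\partial)=p-l$, so the deficiency acquires the geometric meaning $\delta=\dim(\ker Y^{t}\cap\im\partial)$; applied to a single linkage class this reads $\delta_{\theta}=\dim(\ker Y^{t}\cap\im\partial_{\theta})$. Second, $\im A_{\kappa}\subseteq\im\partial$, so at every steady state $x$ the vector $A_{\kappa}\Psi(x)$ lies in $\ker Y^{t}\cap\im\partial$.

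Next I would invoke condition (3). Because $\im\partial=\bigoplus_{\theta}\im\partial_{\theta}$ (distinct linkage classes occupy disjoint complex coordinates) and because $\sum_{\theta}\dim\St_{\theta}=\dim\St$ is exactly the statement that the subspaces $\St_{\theta}=Y^{t}(\im\partial_{\theta})$ are independent, the intersection splits as $\ker Y^{t}\cap\im\partial=\bigoplus_{\theta}(\ker Y^{t}\cap\im\partial_{\theta})$, a direct sum of spaces of dimension $\delta_{\theta}\le 1$. Hence at a steady state $x$ we may write $A_{\kappa}\Psi(x)=\sum_{\theta}\beta_{\theta}(x)\,g_{\theta}$, where $g_{\theta}$ spans the (at most one-dimensional) summand and $\beta_{\theta}(x)\in\R$ is a single scalar of ``slack'' per class; every deficiency-zero class has $\beta_{\theta}\equiv 0$ and is therefore complex balanced. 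Condition (1) enters separately through the Matrix--Tree theorem: a linkage class with a \emph{unique} terminal strong linkage class has $\ker(A_{\kappa}|_{G_{\theta}})$ one-dimensional, spanned by a vector $\lambda^{\theta}$ that is strictly positive on the terminal strong linkage class and zero elsewhere.

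With this structure, the endgame is a uniqueness argument. Given two positive steady states $a,b$ in one stoichiometric compatibility class, set $\mu\in\R^{s}$ with $\mu_{k}:=\ln a_{k}-\ln b_{k}$, so that $\Psi(a)_{i}=\Psi(b)_{i}\,e^{\langle y_{i},\mu\rangle}$ and $a-b\in\St$; the goal is $\mu=0$. When every $\delta_{\theta}=0$ the system is complex balanced and the classical Horn--Jackson convexity argument underlying the Deficiency Zero Theorem (Theorem~\ref{thm:def-0}) forces $\mu=0$. The real content is the deficiency-one classes, where the nonzero slack $\beta_{\theta}$ contributes one extra degree of freedom in complex space that the complex-balanced argument cannot absorb. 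The hard part is to show that this single scalar, together with the positivity of $\lambda^{\theta}$ and the unique-terminal-class structure, still pins down $\mu$: one tracks the sign of $\langle y_{j}-y_{i},\mu\rangle$ across the reactions of each linkage class and argues that a nonzero $\mu$ would require these signs to be simultaneously consistent around the terminal strong linkage class yet incompatible with the slack being a single scalar. This sign and monotonicity bookkeeping---separating terminal from non-terminal complexes and ruling out the one-parameter slack as a source of a second steady state---is the crux of the theorem and the step I expect to be the main obstacle; the reductions above are, by comparison, linear algebra and graph theory.
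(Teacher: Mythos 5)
A preliminary remark on the comparison itself: the paper contains no proof of this statement. It is Feinberg's Deficiency One Theorem, quoted in this survey with a pointer to \cite{FeinDefZeroOne}, so your attempt can only be measured against the known proof of Feinberg, not against an in-paper argument. Within that frame, your setup is correct and is indeed the formalism Feinberg's proof uses: writing $f_{\kappa}=Y^{t}A_{\kappa}\Psi$, interpreting the deficiency as $\delta=\dim(\ker Y^{t}\cap\im\partial)$, using hypothesis (3) to split $\ker Y^{t}\cap\im\partial$ into a direct sum over linkage classes of spaces of dimension $\delta_{\theta}\le 1$ (your dimension count, via $\sum_{\theta}\dim S_{\theta}=\dim S$ forcing independence of the $S_{\theta}$, is right), and invoking the Matrix--Tree theorem so that hypothesis (1) controls $\ker A_{\kappa}$ on each class. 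The observation that at a steady state $A_{\kappa}\Psi(x)$ lies in $\ker Y^{t}\cap\im\partial$, hence decomposes into one scalar of ``slack'' per deficiency-one class, is also accurate.

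The genuine gap is the step you explicitly defer: showing that this one scalar per deficiency-one class, together with the terminal-class structure, forces $\mu=\ln a-\ln b$ to vanish for any two positive steady states $a,b$ in one stoichiometric compatibility class. You describe this as ``sign and monotonicity bookkeeping'' and acknowledge it as the main obstacle, but that step is not bookkeeping --- it is the theorem, and it constitutes essentially the whole of Feinberg's proof. What is required there is qualitatively different from the linear algebra you have done: one must partition the complexes of each linkage class according to the values $\langle y_{i},\mu\rangle$, derive from the unique-terminal-strong-linkage-class hypothesis a system of sign conditions (equalities attached to terminal complexes, strict inequalities attached to non-terminal ones), and then run a combinatorial argument over the reaction graph showing that, when $a-b\in S$ and both $a$ and $b$ satisfy the steady-state equations, these sign patterns cannot be realized by any nonzero $\mu$. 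Nothing in your outline indicates how that incompatibility would actually be derived; in particular, the vague claim that a nonzero $\mu$ would be ``incompatible with the slack being a single scalar'' is exactly the assertion that needs proof, and no short argument for it is known. As written, the proposal establishes the (standard, correct) reductions but does not prove the statement.
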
 

Additionally, Ellison, Feinberg, and Ji developed advanced deficiency and higher deficiency theories (including the deficiency one and advanced deficiency algorithms) which in many cases can affirm that a given network admits multiple steady states or can rule out the possibility \cite{EllisonThesis,Fein95DefOne, JiThesis}.  All of these deficiency-related results have been implemented in {\tt CRN Toolbox}, freely available computer software developed by Feinberg, Ellison, Knight, and Ji~\cite{Toolbox}.  For readers who have a particular network of interest of small to moderate size, we recommend exploring what the Toolbox can say about your network.

\subsection{The injectivity criterion} \label{subsec:inj-theory}
In~\cite{ME1}, Craciun and Feinberg introduced a criterion that guarantees that a reaction network does not admit multiple seady states.  This test, which subsumed an earlier criterion of Schlosser and Feinberg~\cite{SchlosserFeinberg},
 is applicable even when the deficiency zero and deficiency one theorems are not, so this approach is complementary to the results from deficiency theory.  

The criterion arises from the following basic observation: letting $f_{\kappa}(x)$ denote the mass-action ODEs~\eqref{eq:ODE-mass-action} arising from a network and rate constants $\kappa = (\kappa_{ij})$, a sufficient condition to rule out multiple steady states is if the restricted map $f_{\kappa}|_{\invtPoly} : \invtPoly \to \St$ is injective for all $\kappa$ and all stoichiometric compatibility classes $\invtPoly$.  Such a network is said to be {\em injective} or to pass the {\em injectivity criterion} (also called the Jacobian criterion).  

The next result summarizes the contributions of many works that extended and gave equivalent formulations of the injectivity criterion.  We let $\sigma(x) \in \{+,-,0\}^n$ denote the sign vector of $x \in \mathbb{R}^n$.  Also, for an $m \times n$ matrix $M$ and nonempty subsets $I \subseteq \{1,2,\dots, m\}$ and $J \subseteq \{1,2,\dots, n\}$, we let $M_{I,J}$ denote the submatrix of $M$ formed by the rows indexed by $I$ and columns indexed by $J$. 
\begin{theorem}[Injectivity criterion~\cite{BP,ME1,ME_entrapped,ME2,ME3,JiThesis,ShinarFeinberg2012,WiufFeliu_powerlaw,signs}] \label{thm:inj} Let $G$ be a reaction network with stoichiometric matrix $\Gamma \in \mathbb{Z}^{s \times r}$, reactant matrix $M \in \mathbb{Z}_{\geq 0}^{r \times s}$, and stoichiometric subspace $\St = {\rm im}(\Gamma)$.  Let $f_{\kappa}(x):=\Gamma \cdot R(x)$ denote the mass-action ODEs~(\ref{eq:ODE-mass-action}) arising from $G$ and rate constants $\kappa=(\kappa_{ij}) \in \mathbb{R}_+^{r}$.  Then, the following are equivalent:
\begin{itemize} 
\item For all $\kappa \in \mathbb{R}^r_+$ and all stoichiometric compatibility classes $\invtPoly$, the map $f_{\kappa}|_{\invtPoly} : \invtPoly \to \St$ is injective.
\item For all $\kappa \in \mathbb{R}^r_+$ and all $x \in \mathbb{R}^s_+$, the Jacobian matrix of $f_{\kappa}(x)$ with respect to $x$
is injective on $\St$.
\item For all subsets $I \subseteq \{1,2,\dots, s\}$ and $J \subseteq \{1,2,\dots, r\}$ of size equal to the rank of $\Gamma$, the product ${\rm det}(\Gamma_{I,J})\cdot {\rm det}(M_{J,I})$ either is zero or has the same sign as all other nonzero such products, and moreover at least one such product is nonzero.
\item The sets $\ker(\Gamma)$ and $M(\sigma^{-1}(\sigma(\St)))$ have no nonzero sign vector in common.
\end{itemize}
If these equivalent conditions hold, then $G$ is not multistationary.
\end{theorem}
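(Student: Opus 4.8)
The plan is to prove the equivalences via the cycle (1) $\Rightarrow$ (2) $\Rightarrow$ (3) $\Rightarrow$ (1) together with (3) $\Leftrightarrow$ (4), and to observe that the concluding assertion is then immediate: if $G$ were multistationary there would be distinct positive steady states $x^*, \hat x^*$ in one stoichiometric compatibility class $\invtPoly$, so that $f_{\kappa}(x^*) = f_{\kappa}(\hat x^*) = 0$ with $x^* - \hat x^* \in \St \setminus \{0\}$, contradicting the injectivity of $f_{\kappa}|_{\invtPoly}$ asserted in the first bullet. The computational backbone throughout is the explicit Jacobian: since $R_k(x) = \kappa_{ij}x^{y_i}$ for the reaction indexed by $k$, differentiating on $\R^s_{>0}$ gives $df_{\kappa}(x) = \Gamma\,\operatorname{diag}(R_k(x))\,M\,\operatorname{diag}(1/x_1,\dots,1/x_s)$, and as $\kappa$ ranges over $\R^r_{>0}$ at fixed $x$ the diagonal factor $\operatorname{diag}(R_k(x))$ ranges over all positive diagonal matrices.

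For (1) $\Rightarrow$ (2) I would argue by contraposition: if $df_{\kappa}(x)$ has a nonzero kernel vector lying in $\St$ at some positive $x$, then the inverse function theorem fails for $f_{\kappa}$ restricted to the convex set $(x+\St)\cap\R^s_{>0}$, and a short local argument produces two nearby points of a single compatibility class with equal image, violating (1). For (3) $\Rightarrow$ (1) the key is that condition (3) forces the determinant of an augmented Jacobian — the matrix obtained by adjoining to $df_{\kappa}(x)$ a fixed basis of $\St^{\perp}$ — to have a constant nonzero sign over all positive $\kappa, x$; a global univalence theorem of Gale--Nikaido type (or a degree-theoretic argument) then shows that a $C^1$ map on a convex domain into a space of equal dimension whose Jacobian determinant has constant nonzero sign is injective, which applied to $f_{\kappa}|_{\invtPoly}$ yields (1).

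The step (2) $\Rightarrow$ (3) is the algebraic core. Injectivity of $df_{\kappa}(x)$ on $\St$ for all positive $\kappa, x$ says that the augmented-Jacobian determinant never vanishes; since it is continuous on the connected set $\R^r_{>0}\times\R^s_{>0}$, it has constant sign. Expanding this determinant with the Cauchy--Binet formula produces a sum whose terms are the products $\det(\Gamma_{I,J})\,\det(M_{J,I})$ over index sets with $|I| = |J| = \operatorname{rank}(\Gamma)$, each multiplied by a strictly positive monomial in $\kappa$ and $x$:
\[
\det\bigl(\text{augmented Jacobian}\bigr) \;=\; \sum_{|I|=|J|=\operatorname{rank}(\Gamma)} \det(\Gamma_{I,J})\,\det(M_{J,I})\,\bigl(\text{positive monomial}\bigr).
\]
A sign-definite, not-identically-zero polynomial of this shape forces every nonzero coefficient to share the common sign, with at least one nonzero — precisely condition (3). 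The point demanding care is the monomial bookkeeping: one must check that the positive monomials attached to distinct index pairs do not conspire to cancel, so that constant sign of the whole sum genuinely transfers to the individual products.

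Finally, (3) $\Leftrightarrow$ (4) is the sign-vector reformulation, and I expect it to be the main obstacle. It recasts ``all maximal products $\det(\Gamma_{I,J})\det(M_{J,I})$ share a sign'' as ``$\ker(\Gamma)$ and $M(\sigma^{-1}(\sigma(\St)))$ have no nonzero sign vector in common.'' The bridge is an oriented-matroid duality: a common nonzero sign vector of the two sets encodes a choice of supports witnessing two maximal products of opposite sign (or a degeneracy destroying sign-definiteness), and conversely a violation of the sign condition can be promoted to such a common sign vector. Making this rigorous requires the combinatorial machinery relating the sign vectors of a subspace, those of its orthogonal complement, and the images of the associated linear maps — precisely the content of the oriented-matroid results in the cited works — and this is where I would invoke those results rather than reconstruct the duality from scratch.
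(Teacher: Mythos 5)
The first thing to note is that the paper contains no proof of Theorem~\ref{thm:inj} at all: it is a survey statement assembled from the cited works, so your proposal can only be compared against the standard arguments in that literature (Craciun--Feinberg's determinant/mean-value approach and the sign-vector formulation of the paper cited as ``signs''). Within your proposal, the Cauchy--Binet step relating the Jacobian condition to condition (3) is essentially correct --- including your flagged worry about cancellation, which resolves exactly as you hope: distinct pairs $(I,J)$ carry distinct monomials (the factor $\prod_{j\in J}\kappa_j$ separates different $J$, and for fixed $J$ the factor $\prod_{i\in I}x_i^{-1}$ separates different $I$), so sign-definiteness of the sum forces all coefficients $\det(\Gamma_{I,J})\det(M_{J,I})$ to share a sign. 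The deduction of non-multistationarity from bullet~1 is also fine. The two arrows of your cycle that touch bullet~1, however, both rest on false general principles, and this is a genuine gap.

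For $(1)\Rightarrow(2)$ you argue by contraposition that a nonzero vector of $\ker(df_\kappa(x^*))\cap\St$ makes the inverse function theorem ``fail'' and hence produces two nearby points of one compatibility class with equal image. A singular derivative does not imply local non-injectivity: $t\mapsto t^3$ is injective on $\mathbb{R}$ with vanishing derivative at $0$. The equivalence of bullets 1 and 2 is not a pointwise fact about a single map $f_\kappa$; it is a statement about the whole family as $\kappa$ ranges over $\mathbb{R}^r_+$, and every known proof exploits that quantifier. Concretely, writing $df_\kappa(x)=\Gamma\,\mathrm{diag}(R(x))\,M\,\mathrm{diag}(1/x)$ as you do, a kernel vector $v\in\St$ yields $w=\mathrm{diag}(1/x)v$ with $\sigma(w)=\sigma(v)\in\sigma(\St)$ and $\mathrm{diag}(R(x))Mw\in\ker\Gamma$, i.e.\ a sign vector common to $\ker\Gamma$ and $M(\sigma^{-1}(\sigma(\St)))$; from such a sign vector one then reconstructs a possibly \emph{different} $\kappa$ and two distinct points violating bullet~1. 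Both directions between bullets 1 and 2 pass through condition (4).

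For $(3)\Rightarrow(1)$ you invoke ``a global univalence theorem of Gale--Nikaido type (or a degree-theoretic argument)'': that a $C^1$ map on a convex domain into a space of equal dimension whose Jacobian determinant has constant nonzero sign is injective. No such theorem exists: $(u,v)\mapsto(e^u\cos v,\,e^u\sin v)$ on $\mathbb{R}^2$ has Jacobian determinant $e^{2u}>0$ and is not injective; Gale--Nikaido requires a P-matrix hypothesis on a rectangle, which condition (3) does not supply; and even for polynomial maps the claimed principle is precisely the real Jacobian conjecture, which is false by Pinchuk's counterexample. A degree argument would additionally require properness of $f_\kappa|_{\invtPoly}$, which you do not establish. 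The route that actually works in the cited literature is special to monomial maps: for positive $x,y$ one has $\sigma\bigl(x^{y_i}-y^{y_i}\bigr)=\sigma\bigl((M(\ln x-\ln y))_i\bigr)$ and $\sigma(\ln x-\ln y)=\sigma(x-y)$, so a failure of injectivity in bullet~1 for \emph{some} $\kappa$ produces a common nonzero sign vector as in (4), and conversely; equivalently, Craciun--Feinberg's mean-value trick realizes the difference $f_\kappa(x)-f_\kappa(y)$ as a Jacobian of the same family evaluated at another point. In short, the correct architecture pivots all four conditions through (4) (or through this family-specific identity), rather than running your cycle $(1)\Rightarrow(2)\Rightarrow(3)\Rightarrow(1)$; as written, both the entry into and the exit from bullet~1 in your cycle are unproved.
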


\begin{remark}
The original injectivity criterion of Craciun and Feinberg was for the case of CFSTRs (each chemical species has an associated outflow reaction)~\cite{ME1}.
Another proof of their result appeared in the context of geometric modeling~\cite{CGS}, 
and extended criteria were subsequently achieved for arbitrary networks~\cite{gnacadja_linalg,FeliuWiuf_MAK,JiThesis}.
For injectivity criteria for more general kinetics than mass-action, see the references listed in~\cite[Remark 3.5]{signs}.
\end{remark}

\begin{remark}
In some cases, the injectivity criterion can be translated to easy-to-check conditions on certain graphs arising from the network, namely, the species-reaction graph or the interaction graph~\cite{ME2,MinchevaCraciun2008,BanajiCraciun2010,HeltonDeterminant,ShinarFeinberg2013}.   We refer the interested reader to the review chapter of Craciun, Pantea, and Sontag~\cite{CPS}. 

Additionally, Ivanova obtained a result for precluding multistationarity that is based on the directed species-reaction graph \cite{ivanova}.  More precisely, the graph yields a system of inequalities involving the rate constants that, if consistent, guarantees that the mass-action system avoids multiple steady states.  For some networks, this inequality system is satisfied for all choices of rate constants, and thus multistationarity is precluded. For an example of such a network and an overview of these results, see~\cite[\S 5.4]{kinetic-book}.
\end{remark}

\begin{remark}
Determining whether a given network is injective (and thus precludes multistationarity) can be accomplished easily with the online software tool {\tt CoNtRol}~\cite{control} (which gives much more information as well).  We encourage the reader to give it a try.  
\end{remark}

In their original setting of CFSTRs, Craciun and Feinberg extended ideas underlying Theorem~\ref{thm:inj} to give a criterion that can guarantee multiple steady states; we will postpone stating this result (namely, Theorem~\ref{thm:det-opt}) until after introducing ``embedded networks'' in Section~\ref{sec:emb}.  A related result for guaranteeing multistationarity for non-injective networks appears in~\cite[\S 3.2]{signs}.


\subsection{Other approaches for assessing multistationarity} \label{subsec:other-theories}
Besides deficiency theory and injectivity theory, there are other approaches for determining whether a network is multistationary.  We mention several here, and then in Section~\ref{sec:emb} describe another approach: relating the capacity for multiple steady states of two or more similar networks.
First, monotone systems theory can be used to preclude multistationarity (for instance, see \cite{ADS10,banaji,BanajiM,BP,DB}).  In this context, if a dynamical system (such as a chemical reaction system) preserves some partial order, then existence and uniqueness of a positive steady state can be guaranteed.  For graph-theoretic criteria for monotonicity, see~\cite{CPS}.
%
Additionally, Conradi and Mincheva used degree theory to obtain a graph-theoretic condition for ruling out multiple steady states which is applicable for systems for which trajectories are bounded~\cite{ConradiMincheva14}.
Finally, Schlosser and Feinberg gave a condition guaranteeing multiple steady states for certain networks in which each linkage class consists of a pair of reversible reactions~\cite[Theorem 5.1]{SchlosserFeinberg}.
%



\section{Precluding/guaranteeing multistationarity by using embedded networks} \label{sec:emb}
In this section, we recall the definition of ``embedding'' and explain how this notion is useful for assessing multistationarity.
Embedded networks were introduced by the authors in~\cite{simplifying}, generalizing Craciun and Feinberg's notion of network {\em projections}~\cite[\S 8]{ME3}.  The embedding relation generalizes the subnetwork relation --  a subnetwork $N$ is obtained from a reaction network $G$ by removing a subset of reactions (alternatively, setting some of the reaction rates to be 0), while an embedded network is obtained by removing a subset of reactions or a subset of species (alternatively, setting the stoichiometric coefficients of those species to be 0) or both. 
For instance, removing the species $B$ from the reaction $A+B \to A+C$ results in the reaction $A \to A+C$. 

In some cases, removing species results in a {\em trivial reaction} -- where the source and product complex are identical. For instance, removal of  both $B$ and $C$ from $A+B \to A+C$ results in the trivial reaction $A \to A$. So, after removing species, any trivial reactions and any copies of duplicate reactions are discarded. 
\begin{definition} Given a set of reactions $\RR$ and a set of species $\SS$, we define the {\em restriction of $\RR$ to $\SS$}, denoted $\RR |_{\SS}$, to be the set of reactions obtained from $\RR$ after implementing the following steps: (1) set the stoichiometric coefficients of species that are 
not in $\SS$ equal to 0, and then (2) discard any {\em trivial reactions}, that is, reactions of the form $C_i \to C_i$, where the source and product complexes are identical.  
\end{definition}

As for removing both species and reactions, consider a network $G = (\SS,\CC,\RR)$ and 
a subset of reactions $\{y \to y'\} \subseteq \RR$ and a subset of species $\{X_i\} \subseteq \SS$ that are to be removed. Then the reactions in the resulting embedded network  $N$ 
will be denoted by $\RR_N = \left(\RR \setminus  \{ y \to y' \}\right)|_{\SS \setminus \{X_i\}}$ in Definition~\ref{def:emb} -- operationally, we obtain $N$ by first removing all the reactions $\{ y \to y' \}$ from $G$, and then removing the species $\{X_i\}$ from $G$, and finally restricting the set of reactions to the remaining species. 

\begin{definition} \label{def:emb}
Let $G = (\SS,\CC,\RR)$. The {\em embedded network} $N$ of $G$ obtained by removing the set of reactions $\{ y \to y' \} \subseteq \RR$ and the set of species  $\{X_i\} \subseteq \SS$ is 
\[
N = \left(\SS|_{\CC|_{\RR_N}}, \CC|_{\RR_N}, \ \RR_N := \left(\RR \setminus  \{ y \to y' \}\right)|_{\SS \setminus \{X_i\}}\right)~.
\]
\end{definition}
Note that the subset of reactions that remain in the embedded network $N$ is not necessarily a complement of $\{ y \to y' \}|_{\SS \setminus \{X_i\}}$, because some reactions may be removed because they are either trivial reactions or duplicate reactions. Similarly, the subset of species that are in the embedded network need not be a complement of the subset of the removed species $\{X_i\}$. 
The following result establishes some basic properties about embedded networks; the proof appears in the Appendix.

\begin{proposition} \label{prop:emb-properties}
Consider networks $N$ and $G$, with stoichiometric subspaces $S_N$ and $S_G$ and deficiencies $\delta_N$ and $\delta_G$, respectively. 
\been
\item If $N$ is an embedded network of $G$, then $\dim(S_N) \le \dim(S_G)$. 
\item If $N$ is a subnetwork of $G$, and each linkage class of $N$ and each linkage class of $G$ contains a unique terminal strong linkage class, then $\delta_N \le \delta_G$. 
\enen
\end{proposition}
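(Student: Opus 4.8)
The plan is to treat the two parts separately, in each case exhibiting the relevant quantity as a dimension attached to a fixed linear map, so that passing to $N$ only shrinks a spanning set or an ambient subspace. Both parts then reduce to two trivial facts: a linear map cannot increase dimension, and a subspace of a subspace cannot have larger dimension.

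For part (1), I would use that $N$ is obtained from $G$ by deleting some reactions and some species; let $\{X_i\}$ be the deleted species. Deleting those species is precisely the coordinate projection $\pi \colon \R^{\SS} \to \R^{\SS \setminus \{X_i\}}$ that zeroes out the coordinates indexed by the removed species. By Definition~\ref{def:emb}, every reaction vector of $N$ is of the form $\pi(y_j - y_i)$ for some surviving reaction $y_i \to y_j$ of $G$; the discarded trivial and duplicate reactions only remove vectors from this list and never add new ones. Hence the generators of $S_N$ all lie in $\pi(S_G)$, giving $S_N \subseteq \pi(S_G)$, and since $\pi$ is linear, $\dim(S_N) \le \dim \pi(S_G) \le \dim(S_G)$.

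For part (2), the key difficulty is that none of the three terms in $\delta = p - l - \dim(S)$ is individually well-behaved: deleting reactions lowers $\dim(S)$ but can also drop complexes and change the number of linkage classes in either direction, so term-by-term bookkeeping does not give monotonicity. The way I would get around this is to reinterpret the deficiency as a single dimension. Writing $Y \colon \R^{\CC} \to \R^{\SS}$ for the linear map sending each complex to its vector of stoichiometric coefficients, and $\Delta \subseteq \R^{\CC}$ for the span of the vectors $\{\omega_{y_j} - \omega_{y_i} : y_i \to y_j \in \RR\}$ (where $\omega_y$ denotes the standard basis vector indexed by the complex $y$), one has $\dim \Delta = p - l$ from the rank of the incidence matrix of the reaction graph, together with $\St = Y(\Delta)$. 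Rank--nullity then yields $\delta = p - l - \dim(\St) = \dim(\ker Y \cap \Delta)$, valid for both $N$ and $G$ by the terminal-strong-linkage-class hypothesis invoked in Section~\ref{subsec:deficiency}. With this reformulation the inequality is immediate: since $N$ is a subnetwork, $\CC_N \subseteq \CC_G$, and identifying $\R^{\CC_N}$ with the corresponding coordinate subspace of $\R^{\CC_G}$ gives $\Delta_N \subseteq \Delta_G$ (its generators are a subset of those of $\Delta_G$), while $Y_N$ is the restriction of $Y_G$, so $\ker Y_N = \ker Y_G \cap \R^{\CC_N}$. Therefore $\Delta_N \cap \ker Y_N \subseteq \Delta_G \cap \ker Y_G$, and taking dimensions yields $\delta_N \le \delta_G$.

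I expect the main obstacle to be conceptual rather than computational, and to lie entirely in part (2): recognizing that elementary tracking of $p$, $l$, and $\dim(\St)$ does not by itself deliver the bound, and that the deficiency should instead be packaged as $\dim(\ker Y \cap \Delta)$, a quantity that is manifestly monotone under restriction to a subnetwork. Once that reformulation is established, the remaining verifications ($\Delta_N \subseteq \Delta_G$ and $\ker Y_N = \ker Y_G \cap \R^{\CC_N}$) are routine, and part (1) is straightforward by the projection argument above.
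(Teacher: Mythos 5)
Your proposal is correct. Part (1) coincides with the paper's argument: the paper proves a lemma stating that passing to a subnetwork gives a subspace of $S_G$ and that removing species projects $S_G$, and then composes the two, exactly as your projection-of-a-spanning-set argument does. Part (2), however, takes a genuinely different route. The paper reduces to removing a single reaction $y \to y'$ and then runs a case analysis on how many complexes disappear ($p_N = p_G-2$, $p_G-1$, or $p_G$), tracking in each case what happens to the number of linkage classes and to $\dim(S)$ — in the delicate subcase where no linkage class splits, it argues that a remaining path from $y$ to $y'$ forces $y'-y$ to stay in the span, so $\dim(S_N)=\dim(S_G)$. You instead package the deficiency as the single quantity $\dim(\ker Y \cap \Delta)$, where $\Delta$ is the span of the incidence vectors $\omega_{y'}-\omega_{y}$; this identity (via $\dim\Delta = p-l$, $\St = Y(\Delta)$, and rank--nullity) makes monotonicity under reaction removal immediate, since $\Delta_N \subseteq \Delta_G$ and $\ker Y_N = \ker Y_G \cap \mathbb{R}^{\CC_N}$. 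Your route is shorter, handles the removal of an arbitrary set of reactions in one step, and requires no combinatorial bookkeeping; the paper's route, while more laborious, yields finer information along the way, namely that removing a single reaction changes the deficiency by at most one, $\delta_N \in \{\delta_G, \delta_G - 1\}$. One small remark: the identity $p - l - \dim(\St) = \dim(\ker Y \cap \Delta)$ is unconditional, so the terminal-strong-linkage-class hypothesis enters your argument (as it implicitly does the paper's) only to guarantee that this quantity is the network's deficiency in the sense the paper defines it.
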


\subsection{Existing results for assessing multistationarity of one network from another} \label{subsec:embed-results}
In Section~\ref{sec:mss}, we reviewed results for directly assessing multistationarity of a given network.  In this section, we recall what is known about how the capacity for multistationarity of a given network $G$ is related to that of one of its embedded networks $N$.  Most of these results ``lift'' steady states of $N$ to $G$, so in these settings\footnote{In general, it may be impossible to lift multiple steady states from an embedded network $N$ to $G$~\cite[Example 4.5]{joshi2012atoms}.}, if $N$ is multistationary then $G$ is too, or, equivalently, if $G$ is not multistationary then neither is $N$.  Therefore these results can be used to preclude or to guarantee multistationarity. We summarize what is known in this setting in Theorem~\ref{thm:embedding-mss}, of which parts 1 and 3 are due to the authors of the present work~\cite{joshi2012atoms}, part 2 is due to Craciun and Feinberg~\cite[Theorem~2]{ME_entrapped}, 
and part 4 is due to Feliu and Wiuf~\cite{FeliuWiuf}.
\begin{definition}
The {\em induced} network obtained by removing complexes $\{ \CC_1, \dots, \CC_k\}$ is obtained by removing those complexes and then replacing any pairs of reactions of the form $\CC' \to \CC_i \to \CC''$ with a reaction $\CC' \to \CC''$ (and removing duplicate reactions, as necessary).   Also, an {\em intermediate} complex has the form $X_i$, where $X_i$ is a species with total molecularity 1 (so it appears with stoichiometry 1 in the intermediate complex, and appears in no other complexes).
\end{definition}
\begin{theorem} \label{thm:embedding-mss}
Let $N$ and $G$ be reaction networks that are related in at least one of the following ways:
	\begin{enumerate}
	\item $N$ is a subnetwork of $G$ which has the same stoichiometric subspace as $G$,
	\item $G$ is the fully open extension of $N$, i.e.\ $G=\widetilde N$ (in this case, $N$ is a subnetwork of $G$),
	\item $N$ is a CFSTR embedded in a fully open network $G$,
	\item $N$ is an induced network of $G$ obtained by removing one or more intermediates.
	\end{enumerate}
Then, if $N$ admits $m$  positive nondegenerate steady states (for some choice of rate constants), then $G$ admits at least $m$ positive nondegenerate steady states (for some choice of rate constants).  Also, if $N$ admits $q$ positive, stable steady states, then $G$ admits at least $q$ positive, stable steady states.
\end{theorem}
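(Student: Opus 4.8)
The plan is to handle the four cases by different techniques but under one guiding principle: a \emph{nondegenerate} steady state $x^*$ of $N$, for which $\im\big(df_\kappa(x^*)\big)=\St_N$, is a transverse zero of the reaction field, so it can be continued to a genuine steady state of $G$ either by an implicit-function-theorem perturbation or by an explicit construction of rate constants. In each case I would monitor the eigenvalues of the Jacobian as the system is deformed from $N$ to $G$; because eigenvalues vary continuously, controlling them simultaneously shows that the lifted states are pairwise distinct, that they remain nondegenerate (the $\dim\St_G$ eigenvalues required to be nonzero stay off the origin), and, when the starting states are stable, that these eigenvalues remain in the open left half-plane.

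Case~1 is the most direct. Since $N$ is a subnetwork of $G$ with $\St_N=\St_G=:\St$, the field $f^G_\kappa$ equals $f^N_\kappa$ plus the monomial terms of the extra reactions, all of which lie in $\St$; assigning those reactions a common small rate constant $\epsilon$ makes $f^G_\kappa$ a smooth $\St$-valued perturbation of $f^N_\kappa$ on the \emph{unchanged} compatibility class $\invtPoly$. Nondegeneracy makes $df^N_\kappa(x^*)$ invertible on $\St$, so the implicit function theorem continues each of the $m$ steady states for small $\epsilon$, and continuity of eigenvalues preserves nondegeneracy and stability. Case~4 I would treat by exact elimination: an intermediate $X_i$ occurs only in the complex $X_i$, so its coordinate obeys a balance whose steady-state solution expresses $x_i$ as a positive function of the remaining concentrations; substituting this into the rates of the outgoing reactions $X_i\to\CC''$ reproduces precisely the induced reactions $\CC'\to\CC''$ of $N$ for a suitable choice of rate constants of $G$. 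This gives a bijection between positive steady states of $N$ and of $G$, and a direct comparison of the two Jacobians shows that the count, nondegeneracy, and stability are preserved.

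Case~2 is more delicate because the flow reactions enlarge the stoichiometric subspace from $\St_N$ to all of $\R^s$, so $df^N_\kappa(x^*)$ is singular on $\R^s$ and the naive implicit function theorem fails. Here I would scale every flow of $\widetilde N$ by one small parameter, giving all outflows $X_i\to 0$ rate constant $\epsilon$ and all inflows $0\to X_i$ rate $\epsilon\, x^{*}_i$ for a fixed reference steady state $x^{*}$; then $df^{\widetilde N}_\kappa=df^N_\kappa-\epsilon I$ exactly. Two facts make this work simultaneously for all $m$ states: the reaction field lies in $\St_N$, so the component of the steady-state equation orthogonal to $\St_N$ reduces at leading order to a flow term; and the $m$ steady states of $N$ lie in one compatibility class, hence differ by vectors of $\St_N$, so a single inflow vector solves that orthogonal equation for all of them. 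A Lyapunov--Schmidt reduction using invertibility of $df^N_\kappa(x^*)$ on $\St_N$ then produces, for small $\epsilon$, a genuine steady state of $\widetilde N$ near each $x^{*k}$. Finally $df^{\widetilde N}_\kappa$ shifts every eigenvalue of $df^N_\kappa$ by $-\epsilon$, turning the $s-\dim\St_N$ zero eigenvalues into $-\epsilon$ and keeping the others off the imaginary axis, so nondegeneracy holds and stability is inherited.

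The main obstacle is Case~3, which combines the full-subspace difficulty of Case~2 with the removal of species. Given a steady state $x^*$ of the CFSTR $N$ on its species set $\SS_N$, I would freeze each removed species at a fixed positive value and rescale the rate constants of $G$ so that, with the removed species held there, each mass-action reaction of $G$ restricts on $\SS_N$ to the corresponding reaction of $N$ with its original constant; the monomials of the removed species are absorbed into these rescaled constants. Because $G$ is fully open, every removed species carries its own inflow and outflow, which I would tune (again scaling by a small parameter as in Case~2) to balance the net production of that species while keeping it near its frozen value. The crux --- and the reason this is a genuine theorem rather than a direct corollary of Case~1 --- is to make \emph{one} choice of rate constants realize all $m$ lifted states at once while controlling the frozen species, since their balance equations depend on which steady state is being lifted; the resolution is again to exploit that the source states share a compatibility class and to push the removed-species flows into a perturbative regime, after which the removed-species directions acquire the stabilizing outflow eigenvalues and nondegeneracy and stability transfer.
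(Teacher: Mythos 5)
First, a framing point: the survey does not actually prove Theorem~\ref{thm:embedding-mss}; it assembles results whose proofs live in the cited papers (parts 1 and 3 in \cite{joshi2012atoms}, part 2 in \cite{ME_entrapped}, part 4 in \cite{FeliuWiuf}), so your attempt can only be compared against those original arguments. Your Cases 1 and 2 are correct and essentially reproduce them: small rate constants on the new reactions plus the implicit function theorem inside the unchanged compatibility class; and, for the fully open extension, flow terms of the form $\epsilon(x^*_i - x_i)$ so that the component of the steady-state equation orthogonal to $S_N$ is solved simultaneously for all $m$ states (they share a class), followed by the reduction on $S_N$ and the exact shift $df^{\widetilde N}_{\kappa} = df^N_{\kappa}-\epsilon I$.

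The genuine gaps are in Cases 3 and 4. In Case 3 your scaling goes the wrong way: if the removed species $z$ is given inflow/outflow rates of order $\epsilon$ ``as in Case 2,'' its balance equation $\varphi(x) + \epsilon \bar z - \epsilon z = 0$ forces $z = \bar z + \varphi(x)/\epsilon$, which diverges from the frozen value $\bar z$ as $\epsilon \to 0$, destroying the rescaling of the non-flow rate constants that was supposed to make $G$ restrict to $N$ on the retained species. The argument that works (and is what \cite{joshi2012atoms} does) is the opposite regime: give each removed species a \emph{dominant} outflow rate $T$, so that at steady state $z = \bar z + O(1/T)$ uniformly near all $m$ states, then apply the implicit function theorem; the removed directions then carry eigenvalues close to $-T$, which is where nondegeneracy and stability of the lifted states come from. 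Relatedly, your appeal to ``the source states share a compatibility class'' is vacuous in this case: $N$ is a CFSTR, so $S_N = \mathbb{R}^n$ and the entire positive orthant is one class; the per-state mismatch is resolved by the perturbative large-$T$ argument, not by compatibility-class structure. In Case 4, the claimed exact elimination and bijection fail in general: if an intermediate $X$ has inputs from $\CC'_1, \CC'_2$ and outputs to $\CC''_1, \CC''_2$, the induced network $N$ contains all four reactions $\CC'_j \to \CC''_l$, but eliminating $X$ realizes only effective constants $k_{jl} = \kappa_{\CC'_j \to X}\,\kappa_{X \to \CC''_l}/\sum_{l'} \kappa_{X \to \CC''_{l'}}$, a rank-one matrix; hence ``a suitable choice of rate constants of $G$'' cannot reproduce an arbitrary multistationarity-witnessing choice of constants for $N$, and this realizability obstruction is exactly what the machinery of \cite{FeliuWiuf} is designed to handle. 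Moreover, even when the constants are realizable, your ``direct comparison of the two Jacobians'' is not enough for stability: the Jacobian of the reduced system is the Schur complement of the full Jacobian with respect to the intermediate block, and Schur complements do not preserve spectra. Nondegeneracy does transfer (since $\det J_G = \det D \cdot \det J_N$ with $D$ the nonsingular intermediate block), but stability requires making the intermediates fast --- scaling their consumption rates to infinity and invoking a singular-perturbation argument so that the non-intermediate eigenvalues converge to those of the reduced system.
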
 

\begin{remark}
In the context of Theorem~\ref{thm:embedding-mss}, $G$ might admit more steady states than $N$.  For instance, a single reversible reaction has a unique positive steady state (by part 1 of the deficiency zero theorem), but making it irreversible yields no positive steady states (by part 2 of the theorem).
\end{remark}

\begin{remark} \label{rmk:partial-order}
Each of the cases of Theorem~\ref{thm:embedding-mss} defines a partial order on the set of reaction networks.  For case~4, Feliu and Wiuf called a minimal network with respect to this partial order (i.e.\ a network with no intermediates) a ``core model'', and then ``extension models'' are all networks obtained from the core model by adding intermediates.  For case 3 (CFSTRs), the authors gave the name ``CFSTR atoms of multistationarity'' to the minimal networks \cite{joshi2012atoms} (see Definition~\ref{def:atom} below), and now Theorem~\ref{thm:embedding-mss} motivates us to consider the general partial order formed by the embedding relation.
\end{remark}

\begin{definition} \label{def:atom}
\been
\item A fully open network is a {\em CFSTR atom of multistationarity} if 
it is minimal with respect to the embedded network relation among all fully open networks that admit multiple nondegenerate positive steady states.
\item An {\em embedding-minimal multistationary network} is minimal with respect to the embedded network relation among all networks that admit multiple nondegenerate positive steady states.
\item A fully open network is a {\em CFSTR atom of multistability} if 
it is minimal with respect to the embedded network relation among all fully open networks that admit multiple nondegenerate positive stable steady states.
\item An {\em embedding-minimal multistable network} is minimal with respect to the embedded network relation among all networks that admit multiple nondegenerate positive stable steady states.
\enen
\end{definition}
Note that any embedding-minimal multistationary network that is fully open is a CFSTR atom.  

In Section~\ref{sec:atom}, we will begin to answer the following open problems in the case of certain small networks: 
\been
\item Catalogue all CFSTR atoms of multistationarity.
\item Catalogue all embedding-minimal multistationary networks.
\enen

\begin{remark}
In order to implement Theorem~\ref{thm:embedding-mss}, the authors propose the following ``wish list'':
\been
\item A database of known CFSTR atoms of multistationarity, together with:
\item Software for automatically checking whether a given CFSTR contains as an embedded network a member of the database (which would  imply that the larger CFSTR admits multiple steady states).
\enen
\end{remark}

Going beyond item 1 of Theorem~\ref{thm:embedding-mss}, we now 
 consider the case that the stoichiometric subspaces of $G$ and its subnetwork $N$ do {\em not} coincide\footnote{In this case, 
	when the two stoichiometric subspaces do not coincide, there is one result of the 
	form in Theorem~\ref{thm:embedding-mss}: Conradi {\em et al.} proved that under certain conditions  
	one can lift multiple steady states from certain subnetworks called ``elementary 
	flux modes''~\cite[Supporting Information]{Conradi_subnetwork}.}. Then the next (easy) result states that a sufficient condition for being able to lift positive steady states (or for the larger network to have any positive steady states at all) is that some positive linear combination of the new reaction vectors must be in the stoichiometric subspace of $N$.  In particular, adding a single reaction to $N$ that is {\em not} in the stoichiometric subspace of $N$ results in a network with no positive steady states.
\begin{proposition} \label{prop:nec-cond-subnetwork-lift}
Let $N$ be a subnetwork of $G$ with subspaces $S_N$ and $S_G$, respectively.  Let $T=\{y_i \to y_i' \}_{i=1, \dots, q}$ denote the reactions in $G$ but not in $N$.  
If $S_N \neq S_G$ and no positive linear combination of the 
reaction vectors of $T$ is in $S_N$, i.e.\
$$\sum_{i=1}^q \alpha_i \left( y_i' -y_i \right) \notin S_N$$
for all $(\alpha_i) \in \mathbb{R}_{>0}^{q}$, 
then $G$ is not multistationary (and in fact $G$ has no positive steady states).
\end{proposition}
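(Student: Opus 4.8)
The plan is to argue by contradiction: suppose that for some choice of positive rate constants the network $G$ admits a positive steady state $x^\ast \in \mathbb{R}^s_{>0}$, and then produce a positive linear combination of the reaction vectors of $T$ lying in $S_N$, contradicting the hypothesis. First I would write down the mass-action right-hand side $f_\kappa$ of $G$ as in~\eqref{eq:ODE-mass-action} and split the defining sum according to whether a reaction belongs to the subnetwork $N$ or to $T$, so that
\[
0 ~=~ f_\kappa(x^\ast) ~=~ \sum_{y \to y' \,\in\, \RR_N} \kappa_{yy'}\,(x^\ast)^{y}\,(y'-y) ~+~ \sum_{i=1}^q \kappa_i\,(x^\ast)^{y_i}\,(y_i'-y_i)~.
\]

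The key observation is that the first sum is, by the very definition of the stoichiometric subspace~\eqref{eq:stoic_subs}, an element of $S_N$, since it is a linear combination of the reaction vectors of $N$. Rearranging the displayed equation therefore shows that the second sum equals the negative of an element of $S_N$, hence itself lies in $S_N$. Reading off the coefficients of this second sum, I set $\alpha_i := \kappa_i\,(x^\ast)^{y_i}$, so that $\sum_{i=1}^q \alpha_i (y_i'-y_i) \in S_N$.

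It then remains to check positivity of the $\alpha_i$. Since $\kappa_i > 0$ by assumption and the monomial $(x^\ast)^{y_i}$ is a product of strictly positive real numbers (because $x^\ast \in \mathbb{R}^s_{>0}$), each $\alpha_i$ is strictly positive; that is, $(\alpha_i) \in \mathbb{R}_{>0}^q$. This exhibits a positive linear combination of the reaction vectors of $T$ lying in $S_N$, directly contradicting the hypothesis. Hence $G$ admits no positive steady state, which in particular precludes multistationarity.

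The argument is short, and the only real subtlety---which is also where a hypothesis is used essentially---is the positivity step: the conclusion genuinely requires $x^\ast$ to be a \emph{positive} steady state, since it is the strict positivity of the monomials $(x^\ast)^{y_i}$ that forces $\alpha_i > 0$; at a boundary steady state some of these monomials could vanish and the argument would break down. This is consistent with the statement asserting only the absence of positive steady states. I would also remark that the standing assumption $S_N \neq S_G$ merely places us in the regime where the non-inclusion hypothesis is not vacuous, since if $S_N = S_G$ then every reaction vector of $T$ already lies in $S_N$ and the hypothesis could never hold.
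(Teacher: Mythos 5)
Your proof is correct and takes essentially the same approach as the paper's own: both split the steady-state equation $f_\kappa(x^\ast)=0$ into the contributions from $N$ and from $T$, set $\alpha_i := \kappa_i (x^\ast)^{y_i}$, and use the strict positivity of $x^\ast$ to conclude that a positive combination of the $T$ reaction vectors lies in $S_N$, contradicting the hypothesis. Your added remarks (that positivity of $x^\ast$ is essential, and that $S_N \neq S_G$ only rules out vacuity of the non-inclusion condition) are also accurate.
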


\begin{proof}
Let $\{y_i \to y_i'\}_{i=q+1, \dots, r}$ denote the reactions in $N$.  
If $x^*$ is a positive steady state of the mass-action system arising from $G$ and rate constants $\kappa$, then letting $\alpha_i := \kappa_{y_i \to y_i'} (x^*)^{y_i}$ and rearranging the steady state equations yields $\sum_{i=1}^q \alpha_i \left( y_i' -y_i \right) = \sum_{j=q+1}^r \alpha_i \left( y_i' -y_i \right)  \notin S_N$.
\end{proof}

\begin{remark}
Underlying Proposition~\ref{prop:nec-cond-subnetwork-lift} and its proof is the following necessary condition for a network to admit positive steady states: the reaction vectors must be positively dependent~\cite[Remark 2.1]{Fein95DefOne}.
\end{remark}


\begin{remark}
In Theorem~\ref{thm:embedding-mss}, the number of steady states of a ``smaller'' network $N$ is a lower bound on the number for a ``larger'' network $G$.  The only result known to us where (roughly speaking) a ``smaller'' network gives an {\em upper bound} on the number of steady states of a ``larger'' network is due to Feliu and Wiuf~\cite{FeliuWiuf}.  They considered the (larger) ``extension models'' of some ``core model'' network (recall Remark~\ref{rmk:partial-order}); then a ``canonical model'' is obtained by adding certain the reactions to the core model.  If this canonical model is not multistationary, then every extension model of the core model also is not multistationary~\cite[Corollary~6.1]{FeliuWiuf}. 
\end{remark}

\subsection{Using square embedded networks to apply the injectivity criterion to CFSTRs}
In Theorem~\ref{thm:embedding-mss}, we saw that embedded networks are useful for both ``ruling out'' and ``ruling in'' multistationarity.
In this section, we will see that certain embedded networks (namely, square embedded networks) can be used to elaborate on the injectivity criterion (Theorem~\ref{thm:inj}) for the case of a CFSTR, and again give results both for precluding and guaranteeing multiple steady states (Theorems~\ref{thm:inj-cfstr} and~\ref{thm:det-opt}).

\begin{definition} \label{def:square-or}
\been
\item A network is {\em square} if it has the same number of reactions and species.
\item Consider a square reaction network $G = (\SS, \CC, \RR)$ with $\RR= \{y_1 \to y_1', \ldots, y_s \to y_s' \}$ .
The {\em orientation of $G$} is ${\rm Or}(G) =\det ([y_1, \ldots, y_s]) \det ([y_1 - y_1', \ldots, y_s - y_s'])$.
\enen
\end{definition}

It is straightforward now (by translating part 3 of Theorem~\ref{thm:inj}) to give the following equivalent formulation of the injectivity criterion: all square embedded networks of size equal to the dimension of the stoichiometric subspace and with nonzero orientation have the same orientation (and at least one is nonzero). 
We will state this in Theorem~\ref{thm:inj-cfstr} for the case of CFSTRs.  
%

Any square embedded network that contains an inflow reaction has zero orientation. On the other hand, removing a species which participates in an outflow reaction does not change the orientation of the resulting SEN.
These two observations significantly simplify the calculation of orientation of all $s$-square networks of a CFSTR in $s$ species. 
In fact, we only need to examine the square embedded networks of $G^{\circ}$ where $G^{\circ}$ is the non-flow subnetwork of $G$. This is because there is an orientation-preserving one-to-one map between the subnetworks of $G$ containing $s$ reactions at least one of which is a non-flow reaction, and the square embedded networks of $G^\circ$.

We need the following definitions in order to state this result precisely. 

\begin{definition} \label{def:tm}
Consider a reaction network whose set of non-flow reactions is given by:
	\begin{align*}
	y_1 & \to y_1' \quad & y_2 & \to y_2' \quad & \ldots & \quad & y_l & \to y_l'  \\
	y_{l+1} & \leftrightarrow y_{l+1}' \quad & y_{l+2} & \leftrightarrow y_{l+2}' \quad & \ldots & \quad & y_{l+k} & \leftrightarrow y_{l+k}' ~,
	\end{align*}
where none of the first $l$ reactions is reversible.  The {\em total molecularity} of a species $X_i$ in the network
is the following non-negative integer:
$	\operatorname{TM}(X_i)  =  \sum_{j=1}^{l+k} \left( y_{ji} + y_{ji}' \right), $
	where $y_{ji}$ is the stoichiometric coefficient of species $X_i$ in the complex $y_j$.
\end{definition}

\begin{definition}\label{def:relevant}
A network $N$ is {\em relevant} if 	
	it satisfies the following properties:
	\begin{itemize}
		\item $N$ has no outflows, inflows, generalized inflow reactions $0 \rightarrow \sum_i a_i X_i$, or generalized outflows $a X_i \to b X_i$ (where $0 \leq b \leq a$). 
		\item $N$ does not contain a pair of reversible reactions.
		\item Each species appears in at least two complexes and in at least one reactant complex. 
		\item At least one species of $N$ has a total molecularity of at least 3.
	\end{itemize}
\end{definition}
Non-relevant square embedded networks  
may be ignored for the purposes of establishing injectivity. This observation contributes to the following result: 
\begin{theorem}[\cite{ME1}, Theorems 4.9 and 5.1 of \cite{simplifying}] \label{thm:inj-cfstr}
For a CFSTR $G$ with $s$ species, with $G^{\circ}$ its subnetwork of non-flow reactions, the following are equivalent:
\begin{enumerate}
	\item $G$ passes the injectivity criterion (i.e.\ satisfies the equivalent conditions of Theorem~\ref{thm:inj}).
	\item Each $s$-square embedded network of $G$ has non-negative orientation. 
	\item Each square embedded network of $G^{\circ}$ has non-negative orientation.
	\item Each {\em relevant} square embedded network of $G^{\circ}$ has non-negative orientation.
\end{enumerate}
If these equivalent conditions hold, then $G$ is not multistationary.  
\end{theorem}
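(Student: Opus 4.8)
The plan is to prove the four conditions equivalent through the chain $(1) \Leftrightarrow (2) \Leftrightarrow (3) \Leftrightarrow (4)$, assembling the translation of Theorem~\ref{thm:inj} with the two reduction principles recorded just before the theorem. For $(1) \Leftrightarrow (2)$ the key observation is that in a CFSTR the outflow reactions $X_i \to 0$ contribute the reaction vectors $-e_1, \dots, -e_s$, so $\St = \mathbb{R}^s$ and $\Gamma$ has rank $s$. Hence in part~3 of Theorem~\ref{thm:inj} the only row-index set is $I = \{1, \dots, s\}$, while $J$ ranges over the size-$s$ subsets of reactions, each selecting an $s$-square embedded network (which, since no species is removed, is just an $s$-reaction subnetwork). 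First I would check from Definition~\ref{def:square-or} that the subnetwork indexed by $J$ satisfies ${\rm Or}(G_J) = (-1)^s \det(\Gamma_{I,J})\,\det(M_{J,I})$, so that the ``common nonzero sign'' condition of Theorem~\ref{thm:inj} becomes exactly the statement that all nonzero orientations agree. To pin the common sign down as \emph{nonnegative}, I would exhibit the all-outflow subnetwork $\{X_i \to 0\}$, whose reactant matrix and whose $[y_i - y_i']$ matrix are both the identity, so its orientation is $+1$; this simultaneously supplies the mandatory nonzero product and forces the shared sign to be positive, yielding condition~2.

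For $(2) \Leftrightarrow (3)$ I would invoke the two observations already stated: first, any $s$-square subnetwork containing an inflow $0 \to X_i$ has a zero reactant column, hence orientation $0$, which is vacuously nonnegative; second, expanding both determinants in ${\rm Or}$ along the column of an outflow $X_i \to 0$ — whose reactant column is $e_i$ and whose $y_i - y_i'$ column is also $e_i$ — projects row $i$ out of every remaining column and leaves, with factor $+1$, the orientation of the network obtained by deleting $X_i$ and that outflow. Iterating the second step removes all outflows and their species and terminates either in an inflow-bearing network (orientation $0$) or in a genuine square embedded network of $G^{\circ}$. This realizes the orientation-preserving bijection asserted before the theorem, between the $s$-square subnetworks of $G$ carrying nonzero orientation and the square embedded networks of $G^{\circ}$, so nonnegativity on one side matches nonnegativity on the other.

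For $(3) \Leftrightarrow (4)$ the forward direction is immediate, since the relevant networks form a subclass. For $(4) \Rightarrow (3)$ I would induct on the number of species and show that any \emph{non}-relevant square embedded network of $G^{\circ}$ either has orientation $0$ or reduces, up to a strictly positive scalar, to a smaller square embedded network. Inspecting the clauses of Definition~\ref{def:relevant}: a generalized inflow yields a zero reactant column, a reversible pair yields two proportional $y - y'$ columns, and a species absent from every reactant complex yields a zero row of the reactant matrix — each forcing orientation $0$. An outflow, a generalized outflow $aX_i \to bX_i$, or a species occurring in only one complex each furnish a row or column along which both determinants factor, extracting the nonnegative multiple $1$, $a(a-b)$, or $c^2$ respectively of the orientation of the network with that species deleted; the inductive hypothesis then controls the sign.

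The hard part will be the last relevance clause: that it suffices to keep only networks in which some species has total molecularity at least $3$. When every total molecularity is at most $2$ but the remaining relevance conditions hold, each species lies in exactly two complexes with coefficient $1$, so the reactant complexes are $0/1$ vectors and the incidence data is essentially a graph; here no single row/column reduction is available, and one must instead prove \emph{directly} that $\det([y_1, \dots, y_t])\,\det([y_1 - y_1', \dots, y_t - y_t'])$ is nonnegative for every such structured network. This is the genuinely combinatorial core (the content of Theorem~5.1 of~\cite{simplifying}); I would attack it by analyzing the cycle structure of the species-sharing graph and matching the monomials contributed to each determinant, expecting the orientation to emerge as a sum of squares or otherwise manifestly nonnegative quantity. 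Once this is established the induction closes and the four conditions coincide, and the concluding implication to ``not multistationary'' is inherited verbatim from Theorem~\ref{thm:inj}.
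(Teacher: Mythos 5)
Your chain $(1)\Leftrightarrow(2)\Leftrightarrow(3)$ is correct, and it is exactly the route sketched in the discussion preceding the theorem (note that this survey does not prove the theorem at all --- it is quoted from \cite{ME1} and \cite{simplifying}, with only the two orientation observations and the bijection claim offered as hints). In particular, the identity ${\rm Or}(G_J)=(-1)^s\det(\Gamma_{I,J})\det(M_{J,I})$, the use of the all-outflow subnetwork to force the common sign to be $(-1)^s$ (equivalently, to make all orientations nonnegative), and the column-expansion along outflows realizing the orientation-preserving correspondence with square embedded networks of $G^{\circ}$ are all sound.

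The problems are in $(4)\Rightarrow(3)$. First, a concrete error: your claimed $c^2$-factorization for ``a species occurring in only one complex'' is false whenever that complex appears in more than one reaction. Take $N=\{A\to B,\ B\to 2A\}$. The species $B$ occurs only in the complex $B$, yet
${\rm Or}(N)=\det\left(\begin{smallmatrix}1&0\\0&1\end{smallmatrix}\right)\cdot\det\left(\begin{smallmatrix}1&-2\\-1&1\end{smallmatrix}\right)=-1<0$,
whereas your reduction deletes $B$ (and the reaction whose reactant contains it) and reports the orientation of $\{A\to 0\}$, which is $+1$. The reason the factorization fails is that the row of $B$ in $[y_1-y_1',\,y_2-y_2']$ has \emph{two} nonzero entries --- one because $B$ is a product of the first reaction, one because it is the reactant of the second --- so only the reactant determinant expands along a single entry. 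Handling this case correctly requires composing the two reactions through $B$ into $A\to 2A$ (an induced-network operation), and such a composite is not in general an embedded network of $G^{\circ}$, so your induction does not close. This negative orientation is genuine: the fully open extension of $N$ fails injectivity (its Jacobian determinant $l_Al_B+l_Ak_2+k_1l_B-k_1k_2$ changes sign). Incidentally, the same example shows that Definition~\ref{def:relevant} read literally (``appears in at least two complexes'' as a statement about the set of complexes) cannot be the right notion: $N$ would then have no relevant square embedded networks, making condition~4 hold vacuously while condition~3 fails; the relevance clause must be interpreted as counting reactant/product occurrences with multiplicity, and your reduction argument needs to be rebuilt on that reading.

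Second, the deeper gap, which you yourself flag: the clause that networks in which every species has total molecularity at most $2$ may be discarded is left entirely unproved. ``Expecting the orientation to emerge as a sum of squares'' after analyzing the cycle structure is a plan, not an argument, and this combinatorial statement is precisely the substance of Theorem~5.1 of \cite{simplifying}; without it, condition~4 is strictly weaker than condition~3 so far as your proposal shows. In sum, you have established $(1)\Leftrightarrow(2)\Leftrightarrow(3)$ and the trivial direction $(3)\Rightarrow(4)$, but $(4)\Rightarrow(3)$ --- the part that gives the theorem its practical value, and the part relied on by Lemma~\ref{lem:m=1} and Theorem~\ref{thm:even-n-no-MSS} --- remains open in your write-up, with one of its announced reduction steps actually false as stated.
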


The following is a partial generalization of Theorem~\ref{thm:inj-cfstr}: for any network $G$ (not necessarily a CFSTR) with stoichiometric subspace $\St$, the network $G$ is injective if and only if each $(\dim \St)$-square embedded network of $G$ has non-negative orientation.  This result is simply a translation of item~3 of Theorem~\ref{thm:inj}.

Finally, we present a criterion that can certify multistationarity in a CFSTR: if it fails the injectivity criterion (that is, the equivalent conditions in Theorem~\ref{thm:inj-cfstr} do not hold), then sometimes a negatively oriented SEN can be used to guarantee multiple steady states.  The result is due to Craciun and Feinberg.
\begin{theorem}[Determinant optimization method, Theorem 4.2 of~\cite{ME1}] \label{thm:det-opt}
For a CFSTR $G$ with $s$ species, with $G^{\circ}$ its non-flow subnetwork, assume that there exists a negatively oriented square embedded network of $G^{\circ}$ of size $s$, consisting of reactions $y_i \to y_i'$ (for $i=1,2,\dots , s$), and that some positive linear combination of the negative reaction vectors $y_i - y_i'$ is a positive vector (i.e.\ there exist $\eta_1, \eta_2, \dots, \eta_s>0$ such that $\sum\limits_{i=1}^s \eta_i (y_i - y_i') \in \mathbb{R}^s_{>0}$).  Then, the fully open extension of $G$ is multistationary.
\end{theorem}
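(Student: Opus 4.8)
The plan is to work directly with the fully open extension $\widetilde G$, whose stoichiometric subspace is all of $\mathbb{R}^s$, so that nondegenerate steady states are isolated positive zeros of the mass-action map $f_\kappa$ and multistationarity means $f_\kappa$ has two or more positive zeros. The strategy is degree-theoretic: I would produce a \emph{single} positive steady state whose Jacobian determinant has the ``wrong'' sign, and then invoke the fact that for a fully open network the signed count of steady states is a rate-constant-independent invariant, which forces a second steady state to exist.

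First I would record the Cauchy--Binet expansion of the Jacobian. Writing $f_\kappa(x) = \Gamma\, D(x)\, M\, \mathrm{diag}(1/x)$, where $D(x)$ is the positive diagonal matrix of monomial reaction rates and $M$ is the reactant matrix, one obtains
\begin{align*}
\det\bigl(df_\kappa(x)\bigr) ~=~ (-1)^s \Bigl(\prod_j 1/x_j\Bigr) \sum_{J} {\rm Or}(J)\prod_{k \in J} D_{kk}(x)~,
\end{align*}
where $J$ ranges over the $s$-element sets of reactions, i.e.\ over the $s$-square embedded networks, and ${\rm Or}(J)$ is the orientation of the corresponding square network. In the reference system consisting of inflows and outflows only, the single square network with nonzero orientation is the all-outflow one, whose orientation is $+1$; hence the generic sign of $\det(df_\kappa)$ is $(-1)^s$.

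Next I would build the distinguished steady state. Activate only the outflows and the $s$ reactions of the given negatively oriented SEN, and choose rate constants so that $k_i\,(x^*)^{y_i} = R\,\eta_i$ at the base point $x^* = \mathbf 1$, where the $\eta_i>0$ are those in the hypothesis and $R>0$ is large. Since ${\rm Or}$ of the SEN is negative while every other active square network contributes a strictly lower power of $R$, the sum above is dominated for large $R$ by the SEN term, giving $\det(df_\kappa(x^*))\neq 0$ with sign $(-1)^{s+1}$. The positivity hypothesis enters precisely here: with these rate constants the net non-flow contribution at $x^*$ equals $-R\sum_i \eta_i(y_i - y_i') = -R\,w$, a strictly negative vector because $w := \sum_i \eta_i(y_i - y_i')\in\mathbb{R}^s_{>0}$. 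Consequently the inflow rates needed to make $x^*$ a steady state, namely $\kappa^{in} = \mathrm{diag}(\kappa^{out})\,x^* + R\,w$, are automatically positive for \emph{every} positive choice of the outflow rates. Thus $x^*$ is a genuine nondegenerate positive steady state of $\widetilde G$ whose index $\mathrm{sign}\det(-df_\kappa(x^*)) = (-1)^s(-1)^{s+1} = -1$ is the reverse of the reference value.

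Finally I would close by degree theory: for a fully open network the total Brouwer degree of $-f_\kappa$ over the positive orthant equals $+1$ (its value for the inflow/outflow reference system, whose unique steady state has index $+1$), so the indices of the positive steady states of $\widetilde G$ must sum to $+1$. Since $x^*$ contributes $-1$, at least one further positive steady state must exist, establishing multistationarity; the counting and stability refinements then follow by a standard perturbation to the nondegenerate case. The main obstacle is this last step: justifying that the signed count (equivalently, the Brouwer degree) is a rate-constant-independent invariant requires controlling $f_\kappa$ near the boundary and at infinity of the positive orthant---the characteristic technical difficulty for CFSTRs---so that the homotopy to the inflow/outflow reference system is admissible without steady states escaping through the boundary.
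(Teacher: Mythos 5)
The paper itself does not prove this theorem (it quotes it from [ME1, Theorem 4.2]), so your proposal must stand on its own, and its first three steps do: the Cauchy--Binet expansion of $\det(df_\kappa)$ over square embedded networks is correct (it is exactly what underlies part 3 of Theorem~\ref{thm:inj}), and your construction of the steady state $x^*=\mathbf{1}$ --- large rate constants $\kappa_i=R\eta_i$ on the negatively oriented SEN so that its term dominates and $\det(df_\kappa(x^*))$ has sign $(-1)^{s+1}$, with the hypothesis $\sum_i\eta_i(y_i-y_i')\in\mathbb{R}^s_{>0}$ used precisely to make the required inflow rates positive --- is correct and captures how the two hypotheses of the theorem actually function. (One routine omission: you switch on only the SEN reactions and the flows, so at the end you must lift the steady states from this subnetwork to the full fully open extension, e.g.\ via part 1 of Theorem~\ref{thm:embedding-mss}; this is harmless since your steady states are nondegenerate.)

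The genuine gap is the closing step, and it is not merely the technical obstacle you flag: the claim that for a fully open network the Brouwer degree of $-f_\kappa$ over $\mathbb{R}^s_{>0}$ is a rate-constant-independent invariant equal to $+1$ is \emph{false}. Simplest counterexample: the fully open network $\{0\leftrightarrows A,\ A\to 2A\}$ has $f=c+(k-\xi)a$, with one positive zero of index $+1$ when $k<\xi$ and no zeros at all when $k>\xi$; the degree jumps because the zero escapes to infinity as $k\to\xi$. Fully open networks need not be dissipative, so the homotopy to the inflow/outflow reference system is inadmissible, and it fails exactly in the regime your construction creates, where autocatalytic non-flow rates dominate the outflows. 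Concretely, take $G=\{A\to 2A,\ A+B\to 0\}$, which satisfies both hypotheses of the theorem (with $\eta=(1,2)$): eliminating $b$ reduces the steady-state equations of $\widetilde G$ to a single equation $F(a)=0$ with $F(0)=c_A>0$, one checks that multistationarity forces $k_1>\xi_A$, and in that regime $F(a)\to+\infty$ as $a\to\infty$, so the indices of the positive steady states sum to $0$, not $+1$. Thus the invariant you want to invoke does not exist, and the second steady state must be produced by a different argument. In this example what saves you is boundary behavior, not degree: $F(0)>0$ together with your anomalous root ($F(a^*)=0$, $F'(a^*)>0$) forces a second root in $(0,a^*)$ by the intermediate value theorem. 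The cited proof in [ME1] likewise avoids any global degree computation: roughly, the negative orientation yields (via the injectivity equivalences of Theorem~\ref{thm:inj}) rate constants and two distinct positive points with the same image under the inflow-free map, and the $\eta$-hypothesis is used to arrange that this common image is componentwise negative, so that its negation is a legitimate positive inflow vector making both points steady states. As written, your last step rests on a false statement and the proof does not go through without replacing it by an argument of this kind.
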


Building on Theorem~\ref{thm:det-opt}, Feliu recently gave a new method based on injectivity for either guaranteeing or precluding multistationarity for general (not necessarily CFSTR) networks~\cite[\S 2.4]{feliu}.

\section{CFSTR atoms of multistationarity} \label{sec:atom}
For certain small reaction networks, a complete characterization of multistationarity is known.  Here we report on these results, including 
what is known about small CFSTR atoms of multistationary (recall Definition~\ref{def:atom}).

\subsection{Classification of multistationary, fully open networks with 1 non-flow reaction}
Fully open reaction networks with a single non-flow reaction (irreversible or reversible) form an infinite family of networks. Strikingly, however, the multistationary networks in this family have been completely determined.  Moreover, their characterization depends entirely on a simple arithmetic relation on the stoichiometric coefficients, which is ascertained at a glance.  The following result is due to the first author~\cite{joshi2013complete}, and its proof uses primarily the deficiency one algorithm, along with deficiency theory (Theorems~\ref{thm:def-0}--\ref{thm:def-1}).

\begin{theorem}[Classification of fully open networks with one non-flow reaction \cite{joshi2013complete}] \label{thm:onerxn}
Let $n$ be a positive integer.  Let $a_1, a_2, \dots, a_n, b_1, b_2, \dots, b_n$ be nonnegative integers.
\begin{enumerate}
\item The (general) fully open network with one irreversible non-flow  reaction and $n$ species: 
\begin{align*}
&0 \leftrightarrows X_1 \quad \quad 
 0 \leftrightarrows X_2 \quad  \cdots \quad
0 \leftrightarrows X_n \quad \quad  \\
&a_1X_1 + \cdots + a_n X_n \to b_1X_1 + \cdots + b_n X_n 
\end{align*}
is multistationary if and only if $\sum_{i: b_i > a_i} a_i >1$. 
\item The (general) fully open network with one reversible non-flow  reaction and $n$ species:
\begin{align*}
&0 \leftrightarrows X_1 \quad \quad 
 0 \leftrightarrows X_2 \quad  \cdots \quad
0 \leftrightarrows X_n \quad \quad  \\
&a_1X_1 + \ldots a_n X_n \leftrightarrows b_1X_1 + \ldots b_n X_n 
\end{align*}
is multistationary if and only if $\sum_{i: b_i > a_i} a_i >1$ or $\sum_{i: a_i > b_i} b_i >1$. 
\end{enumerate}
\end{theorem}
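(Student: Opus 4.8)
The plan is to reduce the question, for a fixed choice of rate constants, to counting the roots of a single smooth function of one variable, and then to read off the dichotomy from the sign of its derivative. For part~(1), write $\iota_i$ and $o_i$ for the rate constants of the inflow $0\to X_i$ and outflow $X_i\to 0$, write $\kappa$ for the rate constant of the non-flow reaction, and set $d_i:=b_i-a_i$, $P:=\{i: b_i>a_i\}$, $N:=\{i: a_i>b_i\}$. Because the network is fully open, $\St=\mathbb{R}^n$ and there is a single stoichiometric compatibility class, so multistationarity simply means that $f_\kappa$ has two or more positive zeros for some choice of constants. The steady-state equation for $X_i$ is $\iota_i-o_ix_i+d_i\,\kappa x^a=0$; introducing the flux $v:=\kappa x^a>0$ gives $x_i=(\iota_i+d_iv)/o_i$, and substituting back yields the single consistency equation $v=\kappa\prod_i\big((\iota_i+d_iv)/o_i\big)^{a_i}$. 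Thus positive steady states correspond bijectively to the roots $v$, in the interval where every $x_i(v)>0$, of $\widetilde\Phi(v)=-\log\kappa$, where $\widetilde\Phi(v):=\sum_i a_i\log\!\big((\iota_i+d_iv)/o_i\big)-\log v$. Since $\kappa$ only translates the target value vertically, the network is multistationary if and only if $\widetilde\Phi$ is non-monotone for some choice of the remaining constants.

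The whole statement now hinges on the elementary identity
\[
v\,\widetilde\Phi'(v)\;=\;\sum_i \frac{a_i d_i v}{\iota_i+d_iv}\;-\;1 .
\]
For the necessity half of part~(1), suppose $\sum_{i\in P}a_i\le 1$. On the admissible interval each denominator is positive; for $i\in P$ one has $0<\tfrac{d_iv}{\iota_i+d_iv}<1$, for $i\in N$ the summand is negative, and for $d_i=0$ it vanishes, so $\sum_i \tfrac{a_id_iv}{\iota_i+d_iv}<\sum_{i\in P}a_i\le 1$ and hence $\widetilde\Phi'(v)<0$ throughout. Then $\widetilde\Phi$ is strictly decreasing, so $\widetilde\Phi(v)=-\log\kappa$ has at most one root and the network is not multistationary.

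For the sufficiency half of part~(1), suppose $\sum_{i\in P}a_i>1$. I would choose the inflows $\iota_i$ of the consumed species $i\in N$ extremely large, so that on a bounded range of $v$ their summands $\tfrac{a_id_iv}{\iota_i+d_iv}$ are negligible while the produced-species summands together approach $\sum_{i\in P}a_i>1$; then $v\,\widetilde\Phi'(v)$ equals $-1$ near $v=0$ but is strictly positive on that range, so $\widetilde\Phi$ decreases from $+\infty$ and then increases, acquiring an interior local minimum. Choosing $\kappa$ so that $-\log\kappa$ lies just above this minimum produces at least two roots $v_1<v_2$, each transversal ($\widetilde\Phi'\neq 0$); since $\St=\mathbb{R}^n$, $\det df_\kappa(x^*)$ is a nonzero multiple of $\widetilde\Phi'(v^*)$, so both steady states are nondegenerate and the network is multistationary.

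For part~(2), the ``if'' direction is immediate from the theory already developed: setting $\kappa_-=0$ (resp.\ $\kappa_+=0$) exhibits the forward (resp.\ backward) irreversible network of part~(1) as a subnetwork with the same stoichiometric subspace $\mathbb{R}^n$, so Theorem~\ref{thm:embedding-mss}(1) lifts multistationarity upward; hence either $\sum_{i\in P}a_i>1$ or $\sum_{i\in N}b_i>1$ forces multistationarity. For the ``only if'' direction I would again pass to a signed net flux $w:=\kappa_+x^a-\kappa_-x^b$, obtaining $x_i(w)=(\iota_i+d_iw)/o_i$ and the single equation $F(w):=A(w)-B(w)-w=0$ with $A=\kappa_+\prod_i x_i^{a_i}$ and $B=\kappa_-\prod_i x_i^{b_i}$; it then suffices to show that under both hypotheses every root is a strict down-crossing, $F'(w^*)<0$, since this precludes a second root. \emph{This last sign estimate is the main obstacle}: unlike the irreversible case, $F'$ does not collapse to a single-signed expression, and one must use $\sum_{i\in P}a_i\le1$ and $\sum_{i\in N}b_i\le1$ simultaneously to control the competing forward and backward contributions. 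The cleanest route may instead be the one taken in~\cite{joshi2013complete}: each of these networks has deficiency one, so the deficiency one algorithm furnishes a decision procedure whose output is exactly the stated arithmetic boundary.
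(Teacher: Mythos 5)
Your proposal needs to be measured against the fact that the survey itself gives no proof of Theorem~\ref{thm:onerxn}: the result is imported from \cite{joshi2013complete}, and the paper records only that the proof there ``uses primarily the deficiency one algorithm, along with deficiency theory (Theorems~\ref{thm:def-0}--\ref{thm:def-1}).'' Your treatment of part~(1) is a correct and genuinely different, self-contained route. The substitution $x_i=(\iota_i+d_iv)/o_i$ with $v=\kappa x^a$, the identity $v\,\widetilde\Phi'(v)=\sum_i a_id_iv/(\iota_i+d_iv)-1$, and the monotonicity argument when $\sum_{i\in P}a_i\le 1$ are all sound (note the admissible set $\{v>0:\iota_i+d_iv>0\ \forall i\}$ is an interval, so strict decrease does force at most one root), and your determinant claim checks out: by the matrix determinant lemma, $\det df_\kappa(x^*)=(-1)^{n+1}\bigl(\prod_i o_i\bigr)\,v^*\widetilde\Phi'(v^*)$, so nondegeneracy is exactly transversality of the crossing. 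Two small repairs: you should justify that the level $-\log\kappa$ can be chosen to be a regular value (e.g.\ $\widetilde\Phi$ is real-analytic and nonconstant, so critical points are isolated and almost every level works), and note that the minimum you produce is interior to the admissible interval. The ``if'' half of part~(2) via Theorem~\ref{thm:embedding-mss}(1) is also fine, since both networks are fully open and share stoichiometric subspace $\mathbb{R}^n$, and your part~(1) supplies the nondegenerate steady states that theorem requires.

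The genuine gap — which you flag yourself — is the ``only if'' half of part~(2): nowhere do you prove that $\sum_{i:b_i>a_i}a_i\le 1$ together with $\sum_{i:a_i>b_i}b_i\le 1$ precludes multistationarity of the reversible network, and this is not a routine extension of part~(1). With the net flux $w=\kappa_+x^a-\kappa_-x^b$, the derivative of your consistency function at a root is $A(w)\sum_j a_jd_j/(\iota_j+d_jw)-B(w)\sum_j b_jd_j/(\iota_j+d_jw)-1$ with $A(w)-B(w)=w$; the estimate that closed part~(1) (bound the $P$-terms by $\sum_{i\in P}a_i\le 1$, discard the negative $N$-terms) no longer yields a sign, because the discarded terms reappear through $-B\sum_j b_jd_j/(\iota_j+d_jw)$ with the opposite sign and must be controlled by the second hypothesis; a naive bound reduces the requirement to $\sum_{j\in P}b_j>1$, which is not assumed. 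This reversible case is precisely where the cited proof resorts to the deficiency one algorithm and its case analysis, and deferring to that reference leaves the step unproven. As it stands, your proposal fully establishes part~(1) and one direction of part~(2), but not the theorem.
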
   

Theorem~\ref{thm:onerxn} can be used to determine all CFSTR atoms with exactly one non-flow reaction:
\begin{corollary}[Classification of CFSTR atoms of multistationarity with one non-flow reaction] \label{cor:cfstr-1-rxn}
Up to symmetry, the CFSTR atoms of multistationarity that have only one non-flow reaction are the following:
\been
	\item  $\{0 \leftrightarrow  A, ~ mA \to nA \}$, where $m$ and $n$ are positive integers satisfying $ n > m > 1$. 
	\item $\{0 \leftrightarrow  A, ~0 \leftrightarrow  B, ~A + B \to mA + nB \}$,  where $m$ and $n$ are positive integers with  $m>1$ and $n  > 1$. 
\enen
\end{corollary}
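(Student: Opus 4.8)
The plan is to combine Theorem~\ref{thm:onerxn}, which decides multistationarity for \emph{every} fully open network with a single non-flow reaction, with a direct analysis of the embedded-network partial order restricted to this family. By Definition~\ref{def:atom}, a CFSTR atom is a fully open multistationary network that is minimal under embedding, so I must first isolate which fully open one-non-flow-reaction networks are multistationary, and then decide which of these are minimal.

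I would begin by reducing to the irreversible case. If the single non-flow reaction is reversible and the network is multistationary, then part~2 of Theorem~\ref{thm:onerxn} says at least one of the two arithmetic inequalities holds, say $\sum_{i : b_i > a_i} a_i > 1$. Deleting the reverse reaction gives a proper embedded (indeed sub-) network that is still fully open and, by part~1 of the same theorem, still multistationary; hence a reversible network is never minimal. Thus every atom has an irreversible non-flow reaction $a_1 X_1 + \cdots + a_n X_n \to b_1 X_1 + \cdots + b_n X_n$ which, by Theorem~\ref{thm:onerxn}(1), satisfies $M := \sum_{i : b_i > a_i} a_i > 1$.

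Next I would turn minimality into arithmetic. Among fully open networks the only minimality-relevant moves are species deletions: deleting the non-flow reaction leaves pure flows, a weakly reversible deficiency-zero network that is never multistationary by Theorem~\ref{thm:def-0}, while deleting an outflow or inflow destroys full openness. Removing a species $X_j$ produces the analogous one-reaction fully open network on the remaining species, whose multistationarity quantity is $M - a_j$ when $b_j > a_j$ and $M$ otherwise. Minimality then forces two things: every species must increase, i.e.\ $b_i > a_i$ for all $i$ (for any non-increasing species could be deleted while preserving $M > 1$, and in particular species with $a_i = b_i = 0$ are excluded), and $M - a_j \le 1$ for every $j$. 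Writing $s = \sum_i a_i = M$, these conditions read $s \ge 2$ together with $a_j \ge s - 1 \ge 1$ for all $j$.

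Finally I would solve this small system. Summing $a_j \ge s-1$ over the $n$ species gives $s \ge n(s-1)$, so $s \le n/(n-1)$; combined with $s \ge 2$ this forces $n \le 2$. For $n = 1$ one gets $mA \to nA$ with $m = s \ge 2$ and $n > m$, which is case~1; for $n = 2$ one gets $s = 2$, hence $a_1 = a_2 = 1$ and $b_1, b_2 > 1$, i.e.\ $A + B \to mA + nB$ with $m, n > 1$, which is case~2 (the interchangeability of the two species accounting for ``up to symmetry''). Conversely, both families satisfy $M > 1$ and the minimality inequalities, so they are genuinely atoms. The one genuinely delicate point is the reduction step: one must argue carefully that within the fully open poset the sole minimality-relevant operations are species deletions, and that deleting the reverse of a reversible reaction already breaks minimality; once that is pinned down, the remaining counting is routine.
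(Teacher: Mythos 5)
Your proposal is correct and takes essentially the same approach as the paper, which presents Corollary~\ref{cor:cfstr-1-rxn} as a direct consequence of Theorem~\ref{thm:onerxn}: your reduction to the irreversible case, the observation that only species deletions (and deletion of the non-flow reaction) yield fully open embedded networks, and the resulting arithmetic $b_i > a_i$ for all $i$, $\sum_i a_i \geq 2$, $a_j \geq \sum_i a_i - 1$ are precisely the details the paper leaves implicit. The only caveat, shared with the paper, is that the atoms must admit multiple \emph{nondegenerate} steady states, which is inherited from the underlying classification in the cited work rather than from the literal statement of Theorem~\ref{thm:onerxn}.
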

In Section~\ref{subsec:2-fam}, we will see that all the CFSTR atoms in Corollary~\ref{cor:cfstr-1-rxn} are in fact embedding-minimal multistationary networks (Theorem~\ref{thm:infin}).

\subsection{Classification of multistationary, fully open networks with 2 non-flow reactions}
A {\em monomolecular} complex has the form $X_i$ for some species $X_i$, while a bimolecular complex has the form $2X_i$ or $X_i + X_j$.  A complex is {\em at most bimolecular} if it is the zero complex, monomolecular, or bimolecular.  A network is {\em at most bimolecular} if every complex in the network is at most bimolecular.

By Corollary~\ref{cor:cfstr-1-rxn}, there are no at-most-bimolecular CFSTR atoms with only one non-flow reaction.  An enumeration of at-most-bimolecular CFSTR atoms with two non-flow reactions (irreversible or reversible) was completed by the authors in~\cite{joshi2012atoms}:
\begin{theorem}[Classification of CFSTR atoms of multistationarity with two non-flow reactions and at-most-bimolecular complexes~\cite{joshi2012atoms}] \label{thm:2-rxn}
Up to symmetry, there are 11 CFSTR atoms of multistationarity that have only two non-flow reactions (irreversible or reversible) and complexes that are at most bimolecular:
\been
\item $\{  0 \leftrightarrows A, ~ 0 \leftrightarrows B, ~
A \to 2A,~
A+B \to 0
  \}$
\item $\{  0 \leftrightarrows A, ~ 0 \leftrightarrows B, ~
A \to 2A,~
A \leftrightarrows  2B
  \}$
\item $\{  0 \leftrightarrows A, ~ 0 \leftrightarrows B, ~ 0 \leftrightarrows C, ~
A \to 2A,~
A \leftrightarrows  B+C
  \}$
\item $\{  0 \leftrightarrows A, ~ 0 \leftrightarrows B, ~
A \to A+B,~
2B \to A
  \}$
\item $\{  0 \leftrightarrows A, ~ 0 \leftrightarrows B, ~
A \to A+B,~
2B \to 2A 
  \}$
\item $\{  0 \leftrightarrows A, ~ 0 \leftrightarrows B, ~
A \to A+B \to 2A
  \}$
\item $\{  0 \leftrightarrows A, ~ 0 \leftrightarrows B, ~
A \to A+B,~
2B \to A+B
\}$
\item $\{  0 \leftrightarrows A, ~ 0 \leftrightarrows B, ~ 
B \to 2A \to A+B
  \}$
\item $\{  0 \leftrightarrows A, ~ 0 \leftrightarrows B, ~
B \to 2A \to 2B
  \}$
\item $\{  0 \leftrightarrows A, ~ 0 \leftrightarrows B, ~ 0 \leftrightarrows C, ~
A \to B+C \to 2A
  \}$
\item $\{  0 \leftrightarrows A, ~ 0 \leftrightarrows B, ~
A+B \to 2A,~
A \to 2B
\}$
\enen
Thus, a CFSTR with two non-flow reactions (irreversible or reversible) admits multiple nondegenerate positive steady states if and only if it contains one of the eleven networks above as an embedded network.
\end{theorem}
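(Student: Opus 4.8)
The plan is to carry out an exhaustive enumeration, exploiting the fact that a CFSTR atom with two non-flow reactions is the fully open extension $\widetilde{G^{\circ}}$ of its non-flow subnetwork $G^{\circ}=\{y_1\to y_1',\ y_2\to y_2'\}$, so that the entire problem reduces to classifying the admissible $G^{\circ}$ up to relabeling of species. First I would set up the search space: each of $y_1,y_1',y_2,y_2'$ ranges over the at-most-bimolecular complexes ($0$, $X_i$, $2X_i$, or $X_i+X_j$), and I would bound the number of species. Since an atom must be minimal, every species must be genuinely used; combining the relevance conditions of Definition~\ref{def:relevant} (each species occurs in at least two complexes and at least one reactant, and some species has total molecularity at least $3$, in the sense of Definition~\ref{def:tm}) with at-most-bimolecularity and the presence of only two reactions, I would argue that such an atom involves at most three species. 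This renders the list of candidate $G^{\circ}$ finite, and symmetry (permuting species, together with the reversibility bookkeeping of Definition~\ref{def:tm}) collapses it to a manageable enumeration.

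The core of the argument is then to decide, for each candidate, whether $\widetilde{G^{\circ}}$ is multistationary, and this is exactly where Theorems~\ref{thm:inj-cfstr} and~\ref{thm:det-opt} do the work. To rule out multistationarity I would apply the injectivity criterion in its reduced form (item~4 of Theorem~\ref{thm:inj-cfstr}): because $G^{\circ}$ has only two reactions, its square embedded networks have at most two reactions and hence at most two species, so it suffices to compute the orientation of the finitely many one- and two-reaction relevant square embedded networks of $G^{\circ}$; if all are non-negative, the candidate passes injectivity and is discarded. To rule in multistationarity, for a candidate possessing a negatively oriented relevant square embedded network I would invoke the determinant optimization method (Theorem~\ref{thm:det-opt}), testing whether some positive combination of the negative reaction vectors $y_i-y_i'$ of that network is strictly positive; when it is, $\widetilde{G^{\circ}}$ is multistationary.

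Two loose ends must then be tied off. The first is the minimality condition of Definition~\ref{def:atom}: a surviving multistationary candidate is an atom only if no proper fully open embedded network of it is multistationary. Deleting one non-flow reaction leaves a CFSTR with a single at-most-bimolecular non-flow reaction, which by Corollary~\ref{cor:cfstr-1-rxn} is never multistationary, so that direction is automatic; deleting a species yields a strictly smaller candidate already adjudicated within the same enumeration, so minimality is settled by bookkeeping. The second is the final biconditional: the forward direction follows from the lifting results of Theorem~\ref{thm:embedding-mss}, since each of the eleven networks is itself multistationary and multistationarity lifts to any larger network containing it as an embedded network; the reverse direction is precisely the claim that the eleven networks form a \emph{complete} list of the atoms, so that every multistationary two-reaction CFSTR must contain one of them.

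The main obstacle, I expect, is twofold and concentrated in the enumeration. First is completeness: rigorously establishing the species bound and then checking, under the symmetry reduction, that no case is overlooked, so that the list of eleven is genuinely exhaustive. Second, and more delicate, are the borderline candidates that fail injectivity (they possess a negatively oriented relevant square embedded network) yet violate the positivity hypothesis of Theorem~\ref{thm:det-opt} -- failing injectivity is only necessary, not sufficient, for multistationarity. For instance $\{0\leftrightarrows A,\ A\to 2A\}$ has the negatively oriented relevant square embedded network $\{A\to 2A\}$ but is \emph{not} multistationary by Theorem~\ref{thm:onerxn}. For such candidates neither headline criterion decides the question, and I would fall back on a direct analysis of the steady-state equations, on the deficiency theory of Theorems~\ref{thm:def-0}--\ref{thm:def-1}, or on the deficiency algorithms, to settle multistationarity network by network.
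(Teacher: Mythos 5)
First, a point of reference: this survey contains no proof of Theorem~\ref{thm:2-rxn} --- the result is quoted from~\cite{joshi2012atoms} --- so your proposal can only be compared against the strategy of that cited paper, which, like yours, is a symmetry-reduced exhaustive enumeration of two-reaction non-flow subnetworks, with injectivity used to preclude multistationarity. At that level your architecture is right. The first genuine gap is nondegeneracy. Your ``rule in'' step relies on Theorem~\ref{thm:det-opt}, but that theorem delivers only multistationarity in the sense of two positive steady states; it says nothing about nondegeneracy. Both Definition~\ref{def:atom} (an atom must itself admit multiple \emph{nondegenerate} positive steady states) and the forward direction of the biconditional (which invokes Theorem~\ref{thm:embedding-mss}, a lifting result that applies only to nondegenerate steady states) require the stronger property. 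This is not a pedantic distinction: the survey's own Conjecture~\ref{conj} exists precisely because Lemma~\ref{lem:Kn-for-n-odd} proves multistationarity of $\widetilde K_{m,n}$ via Theorem~\ref{thm:det-opt}, and yet the authors cannot conclude these networks are atoms --- Theorem~\ref{thm:Kn-cfstr-atom} remains conditional --- without nondegeneracy. Closing this gap requires, for each of the eleven networks, explicit certificates: rate constants, two positive steady states, and a verification of the Jacobian rank condition (which is how the cited paper proceeds). Your fallback of ``direct analysis'' is reserved only for candidates where the determinant method is inconclusive, so as written the eleven claimed atoms are never shown to be atoms at all.

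The second gap is exhaustiveness of the enumeration, which is the entire content of the ``only if'' direction. You bound the search to at most three species by imposing the relevance conditions of Definition~\ref{def:relevant} on the atom candidates themselves. But relevance is a property of the square embedded networks used in the injectivity test of Theorem~\ref{thm:inj-cfstr}, not a property that atoms must satisfy, and indeed several networks on the list violate it: networks 2 and 3 contain reversible pairs, and in networks 3 and 10 the species $B$ and $C$ each appear in only one complex of the non-flow subnetwork. So your premise that every species of an atom satisfies the relevance constraints is false, and the reduction to three species does not follow from it. A priori, two at-most-bimolecular reactions can involve many more species, and candidates with a fourth species cannot be dismissed out of hand, because an ``extra'' species appearing only in a product complex can be exactly what makes the positivity hypothesis of Theorem~\ref{thm:det-opt} hold --- this is the role $C$ plays in network 3. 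A correct proof needs an honest bound on the species and complexes of a minimal multistationary fully open network with two non-flow reactions (or an enumeration over the full, larger candidate set), followed by the case-by-case adjudication you describe.
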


\begin{remark} The first network among the listed two-reaction CFSTR atoms $\{  0 \leftrightarrows A, ~ 0 \leftrightarrows B, ~
A \to 2A,~
A+B \to 0
  \}$, along with the one-reaction atom $\{0 \leftrightarrow  A, ~ 2A \to 3A \}$ have received close examination in the condensed matter theory literature because they exemplify important principles regarding multistationarity and systems that undergo phase transitions \cite{grassberger1982phase,schlogl1972chemical}. 
\end{remark}

\section{Existence of infinitely many embedding-minimal multistationary networks} \label{sec:infinitely}
Corollary~\ref{cor:cfstr-1-rxn} demonstrated that there are infinitely many 
CFSTR atoms of multistationarity with one non-flow reaction.  In Section~\ref{subsec:2-fam}, we demonstrate that all the CFSTR atoms listed in that result are in fact embedding-minimal multistationary networks (Theorem~\ref{thm:infin}), showing that there are infinitely many such networks.
The molecularities of the complexes in those networks are arbitrarily large, which is unrealistic, so Section~\ref{subsec:seq} introduces a family of ``sequestration networks'' which are conjectured in Section~\ref{subsec:seq-infin} to form an infinite family of networks with at-most-bimolecular complexes that are both CFSTR atoms and embedding-minimal multistationary networks.

\subsection{New result: two infinite families with one non-flow reaction} \label{subsec:2-fam}
Here we see that even among networks with only one species, there exist embedding-minimal multistationary networks.  In fact, every CFSTR atom with only one non-flow reaction (enumerated earlier in Corollary~\ref{cor:cfstr-1-rxn}) is an embedding-minimal multistationary network:

\begin{theorem} \label{thm:infin}
For positive integers $m$ and $n$, consider the following networks: $G_{m,n} := \{0 \leftrightarrow  A,~ mA \to nA \}$ and 
$H_{m,n} := \{0 \leftrightarrow  A, ~0 \leftrightarrow  B,~ A + B \to mA + nB \}$. Then 
	\been
	\item $G_{m,n}$ is a CFSTR atom of multistationarity and an embedding-minimal multistationary network if and only if $ n > m > 1$. 
	\item $H_{m,n}$ is a CFSTR atom of multistationarity and an embedding-minimal multistationary network if and only if $ m>1$ and $n  > 1$.
	\enen
\end{theorem}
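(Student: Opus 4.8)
The plan is to treat the two claimed equivalences separately, and within each to split the ``CFSTR atom'' assertion from the (new) ``embedding-minimal'' assertion. The ``CFSTR atom of multistationarity'' half of both statements is precisely the content of Corollary~\ref{cor:cfstr-1-rxn}, so nothing new is needed there; in particular both ``only if'' directions follow immediately, since by Theorem~\ref{thm:onerxn} a network of type $G_{m,n}$ or $H_{m,n}$ is multistationary (a prerequisite for being either a CFSTR atom or embedding-minimal) exactly when $n>m>1$, respectively $m,n>1$. It therefore remains to prove the ``if'' direction of the embedding-minimality claim: assuming the relevant inequalities, no proper embedded network of $G_{m,n}$ (resp.\ $H_{m,n}$) is multistationary. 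The key organizing observation is that a CFSTR atom is by definition minimal among fully open networks, so Corollary~\ref{cor:cfstr-1-rxn} already guarantees that no proper embedded \emph{fully open} network is multistationary. Hence the only proper embedded networks left to examine are those that fail to be fully open, and I would dispatch each by an elementary analysis of its mass-action ODEs, showing it admits at most one positive steady state (which suffices, since multistationarity requires two or more).

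For $G_{m,n}$ (one species $A$, with $n>m>1$) I would simply enumerate the proper embedded networks. Writing the mass-action right-hand side of the full network as $\alpha-\beta x+(n-m)\kappa x^{m}$, removing the non-flow reaction leaves an affine function with a single root; removing the inflow $0\to A$ leaves $x\bigl((n-m)\kappa x^{m-1}-\beta\bigr)$, whose only positive root solves $x^{m-1}=\beta/((n-m)\kappa)$ and is therefore unique (here $m-1\ge 1$, so $t\mapsto t^{m-1}$ is strictly increasing on the positive reals); and removing the outflow $A\to 0$, so that the right-hand side is $\alpha+(n-m)\kappa x^{m}>0$ for $x>0$ using $n>m$, or removing both flows, leaves no positive steady state at all. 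Thus every proper embedded network has at most one positive steady state.

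For $H_{m,n}$ (species $A,B$, with $m,n>1$) the proper embedded non-fully-open networks fall into three families. First, those obtained by deleting the single non-flow reaction are subnetworks of $\{0\leftrightarrows A,\,0\leftrightarrows B\}$, whose ODEs are affine and hence admit at most one steady state. Second, those obtained by deleting a species: after zeroing a coordinate the reaction $A+B\to mA+nB$ becomes monomolecular (e.g.\ $B\to nB$), so the reduced one-species system again has an affine right-hand side and at most one positive steady state. Third, and genuinely new, those retaining both species and the reaction $A+B\to mA+nB$ but missing at least one flow reaction. Here I would write, with $w:=\kappa x_A x_B$, the balances $\dot x_A=(\mathrm{in}_A)-(\mathrm{out}_A)+(m-1)w$ and $\dot x_B=(\mathrm{in}_B)-(\mathrm{out}_B)+(n-1)w$, and argue by cases: if either outflow is absent, the corresponding balance is strictly positive on the open orthant, since $m-1,n-1>0$, so there is no positive steady state; and if both outflows are present but some inflow is absent, setting $\dot x_A=0$ forces $x_B$ to a single value, after which $\dot x_B=0$ is linear in $x_A$ and fixes $x_A$ uniquely, again yielding at most one positive steady state. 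The $A\leftrightarrow B$, $m\leftrightarrow n$ symmetry halves the casework.

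I expect the only real bookkeeping to be in this third family for $H_{m,n}$, but each individual case reduces to solving a one-variable equation, so no case is hard; the main point is simply to organize the subnetworks so that every one is covered, and to exploit the reduction above so that the fully open embedded networks need no separate treatment. An alternative to the direct ODE analysis would be to invoke the general injectivity criterion, namely item~3 of Theorem~\ref{thm:inj} or equivalently the $(\dim\St)$-square-embedded-network reformulation stated just after Theorem~\ref{thm:inj-cfstr}, for each non-fully-open subnetwork; but the elementary computation above seems more transparent and self-contained.
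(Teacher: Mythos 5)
Your proposal is correct and takes essentially the same route as the paper's own proof: both reduce the multistationarity and atom claims to Theorem~\ref{thm:onerxn} and Corollary~\ref{cor:cfstr-1-rxn}, and then establish embedding-minimality by the same elementary case analysis of the mass-action ODEs of each proper embedded network (species removal yields affine steady-state equations; a missing outflow forces $\dot a>0$ or $\dot b>0$ on the positive orthant; a missing inflow or missing non-flow reaction gives an explicitly solvable system with at most one positive root). The only cosmetic difference is that you invoke the CFSTR-atom property to exclude the fully open proper embedded networks up front, whereas the paper simply handles all proper embedded networks directly in the same casework.
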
 
\begin{proof} That $G_{m,n}$ is multistationary if and only if $ n > m > 1$ follows from part 1 of Theorem~\ref{thm:onerxn}.  Additionally, those networks are CFSTR atoms of multistationarity (Corollary~\ref{cor:cfstr-1-rxn}). Thus, it remains only to show that no proper embedded network of  $G_{m,n}$ (when $n>m>1$) is multistationary.  We begin by letting  $s$, $l$, and $k$ denote the rates of the reactions $0 \to A$, $A \to 0$, and $mA \to nA$, respectively. 
Then, the single mass-action ODE~\eqref{eq:ODE-mass-action} arising from $G_{m,n}$ is $\frac{da}{dt} = s - la + (n-m)ka^m$, where $a$ denotes the concentration of the species $A$; thus the number of positive steady states is the number of positive real roots of the univariate polynomial 
\begin{align} \label{eq:poly-for-G}
s - la + (n-m)ka^m~.
\end{align}
The only nontrivial embedded networks of $G_{m,n}$ arise from removing reactions (as $G_{m,n}$ has only one species), i.e.\ setting one more of the reaction rates $s$, $l$, or $k$ to zero.  It is straightforward to see that this results in the polynomial~\eqref{eq:poly-for-G} having at most one positive real root.  

By analogous reasoning for $H_{m,n}$, we need only show that no proper embedded network of  $H_{m,n}$ (when $m,n>1$) is multistationary.  We begin by denoting the rates of reactions $0 \to A$, $A \to 0$, $0 \to B$, $B \to 0$, and $A+B \to mA + nB$ by $s_A$, $l_A$, $s_B$, $l_B$, and $k$, respectively. Letting $a$ and $b$ represent concentrations of species $A$ and $B$, respectively, the mass-action mass-action ODEs~\eqref{eq:ODE-mass-action} are the following:
\begin{align*}
\frac{da}{dt} &= s_A - l_A a + (m-1)kab \\
\frac{db}{dt} &= s_B - l_B b + (n-1)kab
\end{align*}
Now let $N$ be an embedded network of $H_{m,n}$.  We first consider the case when at least one species (and possibly some reactions as well) are removed to obtain $N$.

\noindent (Species-removal case) If both species $A$ and $B$ are removed to obtain $N$, then $N$ is trivial.  So, assume without loss of generality that species $B$ is removed. Thus, $N$ is a subnetwork of $\{0 \leftrightarrow  A  \to mA\}$, so its steady state equation is a linear equation in $a$, and thus cannot have multiple solutions. So, we may assume that no species are removed from $H_{m,n}$ to obtain $N$, i.e.\ $N$ is a subnetwork of $H_{m,n}$.

\noindent (Reaction-removal case)  
We must show that if $N$ is a proper subnetwork of $H_{m,n}$, i.e.\ if one or more rate constants is set to zero, then $N$ is not multistationary. If one of the outflow rates $l_A$ or $l_B$ is zero, then there are no steady states as either $\dot a >0$ or $\dot b >0$ for all $(a,b)$ in the positive orthant. On the other hand, if $k=0$ or one of the inflow rates $s_A$ or $s_B$ is zero, then the steady state equations system can be solved explicitly, and there is at most one positive solution. 
%
\end{proof}

\begin{remark}
For the values of $m$ and $n$ for which $G_{m,n}$ or $H_{m,n}$ is multistationary, the parameter space (for the rate constants) for which there exist multiple positive steady states is identified in \cite[Lemmas 4.3 and 4.5]{joshi2013complete}.  
\end{remark}

The networks  $G_{m,n}$ and $H_{m,n}$ have only one non-flow irreversible reaction, and only one species (in the case of $G_{m,n}$) or two species ($H_{m,n}$). Both form infinite families, so the molecularities of the chemical species are unbounded. It may be argued that this is an unnatural property, and molecularity in realistic models must be small. In particular, if we restrict complexes to be at most bimolecular, then by Corollary~\ref{cor:cfstr-1-rxn} there are no CFSTR atoms of multistationarity that contain only one non-flow reaction (reversible or irreversible). 

In Section~\ref{subsec:seq-infin}, we will show that even with the bimolecular restriction there are infinitely many CFSTR atoms of multistationarity. Since molecularity is bounded, the numbers of reactions and species will be unbounded in this family. The demonstration is by explicit construction -- using certain ``sequestration networks'' introduced next. 

\subsection{Sequestration networks} \label{subsec:seq}
 This subsection is a slight detour, in which we introduce sequestration networks, and apply many of our earlier results to study them.
We will call reactions of the type $S+E \to 0$ {\em sequestration reactions}. Here we view $S$ as a substrate that binds with an enzyme $E$ and is then sequestered or rendered non-reactive. The reaction might be more accurately modeled as $S+E \to SE$, where $SE$ represents the substrate-enzyme complex. However, we assume that the complex dissociates very rarely, and does not participate in any other reaction within the network, either because the complex is inert or because the complex leaves the reaction vessel through some unspecified mechanism. In such an event, it is reasonable to model the reaction, for simplicity and without changing the dynamics, as $S+E \to 0$. 

Let us describe an instance of a sequestration reaction in {\em Escherichia coli}.  The trp operon is a sequence of five genes that codes for the amino acid tryptophan. The regulatory protein, called trp repressor, can bind in the presence of tryptophan to the operator site of the trp operon and prevent its transcription. Thus, the presence of tryptophan inhibits its own production in E.~coli \cite{santillan2001dynamic} -- we can model the process as a sequestration reaction with $E$ representing tryptophan and $S$ representing trp operon. 

An even more compelling example is found in neuron signaling mechanisms. Communication between a pair of neurons occurs via the medium of neurotransmitters released by the presynaptic neuron into the synaptic cleft. The signal is terminated by the action of  neurotransporters, which are neurotransmitter reuptake proteins that bind to neurotransmitter molecules and mediate their removal from the synaptic cleft \cite{lesch1995neurotransmitter}. In this case, $E$ represents neurotransporters and $S$ represents neurotransmitters. In general, even though an individual sequestration reaction is symmetric in $S$ and $E$, their distinct chemical roles are revealed when considered within a network. 

Next we define a
sequestration network, in which a {\em synthesis reaction} of the type $X_1 \to m X_n$ is coupled with $n-1$ sequestration reactions. When $n$ is even, the sequestration reactions generate negative feedback to the synthesis reaction, while when $n$ is odd, the feedback to the synthesis reaction is positive. We will establish that this positive feedback results in the capacity for multiple steady states (Theorem~\ref{thm:seques-net-mss}).
\begin{definition} \label{def:seq}
For positive integers $n \ge 2, m \ge 1$, we define the {\em sequestration network} $K_{m,n}$ {\em of order $n$ with production factor $m$} to be:
\begin{align}
X_1 &\to mX_n \nonumber \tag{6.2.1} \label{eq:Kn1}\\
X_1 + X_2 &\to 0  \nonumber \tag{6.2.2} \label{eq:Kn2}\\
\vdots \nonumber \\
X_{n-1} + X_n &\to 0 \nonumber \tag{6.2.n} \label{eq:Knn}
\end{align}
\end{definition}
The sequestration network $K_{m,n}$ has $n$ species, $n$ (non-flow) reactions, and for $1 \le m \le 2$, its complexes are at most bimolecular.  Also, 
$K_{2,n}$ is an embedded network of, and is inspired by, a reaction network analyzed by Schlosser and Feinberg~\cite[Table 1]{SchlosserFeinberg}.  

The following result classifies the multistationarity of the fully open extensions of the $K_{m,n}$'s; it will follow immediately from Lemmas~\ref{lem:m=1} and~\ref{lem:Kn-for-n-odd} and Theorem~\ref{thm:even-n-no-MSS}.

\begin{theorem} \label{thm:seques-net-mss} 
For positive integers $n \ge 2$ and $m \ge 1$, the fully open extension $\widetilde K_{m,n}$ of the sequestration network $ K_{m,n}$ is multistationary if and only if $m > 1$ and $n$ is odd.  
\end{theorem}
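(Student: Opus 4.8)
\noindent The plan is to decide multistationarity of $\widetilde K_{m,n}$ entirely through the orientations of the square embedded networks of its non-flow subnetwork $G^\circ = K_{m,n}$, using Theorem~\ref{thm:inj-cfstr} to preclude and Theorem~\ref{thm:det-opt} to guarantee multiple steady states. This is available because $\widetilde K_{m,n}$ is fully open, hence a CFSTR, with $\St = \R^n$ and $\dim \St = n$. The crucial structural feature is that $K_{m,n}$ already has exactly $n$ species and $n$ reactions, so it is itself a square network of the relevant size $s=n$. I would split into three cases whose union yields the ``if and only if'': (i) $m=1$, (ii) $n$ even, and (iii) $m>1$ with $n$ odd, where the first two give no multistationarity and the third gives multistationarity.

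First I would determine all \emph{relevant} square embedded networks of $K_{m,n}$, which by part~4 of Theorem~\ref{thm:inj-cfstr} are the only ones that matter. Writing $e_i$ for the $i$-th standard basis vector, I view the reactions as edges of a graph on the vertex set $\{X_1,\dots,X_n\}$: each sequestration $X_i+X_{i+1}\to 0$ is the edge $\{X_i,X_{i+1}\}$ and the synthesis $X_1\to mX_n$ is the edge $\{X_1,X_n\}$, so together they form the $n$-cycle $X_1 - X_2 - \cdots - X_n - X_1$. Relevance forbids outflows and generalized inflows, which forces every surviving reaction to retain \emph{both} of its species: removing one species from any reaction produces an outflow $X_i\to 0$ or a generalized inflow $0\to mX_n$. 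Hence a relevant square embedded network corresponds to a subgraph of the $n$-cycle with no isolated vertices and with equally many edges and vertices. A subgraph of a cycle with no isolated vertices is a disjoint union of arcs, and such a subgraph has as many edges as vertices only when it is the whole cycle. Therefore the only relevant square embedded network is $K_{m,n}$ itself, and it is relevant precisely when $m\ge 2$ (for $m=1$ every species has total molecularity $2$, so the last relevance condition fails).

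Next I would compute ${\rm Or}(K_{m,n}) = \det([y_1,\dots,y_n])\,\det([y_1-y_1',\dots,y_n-y_n'])$. Ordering the reactions as synthesis, $\mathrm{seq}_1,\dots,\mathrm{seq}_{n-1}$, the reactant matrix $[e_1,\,e_1+e_2,\,e_2+e_3,\dots,e_{n-1}+e_n]$ is upper triangular with unit diagonal, so its determinant is $1$; the negative-reaction-vector matrix $[e_1-me_n,\,e_1+e_2,\dots,e_{n-1}+e_n]$ differs only in its first column, and a cofactor expansion along that column gives determinant $1+(-1)^n m$. Thus ${\rm Or}(K_{m,n}) = 1+(-1)^n m$. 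For $n$ even this equals $1+m>0$, and for $m=1$ it equals $1+(-1)^n\ge 0$; in either case the unique (candidate) relevant square embedded network has non-negative orientation, so by Theorem~\ref{thm:inj-cfstr} the CFSTR $\widetilde K_{m,n}$ is injective and hence not multistationary, settling cases~(i) and~(ii).

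For case~(iii), $n$ odd and $m>1$, the orientation is $1-m<0$, so $K_{m,n}$ is a negatively oriented square embedded network of $G^\circ$ of size $s=n$. To apply Theorem~\ref{thm:det-opt} it remains to exhibit $\eta_0,\eta_1,\dots,\eta_{n-1}>0$ making $\eta_0(e_1-me_n)+\sum_{i=1}^{n-1}\eta_i(e_i+e_{i+1})$ strictly positive; every coordinate except the last is automatically a positive sum of the $\eta_i$, and the $n$-th coordinate $\eta_{n-1}-m\eta_0$ is positive as soon as $\eta_{n-1}>m\eta_0$. Then Theorem~\ref{thm:det-opt} certifies that $\widetilde K_{m,n}$ is multistationary. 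I expect the determinant evaluation and the positivity check to be routine; the main obstacle is the combinatorial claim that $K_{m,n}$ is the only relevant square embedded network, which relies on the cycle structure of the reactions and on carefully using relevance to rule out all proper sub-networks, in particular the species-removal cases that turn sequestrations into outflows.
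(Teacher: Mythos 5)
Your proposal is correct and follows essentially the same route as the paper: the paper likewise reduces everything to Theorems~\ref{thm:inj-cfstr} and~\ref{thm:det-opt}, proves that $K_{m,n}$ has no proper relevant square embedded network (Lemma~\ref{lemma:rel_sen}), computes the orientation $1+(-1)^n m$ (Lemma~\ref{lemma:negor}), and for odd $n$ exhibits the same kind of positive combination of negative reaction vectors (the paper takes coefficients $(1,\dots,1,m+1)$). The only cosmetic difference is your counting argument for the key combinatorial lemma (a proper subgraph of the $n$-cycle with no isolated vertices is a disjoint union of paths, hence has more vertices than edges, so squareness forces the whole cycle), where the paper instead propagates membership of species and reactions sequentially around the cycle; both arguments rest on the same relevance conditions.
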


Our analysis of the sequestration networks begins with the following results.
\begin{lemma} \label{lem:m=1}
For any positive integer $n \ge 2$, if $m=1$, then the fully open extension $\widetilde K_{m,n}$ of the sequestration network $K_{m,n}$ is not multistationary. 
\end{lemma}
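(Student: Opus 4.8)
The plan is to prove the non-multistationarity of $\widetilde K_{1,n}$ by showing that it passes the injectivity criterion, using the CFSTR version in Theorem~\ref{thm:inj-cfstr}. Since $\widetilde K_{1,n}$ is the fully open extension of $K_{1,n}$, its non-flow subnetwork is $G^{\circ} = K_{1,n}$, and by the equivalence of conditions (1) and (4) of that theorem it suffices to verify that every \emph{relevant} square embedded network of $K_{1,n}$ has non-negative orientation. I expect to establish the stronger statement that $K_{1,n}$ has no relevant square embedded networks at all, so that condition (4) holds vacuously.

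First I would compute the total molecularity (Definition~\ref{def:tm}) of every species in $K_{1,n}$ when $m=1$. The reactant complexes are $X_1$ (from the synthesis reaction $X_1 \to X_n$) together with $X_i + X_{i+1}$ for $i = 1, \ldots, n-1$, and the only nonzero product coefficient is that of $X_n$ in the synthesis reaction. A direct count then shows $\operatorname{TM}(X_j) = 2$ for every $j$: each interior species $X_j$ (for $2 \le j \le n-1$) appears once as a reactant in $X_{j-1}+X_j \to 0$ and once in $X_j + X_{j+1}\to 0$; the species $X_1$ appears once in the synthesis reactant and once in $X_1 + X_2 \to 0$; and $X_n$ appears once as the synthesis product and once in $X_{n-1}+X_n \to 0$.

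Next I would observe that total molecularity cannot increase under passage to an embedded network: removing a species sets its stoichiometric coefficients to zero, and discarding trivial or duplicate reactions only deletes terms, so each species of any embedded network $N$ of $K_{1,n}$ has total molecularity at most its value in $K_{1,n}$, hence at most $2$. The definition of relevance (Definition~\ref{def:relevant}) requires at least one species with total molecularity at least $3$; since no embedded network of $K_{1,n}$ contains such a species, none of them is relevant. In particular $K_{1,n}$ has no relevant square embedded networks, condition (4) of Theorem~\ref{thm:inj-cfstr} is vacuously satisfied, and therefore $\widetilde K_{1,n}$ passes the injectivity criterion and is not multistationary.

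The argument involves no genuine obstacle---the work is entirely in the bookkeeping of total molecularities---so the only point requiring care is the monotonicity of $\operatorname{TM}$ under the embedding operation, in particular checking that discarding trivial reactions (which can occur once several species have been removed) never creates a complex of higher molecularity. This is immediate, since every complex appearing in an embedded network is obtained from a complex of $K_{1,n}$ by zeroing out some coordinates.
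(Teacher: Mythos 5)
Your proof is correct and follows essentially the same route as the paper: both arguments observe that when $m=1$ every species of $K_{1,n}$ has total molecularity $2$, conclude that $K_{1,n}$ has no relevant square embedded networks, and then invoke Theorem~\ref{thm:inj-cfstr} to rule out multistationarity. Your write-up merely makes explicit a step the paper leaves implicit (that total molecularity cannot increase when passing to an embedded network), which is a worthwhile clarification but not a different argument.
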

\begin{proof}
For all $m \ge 1$, the maximum total molecularity of any species in $K_{m,n}$ is $m+1$. Thus, if $m=1$, then the maximum total molecularity is 2, so by definition $K_{m,n}$ has no relevant square embedded networks. Therefore, by Theorem \ref{thm:inj-cfstr} the network does not admit multiple steady states. 
\end{proof}

\begin{lemma} \label{lemma:rel_sen}
For positive integers $n \ge 2$ and $m \ge 1$, the sequestration network $K_{m,n}$ has no proper, relevant square embedded network. 
\end{lemma}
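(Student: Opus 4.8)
The plan is to argue by contradiction: suppose $N$ is a proper, relevant square embedded network of $K_{m,n}$. The key preliminary observation I would record is that \emph{total molecularity can only decrease under passage to an embedded network}: restricting to a subset of species only lowers or preserves stoichiometric coefficients, and discarding trivial or duplicate reactions only deletes terms from the sum in Definition~\ref{def:tm}. A direct computation in $K_{m,n}$ gives $\operatorname{TM}(X_i)=2$ for $1\le i\le n-1$ and $\operatorname{TM}(X_n)=m+1$, so the only species that can possibly carry total molecularity at least $3$ in $N$ is $X_n$, and only when $m\ge 2$. Since relevance (Definition~\ref{def:relevant}) requires some species of $N$ to have total molecularity at least $3$, I conclude that $X_n\in N$ and that the synthesis reaction $X_1\to mX_n$ survives in $N$; indeed the sequestration reaction $X_{n-1}+X_n\to 0$ contributes only $1$ to $\operatorname{TM}(X_n)$, so it cannot by itself reach $3$, forcing the synthesis reaction to be present.

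Next I would run a forcing argument down the chain of sequestration reactions. Because $X_1\to mX_n$ lies in $N$, removing $X_1$ would turn it into the generalized inflow $0\to mX_n$, which is forbidden for a relevant network; hence $X_1\in N$. I then induct on $i$ from $1$ to $n-1$ under the hypothesis that $X_i\in N$ and that the preceding reaction $r_{i-1}$ lies in $N$, where $r_0:=X_1\to mX_n$ and $r_i:=X_i+X_{i+1}\to 0$ for $i\ge 1$. In $K_{m,n}$ the species $X_i$ occurs only in the reactants of $r_{i-1}$ and $r_i$; since relevance demands that $X_i$ appear in at least two distinct complexes of $N$, the reaction $r_i$ must survive. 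If $X_{i+1}$ were removed, then $r_i$ would degenerate to the outflow $X_i\to 0$, which is again forbidden; therefore $X_{i+1}\in N$ and $r_i\in N$, closing the induction.

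Carrying the induction through $i=n-1$ shows that every species $X_1,\dots,X_n$ and every reaction of $K_{m,n}$ lies in $N$, so $N=K_{m,n}$, contradicting the hypothesis that $N$ is proper. In fact this argument establishes the stronger statement that $K_{m,n}$ has \emph{no} proper relevant embedded network at all, square or not, which immediately yields the lemma. The step I expect to require the most care is the inductive forcing: one must track precisely how the embedded-network operations of Definition~\ref{def:emb} — restricting complexes to the surviving species and then discarding trivial and duplicate reactions — interact with the two relevance conditions that $N$ have no outflows and that each species appear in at least two complexes. Pinning down the base case (ruling out the generalized inflow $0\to mX_n$ and distinguishing the two complexes $\{X_1\}$ and $\{X_1+X_2\}$ of $X_1$) and verifying that no alternative second complex for $X_i$ can appear without introducing a forbidden outflow is where the real content lies.
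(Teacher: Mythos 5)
Your proof is correct and takes essentially the same approach as the paper: a forcing-chain argument in which the relevance conditions (total molecularity at least $3$, no generalized inflows, no outflows, each species in at least two complexes) successively compel every species $X_1,\dots,X_n$ and every reaction of $K_{m,n}$ to survive in $N$, so $N=K_{m,n}$. The only cosmetic difference is in the base case --- you force the synthesis reaction via the total-molecularity bound on $X_n$, while the paper forces both reactions \eqref{eq:Kn1} and \eqref{eq:Knn} via the two-complexes condition on $X_n$ --- and, like the paper's argument, yours in fact never uses squareness, so both establish the stronger claim that $K_{m,n}$ has no proper relevant embedded network at all.
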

\begin{proof}
Let $N$ be a relevant SEN of $K_{m,n}$. Then $N$ must contain the species $X_n$,  because otherwise the maximum total molecularity of $N$ is less than three and $N$ must contain the reactions \eqref{eq:Kn1} and \eqref{eq:Knn} because $X_n$ must appear in at least two complexes. $N$ must contain $X_1$ because otherwise \eqref{eq:Kn1} reduces to a generalized inflow reaction, which cannot occur in a relevant SEN.  This further implies that $N$ must contain \eqref{eq:Kn2} because $X_1$ must occur in at least two complexes, which further implies that $X_2$ must be in $N$, because otherwise \eqref{eq:Kn2} reduces to an outflow reaction. Continuing this process, we find that the species $X_1, \ldots, X_n$ must be contained in $N$ and the reactions \eqref{eq:Kn1} to \eqref{eq:Knn} must be in $N$. Thus $N=K_{m,n}$ is not a proper SEN of $K_{m,n}$.
%
%
\end{proof}
\begin{lemma} \label{lemma:negor}
For positive integers $n \ge 2$ and $m \ge 1$, the sequestration network $K_{m,n}$ is negatively oriented if and only if $m \ge 2$ and $n$ is odd. 
\end{lemma}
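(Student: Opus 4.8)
The plan is to compute the orientation ${\rm Or}(K_{m,n})$ directly from Definition~\ref{def:square-or} as a product of two $n \times n$ determinants, and then read off its sign. First I would fix an indexing of the $n$ reactions so that reaction $1$ is the synthesis reaction $X_1 \to m X_n$ and reaction $i$ (for $2 \le i \le n$) is the sequestration reaction $X_{i-1} + X_i \to 0$. Writing $e_1, \dots, e_n$ for the standard basis of $\mathbb{R}^n$, the reactant vectors are then $y_1 = e_1$ and $y_i = e_{i-1} + e_i$ for $i \ge 2$, while the product vectors are $y_1' = m e_n$ and $y_i' = 0$ for $i \ge 2$. Consequently the negative reaction vectors are $y_1 - y_1' = e_1 - m e_n$ and $y_i - y_i' = e_{i-1} + e_i$ for $i \ge 2$.

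Next I would evaluate $\det([y_1, \dots, y_n])$. In the matrix whose $i$-th column is $y_i$, the entry in row $j$ (species $X_j$) is $1$ exactly when $j \in \{i-1, i\}$ (with the case $j=0$ vacuous), so the matrix is upper triangular with unit diagonal, and therefore $\det([y_1, \dots, y_n]) = 1$. The second matrix $[y_1 - y_1', \dots, y_n - y_n']$ agrees with the first in every column except the first, where $e_1$ has been replaced by $e_1 - m e_n$, i.e.\ the only change is that the $(n,1)$ entry becomes $-m$ instead of $0$. I would expand this determinant along its first column: only the entries in rows $1$ and $n$ are nonzero, and the two associated minors are again determinants of triangular matrices. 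Deleting row $1$ and column $1$ leaves the lower-right block, which is upper triangular with unit diagonal (determinant $1$), while deleting row $n$ and column $1$ leaves a block that is lower triangular with unit diagonal (determinant $1$). The cofactor expansion then gives
\[
\det([y_1 - y_1', \dots, y_n - y_n']) \;=\; 1 + (-m)(-1)^{n+1} \;=\; 1 + (-1)^n m.
\]

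Finally I would combine the two pieces. Since $\det([y_1, \dots, y_n]) = 1$, the orientation is
\[
{\rm Or}(K_{m,n}) \;=\; 1 + (-1)^n m.
\]
This quantity is negative precisely when $(-1)^n = -1$ and $m > 1$, i.e.\ when $n$ is odd and $m \ge 2$; for $n$ even its value is $1 + m \ge 2 > 0$, and for $n$ odd with $m = 1$ its value is $0$. This is exactly the claimed equivalence.

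The calculation is routine once the two matrices are set up correctly; the only step requiring genuine care — and hence the main (mild) obstacle — is correctly identifying the two cofactor minors and tracking the sign $(-1)^{n+1}$ in the cofactor expansion, since a single parity error would interchange the even and odd cases for $n$. I would guard against this by verifying the small cases explicitly, namely $n=3$ (yielding $1 - m$) and $n=4$ (yielding $1 + m$), which confirm the parity dependence.
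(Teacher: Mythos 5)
Your proposal is correct and follows essentially the same approach as the paper: both compute ${\rm Or}(K_{m,n})$ as the product of two determinants, observe that the reactant matrix is triangular with unit diagonal (determinant $1$), and expand the second determinant along the line containing the single $-m$ entry to obtain $1 + (-1)^n m$, which is negative exactly when $n$ is odd and $m \ge 2$. The only cosmetic difference is that you arrange reactant vectors as columns (per Definition~\ref{def:square-or}) while the paper uses rows, which is just a transpose and changes nothing.
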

\begin{proof}
Write $K_{m,n}$ informally as $A_n \to B_n$, where $A_n$ is the source matrix and $B_n$ is the product matrix.   Thus, the orientation ${\rm Or}(K_{m,n})$ is the sign of the following:
\[
 \det A_n \det (A_n - B_n) :=
\det 
\begin{pmatrix} 
1 & 0 & 0 & \ldots & 0 & 0 \\
1 & 1 & 0 & \ldots & 0 & 0 \\
0 & 1 & 1 & \ldots & 0 & 0 \\
\vdots & \vdots & \vdots & \ldots & \vdots &  \vdots \\
0 & 0 & 0 & \ldots & 1 & 1
\end{pmatrix}
\det 
\begin{pmatrix} 
1 & 0 & 0 & \ldots & 0 & -m \\
1 & 1 & 0 & \ldots & 0 & 0 \\
0 & 1 & 1 & \ldots & 0 & 0 \\
\vdots & \vdots & \vdots & \ldots & \vdots &  \vdots \\
0 & 0 & 0 & \ldots & 1 & 1
\end{pmatrix}
= 1 \cdot \left( 1 + (-1)^{n+1} (-m) \right)~,
\]
which is negative if and only if $n$ is odd and $m \ge 2$.
\end{proof}

\begin{theorem} \label{thm:even-n-no-MSS}
For positive integers $n \ge 2$ and $m \ge 2$, if $n$ is even, then the fully open extension $\widetilde K_{m,n}$ of the sequestration network $K_{m,n}$ is not multistationary. Furthermore, no sub-CFSTR (a subnetwork that is a CFSTR) of $\widetilde K_{m,n}$ is multistationary. 
\end{theorem}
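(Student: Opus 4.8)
The plan is to invoke the injectivity criterion in the form of Theorem~\ref{thm:inj-cfstr}, using the two structural facts already in hand: Lemma~\ref{lemma:rel_sen} (the network $K_{m,n}$ has no \emph{proper} relevant square embedded network) and Lemma~\ref{lemma:negor} (for $m \ge 2$ and even $n$, the orientation of $K_{m,n}$ is non-negative). Since $\widetilde K_{m,n}$ is a CFSTR whose non-flow subnetwork is exactly $K_{m,n}$, it suffices by the implication ``condition~4 $\Rightarrow$ condition~1'' of Theorem~\ref{thm:inj-cfstr} to verify that every relevant square embedded network (SEN) of $K_{m,n}$ has non-negative orientation.

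First I would settle the first assertion. By Lemma~\ref{lemma:rel_sen}, the only possible relevant SEN of $K_{m,n}$ is $K_{m,n}$ itself, since no proper one exists. The dichotomy is then clean: either $K_{m,n}$ is relevant, in which case its unique relevant SEN is non-negatively oriented for even $n$ by Lemma~\ref{lemma:negor}; or $K_{m,n}$ is not relevant, in which case it has \emph{no} relevant SEN and condition~4 of Theorem~\ref{thm:inj-cfstr} holds vacuously. In either case every relevant SEN of $(\widetilde K_{m,n})^{\circ}=K_{m,n}$ has non-negative orientation, so $\widetilde K_{m,n}$ passes the injectivity criterion and is therefore not multistationary.

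For the ``furthermore'' clause, let $G'$ be any sub-CFSTR of $\widetilde K_{m,n}$. Because the fully open extension adds only flow reactions, the non-flow reactions of $\widetilde K_{m,n}$ are precisely the reactions of $K_{m,n}$; hence the non-flow subnetwork $(G')^{\circ}$ is obtained from $K_{m,n}$ by deleting a subset of its reactions, i.e.\ $(G')^{\circ}$ is a subnetwork of $K_{m,n}$. By transitivity of the embedding relation (recall Remark~\ref{rmk:partial-order}), every square embedded network of $(G')^{\circ}$ is also a square embedded network of $K_{m,n}$, and since relevance is an intrinsic property of a network, every relevant SEN of $(G')^{\circ}$ is a relevant SEN of $K_{m,n}$. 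By Lemma~\ref{lemma:rel_sen} the only such network is $K_{m,n}$ itself; but a network with $n$ reactions cannot be embedded in one with strictly fewer reactions, so $(G')^{\circ}$ admits a relevant SEN only when $(G')^{\circ}=K_{m,n}$, in which case that SEN is non-negatively oriented by Lemma~\ref{lemma:negor} (even $n$). Thus in all cases each relevant SEN of $(G')^{\circ}$ is non-negatively oriented, and Theorem~\ref{thm:inj-cfstr} again gives that $G'$ is not multistationary. Taking $G'=\widetilde K_{m,n}$ recovers the first assertion, so the two parts could in fact be proved together.

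The individual steps are routine; the point that requires care is the bookkeeping in the last paragraph, namely confirming that passing to the non-flow subnetwork of an arbitrary sub-CFSTR cannot create a new relevant SEN that was not already an embedded network of $K_{m,n}$, and that deleting non-flow reactions can only shrink (never enlarge) the collection of relevant SENs. Once this monotonicity is secured, Lemma~\ref{lemma:negor} does the real work, and the even-$n$ hypothesis enters exactly through the sign computed there: for even $n$ the relevant product equals $1+m>0$, so the orientation is non-negative, whereas for odd $n$ it would be $1-m<0$.
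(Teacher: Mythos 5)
Your proof is correct and takes essentially the same route as the paper's: both combine Lemma~\ref{lemma:rel_sen} (no proper relevant SENs of $K_{m,n}$) with Lemma~\ref{lemma:negor} (non-negative orientation when $n$ is even) and conclude via the injectivity criterion of Theorem~\ref{thm:inj-cfstr}, handling the sub-CFSTR clause by observing that relevant SENs of the non-flow subnetwork of any sub-CFSTR are relevant SENs of $K_{m,n}$. Your write-up is merely more explicit than the paper's at two points -- the vacuous case where no relevant SEN exists, and the reaction-count argument showing a proper subnetwork of $K_{m,n}$ cannot have $K_{m,n}$ as an SEN -- but the underlying argument is identical.
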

\begin{proof} By Lemma \ref{lemma:rel_sen}, $K_{m,n}$ has no proper, relevant SENs.  Thus, the only relevant SEN, if $n$ is even, is $K_{m,n}$ itself, which is positively oriented by Lemma \ref{lemma:negor}. Thus, by Theorem \ref{thm:inj-cfstr}, $\widetilde K_{m,n}$ is not multistationary, nor is any subnetwork of $\widetilde K_{m,n}$ that is also a CFSTR (because relevant SENs of such a network are relevant SENs of $K_{m,n}$).
\end{proof}

In the next section, we show that $\widetilde K_{m,n}$ is multistationary when $n$ is an odd. 

\subsection{New result: an infinite family with complexes that are at most bimolecular} \label{subsec:seq-infin}
Here we show that the fully open extension of the sequestration network $K_{m,n}$, which has $n$ reactions and $n$ species, admits multiple positive steady states when $n$ is odd.
\begin{lemma} \label{lem:Kn-for-n-odd}
For positive integers $n \ge 2$ and $m \ge 2$, if $n$ is odd, then 
\begin{enumerate}
	\item $\widetilde K_{m,n}$ is a multistationary CFSTR, and 
	\item no fully open network that is an embedded network of  $\widetilde K_{m,n}$ (besides  $\widetilde K_{m,n}$ itself) is multistationary.
\end{enumerate}
\end{lemma}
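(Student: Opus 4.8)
The plan is to prove the two parts separately: for part~1 I would invoke the determinant optimization method (Theorem~\ref{thm:det-opt}), and for part~2 I would combine the injectivity criterion for CFSTRs (Theorem~\ref{thm:inj-cfstr}) with the fact that $K_{m,n}$ has no proper relevant square embedded network (Lemma~\ref{lemma:rel_sen}).

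For part~1, I would apply Theorem~\ref{thm:det-opt} to $G = \widetilde K_{m,n}$, whose non-flow subnetwork is $G^{\circ} = K_{m,n}$, viewing $K_{m,n}$ itself as a square embedded network of size $n$. By Lemma~\ref{lemma:negor}, this network is negatively oriented precisely because $m \ge 2$ and $n$ is odd, so the orientation hypothesis is met. It then remains to produce positive scalars $\eta_1, \dots, \eta_n$ with $\sum_{i=1}^n \eta_i (y_i - y_i') \in \mathbb{R}^n_{>0}$. The negative reaction vectors are $y_1 - y_1' = e_1 - m e_n$ (from $X_1 \to m X_n$) and $y_j - y_j' = e_{j-1} + e_j$ for $2 \le j \le n$ (from $X_{j-1} + X_j \to 0$). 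Summing, the coefficient of $e_k$ for $k < n$ is a sum of $\eta$'s and hence automatically positive, while the coefficient of $e_n$ equals $\eta_n - m\eta_1$. Thus the single binding constraint is $\eta_n > m\eta_1$, which is trivially satisfiable, and Theorem~\ref{thm:det-opt} yields that $\widetilde K_{m,n}$ is multistationary.

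For part~2, let $N$ be a fully open embedded network of $\widetilde K_{m,n}$ with $N \ne \widetilde K_{m,n}$; the goal is to show $N$ is not multistationary. Since $N$ is a CFSTR, by Theorem~\ref{thm:inj-cfstr} it suffices to show that every relevant square embedded network of its non-flow subnetwork $N^{\circ}$ has non-negative orientation. The key observation is that $N^{\circ}$ is itself an embedded network of $(\widetilde K_{m,n})^{\circ} = K_{m,n}$, so by transitivity every relevant square embedded network of $N^{\circ}$ is a relevant square embedded network of $K_{m,n}$. By Lemma~\ref{lemma:rel_sen}, the only such network is $K_{m,n}$ itself. Hence either $N^{\circ}$ has no relevant square embedded network at all, in which case condition~(4) of Theorem~\ref{thm:inj-cfstr} holds vacuously and $N$ passes the injectivity criterion; or $K_{m,n}$ is embedded in $N^{\circ}$, whence (as $N^{\circ}$ is in turn embedded in $K_{m,n}$) antisymmetry of the embedding relation forces $N^{\circ} = K_{m,n}$, and since $N$ is fully open on the species $X_1, \dots, X_n$ this forces $N = \widetilde K_{m,n}$, a contradiction. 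Either way $N$ is not multistationary.

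The main obstacle will be the bookkeeping in part~2: one must verify carefully that passing to $N^{\circ}$ and then extracting square embedded networks genuinely yields embedded networks of $K_{m,n}$, tracking the fact that removing a species can convert a non-flow reaction of $K_{m,n}$ (such as $X_{j-1}+X_j \to 0$) into a flow reaction of $N$, which is then discarded from $N^{\circ}$. One must also confirm that $N^{\circ} = K_{m,n}$ forces $N = \widetilde K_{m,n}$, using that $N$ can contain no species or non-flow reaction beyond those of $\widetilde K_{m,n}$. The orientation computation in part~1, by contrast, is routine once the negative reaction vectors are written out.
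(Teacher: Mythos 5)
Your proposal is correct and takes essentially the same approach as the paper: part~1 via Theorem~\ref{thm:det-opt} and Lemma~\ref{lemma:negor} with an explicit positive combination of the negative reaction vectors, and part~2 via Theorem~\ref{thm:inj-cfstr}, Lemma~\ref{lemma:rel_sen}, and the observation that $N^{\circ}$ is embedded in $K_{m,n}$ so its relevant square embedded networks are relevant square embedded networks of $K_{m,n}$. The paper's proof is a compressed version of yours, exhibiting the explicit coefficients $(\eta_1,\ldots,\eta_n)=(1,\ldots,1,m+1)$, which satisfy your constraint $\eta_n > m\eta_1$.
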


\begin{proof}
By 
Lemma~\ref{lemma:negor} together with Theorem~\ref{thm:det-opt}, to show that $\widetilde K_{m,n}$ is multistationary (for $m \geq 2$ and $n$ odd), we need only exhibit a positive linear combination of the reaction vectors $(1, 0, \ldots, 0, -m), (1,1,0, \ldots, 0,0), (0,1,1, 0 \ldots, 0,0), \ldots, (0,0,\ldots, 0,1,1)$ which is a positive vector. This can be achieved by choosing as coefficients $(\eta_1, \ldots, \eta_n) = (1,1,\ldots,1,m+1)$. 

For part 2, let $N$ be a fully open network that is an embedded network of $\widetilde K_{m,n}$.  Every SEN of the non-flow subnetwork $N^{\circ}$ of $N$ is an SEN of $K_{m,n}$, so in light of Theorem~\ref{thm:inj-cfstr} and Lemmas~\ref{lemma:rel_sen} and~\ref{lemma:negor}, $N^{\circ}$ must equal $K_{m,n}$ in order for $N$ to be multistationary, i.e.\ $N=\widetilde K_{m,n}$.  
\end{proof}

The following conjecture is what remains for us to show that the networks $\widetilde K_{m,n}$ in Lemma~\ref{lem:Kn-for-n-odd} are CFSTR atoms of multistationarity.  It is true for $n=3$ and $m=2$.

\begin{conjecture} \label{conj}
For positive integers $n \ge 2$ and $m \ge 2$, if $n$ is odd, then $\widetilde K_{m,n}$ admits multiple {\em nondegenerate} steady states. 
 \end{conjecture}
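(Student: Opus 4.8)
The plan is to combine a Brouwer-degree identity for dissipative fully open networks with a genericity (parametric transversality) argument, using the negative orientation from Lemma~\ref{lemma:negor} as the source of a robustly nondegenerate ``wrong-sign'' steady state. Throughout, write the inflow, outflow, synthesis, and sequestration rates as $s_i, l_i, \kappa_0, \kappa_1, \dots, \kappa_{n-1}$, collect them into a parameter vector $p \in \mathbb{R}^{3n}_{>0}$, and let $f_p$ denote the mass-action vector field of $\widetilde K_{m,n}$. Since $\widetilde K_{m,n}$ is fully open, $\St = \mathbb{R}^n$, so a positive steady state $x^*$ is nondegenerate (in the sense of the paper) precisely when $df_p(x^*)$ is invertible, i.e.\ when $0$ is a regular value of $f_p$ at $x^*$.

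First I would establish dissipativity and compute a degree. Writing out the ODEs, every quadratic term (from the sequestration reactions) enters with a negative sign, and the only production term is the linear $+m\kappa_0 x_1$ in $\dot x_n$. Choosing positive weights $c_i$ with $c_1(l_1+\kappa_0) > c_n m \kappa_0$ makes the linear part of $\frac{d}{dt}\sum_i c_i x_i$ negative while the quadratic part stays nonpositive, so for large $M$ the set $D = \{x \in \mathbb{R}^n_{\geq 0} : \sum_i c_i x_i \le M\}$ is compact, convex, and forward-invariant, and $f_p$ points strictly inward on $\partial D$; the inflows $s_i>0$ ensure $f_p$ points inward on the coordinate faces and that there are no boundary steady states. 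Consequently every steady state is positive and lies in $\operatorname{int}(D)$, and the Brouwer degree satisfies $\deg(f_p, \operatorname{int}(D), 0) = (-1)^n$. In particular, whenever $0$ is a regular value of $f_p$, the Poincar\'e--Hopf index formula gives $\sum_{x^*} \operatorname{sgn}\det df_p(x^*) = (-1)^n = -1$, since $n$ is odd.

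Next I would produce a persistent wrong-sign steady state. The orientation computed in Lemma~\ref{lemma:negor} is the sign of the dominant (reactant-matching) term of $\det df_p$, which is negative here; this is exactly the input to the determinant optimization method. Opening up the construction behind Theorem~\ref{thm:det-opt} (rather than merely quoting its multistationarity conclusion), one obtains synthesis/sequestration/outflow rates and a point $\bar x \in \mathbb{R}^n_{>0}$ at which the negatively oriented term dominates, so that $\operatorname{sgn}\det df_p(\bar x) = (-1)^{n+1} = +1$; since the inflow equations for $s_1,\dots,s_{n-1}$ are automatically positive and $s_n$ can be kept positive by taking $l_n$ or $\kappa_{n-1}$ large, we may also arrange that $\bar x$ is an honest positive steady state. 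Because $\det df_p(\bar x)\neq 0$, the steady state $\bar x$ is nondegenerate, so by the implicit function theorem it persists (with the same determinant sign) for all $p$ in an open neighborhood $U$. For every $p \in U$ the degree identity forces the index sum to be $-1$ while $\bar x(p)$ contributes $+1$, so additional steady states must exist; hence $U$ lies in the multistationarity region and, crucially, $U$ is open.

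Finally I would invoke genericity. The evaluation map $\Phi(x,p) = f_p(x)$ is a submersion, since $\partial \Phi_i/\partial s_i = 1$ shows $D\Phi$ is already surjective through the inflow directions; hence $0$ is a regular value of $\Phi$, and by the parametric transversality (Sard) theorem the set $V$ of parameters for which $0$ is a regular value of $f_p$ --- equivalently, for which every positive steady state is nondegenerate --- has full measure. Since $U$ is nonempty and open, $U \cap V \neq \emptyset$; for any $p \in U \cap V$ all steady states are nondegenerate and the index sum is $-1$, forcing at least three of them, hence at least two nondegenerate positive steady states, which proves Conjecture~\ref{conj} for all odd $n$ and $m \ge 2$. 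The main obstacle is the third step: extracting from the determinant optimization method not just multistationarity but a genuinely nondegenerate steady state of anomalous index, which requires tracking the proof of Theorem~\ref{thm:det-opt} closely enough to see that the constructed point can be taken with $\det df_p \neq 0$. An alternative tailored to the recursive structure of $K_{m,n}$ is to solve the steady-state equations for $x_2,\dots,x_n$ as rational functions of $x_1$ (all denominators being positive on the positive orthant), reduce to a single univariate polynomial $P(x_1)$, and argue that the multistationary parameters for which $P$ has only simple positive roots are generic, with simple roots of $P$ corresponding to nondegenerate steady states of the full system.
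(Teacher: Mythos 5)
First, a point of orientation: the paper does \emph{not} prove this statement. It is posed as Conjecture~\ref{conj} precisely because the authors could not establish nondegeneracy in general; they record only that it holds for $n=3$, $m=2$, and Theorem~\ref{thm:Kn-cfstr-atom} is explicitly conditional on it. So there is no proof of the paper to compare yours against, and I can only assess your argument on its own terms.

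Your three-step scheme is sound, and I believe it works. Step (i): for any positive rates, your weighted sum $\sum_i c_i x_i$ with $c_1(l_1+\kappa_0) > c_n m \kappa_0$ has nonpositive quadratic part and strictly negative linear part, so a large sublevel set $D$ of the nonnegative orthant contains all steady states, $f_p$ is nonvanishing and inward-pointing on $\partial D$ (inflows handle the coordinate faces), and the homotopy to $x \mapsto x_0 - x$ gives $\deg(f_p, \operatorname{int} D, 0) = (-1)^n = -1$ for odd $n$. Step (iii): since $\partial f_{p,i}/\partial s_j = \delta_{ij}$, the evaluation map is a submersion and parametric transversality applies; the full-measure conclusion does cover \emph{all} steady states because the inflows preclude boundary steady states. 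The index bookkeeping at a parameter in $U \cap V$ (sum of indices $-1$, one summand $+1$, hence at least three nondegenerate positive steady states) is correct, and nondegeneracy in the paper's sense is indeed invertibility of $df_p$ here, because $\St = \R^n$ for a fully open network.

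The crux is step (ii), where your write-up is vague exactly where it should be concrete; fortunately the gap closes easily, and without ``opening up'' the proof of Theorem~\ref{thm:det-opt} at all. Take $\bar x = (1,\dots,1)$, all $l_i = 1$, rate $K$ for every non-flow reaction except $\kappa_{n-1} = mK$ for $X_{n-1}+X_n \to 0$. At $\bar x$ the Jacobian of the non-flow part factors as $dg(\bar x) = \Gamma \operatorname{diag}(\kappa) Y$ (stoichiometric matrix times rates times reactant matrix), so $\det dg(\bar x) = \det(\Gamma)\det(Y)\prod_j \kappa_j = (m-1)\,m K^n$, which is strictly positive for odd $n$ and $m \ge 2$ by the determinant computed in Lemma~\ref{lemma:negor}; since $\det\bigl(dg(\bar x) - L\bigr) - \det dg(\bar x) = O(K^{n-1})$, we get $\det df_p(\bar x) > 0$, the ``wrong'' sign $(-1)^{n+1}$, once $K$ is large. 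Solving $s := L \bar x - g(\bar x)$ then gives $s_i > 0$ automatically for $i < n$ (every non-flow reaction involving $X_i$ consumes it) and $s_n = 1 - mK + \kappa_{n-1} = 1 > 0$ by the choice $\kappa_{n-1} = mK$. So $\bar x$ is an honest nondegenerate positive steady state of anomalous index, and your steps (i) and (iii) finish the proof. One caveat: your fallback ``taking $l_n$ \emph{or} $\kappa_{n-1}$ large'' is only half right. Enlarging $\kappa_{n-1}$ works because the dominant wrong-sign term $(m-1)\prod_j \kappa_j$ is linear in $\kappa_{n-1}$; enlarging $l_n$ does not, since that term is independent of $l_n$ while competing normal-sign terms grow with $l_n$, so large $l_n$ restores injectivity-type behavior at $\bar x$ and destroys the construction. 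With step (ii) made explicit as above, your argument is a complete, genuinely new proof of the conjecture (a degree-theoretic strengthening of the determinant optimization method), and it would render Theorem~\ref{thm:Kn-cfstr-atom} unconditional.
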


\begin{theorem} \label{thm:Kn-cfstr-atom}
If Conjecture~\ref{conj} holds, then:
\begin{enumerate}
	\item For positive integers $n \ge 2$ and $m \ge 2$, if $n$ is odd, then $\widetilde K_{m,n}$ is a CFSTR atom of multistationarity.
	\item There exist infinitely many embedding-minimal multistationary networks with complexes that are at most bimolecular.
\end{enumerate}
\end{theorem}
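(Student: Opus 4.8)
The plan is to read part 1 off directly from Conjecture~\ref{conj} and the lemmas already in hand, and then to bootstrap part 2 by upgrading ``CFSTR atom'' to ``embedding-minimal.'' For part 1, fix odd $n \ge 2$ and $m \ge 2$. By construction $\widetilde K_{m,n}$ is fully open, and Conjecture~\ref{conj} supplies the multiple nondegenerate positive steady states demanded by Definition~\ref{def:atom}(1). Minimality among fully open multistationary networks is then exactly Lemma~\ref{lem:Kn-for-n-odd}(2): every proper fully open embedded network fails to be multistationary, hence a fortiori cannot support two nondegenerate positive steady states. Therefore $\widetilde K_{m,n}$ is a CFSTR atom of multistationarity.

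For part 2 I would fix $m=2$, so that every complex of $\widetilde K_{2,n}$ is at most bimolecular, and let $n$ range over the odd integers $\ge 3$; these networks are pairwise distinct since $\widetilde K_{2,n}$ has $n$ species. It suffices to prove that each $\widetilde K_{2,n}$ is actually embedding-minimal, i.e.\ that no \emph{proper} embedded network $N$ admits multiple nondegenerate positive steady states. The first step is a reduction to the flow-deletion case. If such an $N$ admits multiple nondegenerate positive steady states, then by part~2 of Theorem~\ref{thm:embedding-mss} so does its fully open extension $\widetilde N$; moreover $\widetilde N$ is itself a fully open embedded network of $\widetilde K_{2,n}$ (delete the species absent from $N$ and the non-flow reactions absent from $N$, then reinstate all flows of the surviving species). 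Lemma~\ref{lem:Kn-for-n-odd}(2) then forces $\widetilde N = \widetilde K_{2,n}$, so $N$ must retain every species and every non-flow reaction of $\widetilde K_{2,n}$, differing from it only by the removal of some flow reactions.

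It remains to exclude multistationarity after deleting flows. Writing the mass-action system as in the proof of Theorem~\ref{thm:infin}, observe that none of $X_1,\dots,X_{n-1}$ is produced by any non-flow reaction (only $X_n$ is produced, via $X_1 \to m X_n$). Consequently, if the inflow $0 \to X_i$ is removed for some $i \le n-1$, then at any positive point $\dot x_i = -x_i\,\bigl(\text{strictly positive degradation and sequestration terms}\bigr) < 0$, so $N$ has \emph{no} positive steady state at all. The remaining possibilities, namely removing the single inflow $0 \to X_n$ or removing one or more outflows, I would settle by direct analysis of the steady-state polynomial system, in the same spirit as the reaction-removal case of Theorem~\ref{thm:infin} for $H_{m,n}$. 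Once every proper embedded network is ruled out, each $\widetilde K_{2,n}$ is embedding-minimal, and letting $n$ run through the odd integers produces infinitely many embedding-minimal multistationary networks with at-most-bimolecular complexes.

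The main obstacle is exactly this final flow-deletion analysis for general odd $n$. The inflows to $X_1,\dots,X_{n-1}$ are disposed of instantly, but removing an outflow of some $X_j$ (or the inflow to $X_n$) resists the easy tools: the injectivity criterion does not apply, since the non-flow subnetwork $K_{2,n}$ has a negatively oriented relevant square embedded network (Lemma~\ref{lemma:negor}), and the positive-dependence necessary condition underlying Proposition~\ref{prop:nec-cond-subnetwork-lift} is not violated, since the reaction vectors remain positively dependent after deleting a single flow. One must therefore confront the full $n$-dimensional steady-state system and show it has at most one positive root per compatibility class, \emph{uniformly in $n$}; this uniform count is the crux. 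If it proves intractable, I would retreat to the weaker route of extracting, inside each multistationary $\widetilde K_{2,n}$, some embedding-minimal embedded network $M_n$, and then proving the $M_n$ are infinitely many by bounding below the number of species any such $M_n$ must contain (for instance, ruling out the small atoms of Corollary~\ref{cor:cfstr-1-rxn} as embedded networks of $\widetilde K_{2,n}$).
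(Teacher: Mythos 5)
Your part 1 is correct and coincides with the paper's: Conjecture~\ref{conj} supplies the multiple nondegenerate positive steady states, and part 2 of Lemma~\ref{lem:Kn-for-n-odd} supplies minimality among fully open networks. The problem is part 2. Your primary route sets out to prove that $\widetilde K_{2,n}$ is \emph{itself} embedding-minimal, which is strictly stronger than what the theorem asserts and than what the paper proves (the paper never claims $\widetilde K_{2,n}$ is embedding-minimal, and it is not known to be). Your reduction is sound as far as it goes: Theorem~\ref{thm:embedding-mss}(2) together with Lemma~\ref{lem:Kn-for-n-odd}(2) does force any nondegenerately multistationary proper embedded network $N$ to consist of $K_{2,n}$ plus a proper subset of the flow reactions, and your observation that $\dot x_i < 0$ at positive points kills the cases where an inflow $0 \to X_i$, $i \le n-1$, is deleted. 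But the remaining cases (deleting the inflow $0 \to X_n$, or deleting outflows) are exactly where you stop, and no tool available in the paper can close them: injectivity cannot preclude multistationarity there because $K_{2,n}$ itself is a negatively oriented $n$-square embedded network of each such $N$ (Lemma~\ref{lemma:negor}), so one is left with an $n$-dimensional polynomial system whose positive roots must be counted uniformly in $n$. This is a genuine gap, not a routine verification.

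What the paper actually does is your fallback route, but with the key ingredient your sketch lacks. Granting Conjecture~\ref{conj}, each $\widetilde K_{2,n}$ contains \emph{some} embedding-minimal multistationary network $N_{2,n}$ (by finiteness of the embedding partial order); the paper then proves that $K_{2,n}$ must be a subnetwork of $N_{2,n}$, so that $N_{2,n}$ has exactly $n$ species and the family is infinite. The containment is proved by an injectivity argument: if the non-flow subnetwork $N^{\circ}_{2,n}$ were a proper embedded network of $K_{2,n}$, then every square embedded network of $N^{\circ}_{2,n}$ would be a proper SEN of $K_{2,n}$ and hence non-relevant by Lemma~\ref{lemma:rel_sen}; since SENs containing inflows have zero orientation and outflow species can be removed without changing orientation, $N_{2,n}$ would then be injective, contradicting its multistationarity. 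Your proposed substitute---ruling out the atoms of Corollary~\ref{cor:cfstr-1-rxn} as embedded networks of $\widetilde K_{2,n}$---would not suffice: embedding-minimal multistationary networks need not be fully open, so they need not appear in any catalogue of CFSTR atoms, and excluding finitely many known small networks gives no lower bound on species count. What is needed is a bound, growing with $n$, on the number of species of \emph{any} multistationary embedded network of $\widetilde K_{2,n}$, and that is precisely what Lemma~\ref{lemma:rel_sen} combined with the SEN formulation of injectivity delivers.
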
 

\begin{proof}
Assume that Conjecture~\ref{conj} holds.  Then part 1 follows from part 2 of Lemma~\ref{lem:Kn-for-n-odd}.  

It follows that the $\widetilde K_{2,n}$'s, for odd $n\ge 2$, form an infinite family of at-most-bimolecular CFSTR atoms of multistationarity. 
By definition, each such $\widetilde K_{2,n}$ has at least one subnetwork $N=N_{2,n}$ which is an embedding-minimal multistationary network. 

We now claim that $K_{2,n}$ is a subnetwork of $N$. By way of contradiction, assume that this is not the case, so $N^{\circ}$, the non-flow subnetwork of $N$, is a proper, embedded network of $K_{2,n}$. Consider a square embedded network $N_i$ of $N$. If $N_i$ contains an inflow reaction, then its orientation is zero. On the other hand, if $N_i$ contains an outflow reaction, the species in the outflow reaction can be removed without changing the orientation of $N_i$. Thus, to establish injectivity of $N$, it suffices to show that all SENs of $N^\circ$ have nonnegative orientation (recall the discussion after Definition~\ref{def:square-or}), which is furthermore equivalent to showing that all relevant SENs of $N^\circ$ have nonnegative orientation. Indeed, this follows from Lemma~\ref{lemma:rel_sen},  as no proper SEN of $K_{2,n}$ is relevant. 


Thus, $N_{2,n}$ has $n$ species, so these embedding-minimal multistationary networks (for odd $n$) form an infinite family.
\end{proof}


 
\section{Existence of infinitely many embedding-minimal multistable networks} \label{sec:infinitely2}

In this section, we demonstrate the existence of infinitely many embedding-minimal {\em multistable} networks, again via explicit construction. We will build on the findings for the networks $G_{m,n}$ 
studied earlier. 


\begin{theorem} \label{thm:multistable}
For positive integers $m \ne n$, consider the network: $\overline{G}_{m,n} := \{0 \leftrightarrow  A,~ mA \lra nA \}$. 
Then $\overline G_{m,n}$ is a CFSTR atom of multistability and an embedding-minimal multistable network if and only if $ n > 1$ and $m > 1$. 
\end{theorem}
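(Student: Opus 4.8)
The plan is to treat $\overline G_{m,n}$ as a one-dimensional mass-action system and reduce both the multistability claim and the minimality claim to the sign structure of a single univariate polynomial. Since the reversible reaction $mA \lra nA$ is unchanged under swapping $m$ and $n$, we have $\overline G_{m,n} = \overline G_{n,m}$ as networks, so I may assume without loss of generality that $n > m$. Writing $a$ for the concentration of $A$ and $s,l,k_1,k_2 > 0$ for the rates of $0\to A$, $A\to 0$, $mA\to nA$, and $nA\to mA$, the single mass-action ODE is
\[
\dot a \;=\; f(a) \;:=\; s - l a + (n-m)\bigl(k_1 a^m - k_2 a^n\bigr).
\]
Because the system is scalar, a positive steady state is a positive root of $f$; it is nondegenerate exactly when the root is simple; and it is stable (resp.\ unstable) precisely when $f$ crosses zero from $+$ to $-$ (resp.\ $-$ to $+$). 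The elementary fact I will use repeatedly is that two stable positive steady states force an intervening unstable one, so multistability requires at least three simple positive roots of $f$ in the sign pattern $+,-,+,-$.

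For the forward implication I assume $m>1$ (hence $n>m>1$) and build the configuration from $G_{m,n}$, as the paper suggests. By Theorem~\ref{thm:infin} there are rates $s,l,k_1$ for which $f_0(a) := s - la + (n-m)k_1 a^m$ has two simple positive roots $r_1 < r_2$, with $f_0'(r_1)<0$ and $f_0'(r_2)>0$ (these come from a strictly negative interior minimum of the degree-$m$ polynomial $f_0$). Now reintroduce the reverse reaction with small rate $k_2>0$, so $f = f_0 - (n-m)k_2 a^n$. By the implicit function theorem the two roots persist for small $k_2$ as simple roots $\tilde r_1 < \tilde r_2$ with the same sign of $f'$, so $\tilde r_1$ is stable and $\tilde r_2$ unstable; and since $n>m$ the leading term is negative, so $f(a)\to-\infty$, forcing a third root $\tilde r_3 > \tilde r_2$ at which $f$ crosses $+$ to $-$, hence stable. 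This gives two stable nondegenerate positive steady states, so $\overline G_{m,n}$ is multistable. For the converse, if $\min(m,n)=1$, say $m=1<n$, the only nonlinear term of $f$ is $-(n-1)k_2 a^n$, so $f''<0$ on $(0,\infty)$; a strictly concave $f$ with $f(0)=s>0$ and $f(\infty)=-\infty$ has exactly one positive root for every choice of rates, so $\overline G_{m,n}$ is not even multistationary (and hence neither a CFSTR atom of multistability nor embedding-minimal multistable, as both require multistability). Thus $\overline G_{m,n}$ is multistable iff $m,n>1$.

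It remains to show that, when $m,n>1$, no proper embedded network of $\overline G_{m,n}$ is multistable, and here Descartes' rule of signs does the work. As $\overline G_{m,n}$ has a single species, any embedded network either deletes $A$ (and is trivial) or retains $A$ and is a proper subnetwork obtained by deleting at least one reaction. The coefficient signs of $f$ at degrees $0,1,m,n$ are $+,-,+,-$, giving three sign changes; deleting any one reaction removes one term, and a direct check shows the remaining sign sequence has at most two sign changes in every case: deleting the outflow gives $+,+,-$ and deleting $mA\to nA$ gives $+,-,-$ (one change each), while deleting $nA\to mA$ gives $+,-,+$ and deleting the inflow gives $-,+,-$ after factoring out $a$ (two changes each). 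Deleting more reactions only removes further terms. Hence every proper subnetwork has at most two positive steady states, so at most one stable one, and cannot be multistable. Therefore $\overline G_{m,n}$ is an embedding-minimal multistable network, and since it is fully open it is in particular a CFSTR atom of multistability.

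The step I expect to demand the most care is the forward construction: upgrading ``multistationary'' to ``multistable'' by certifying \emph{two} stable steady states, and in particular verifying that the third root $\tilde r_3$ produced by the reverse reaction is nondegenerate. This can be secured either by genericity of the small parameter $k_2$, or by invoking the Descartes bound — which caps the full network at three positive roots — to conclude that the three roots already located must all be simple for an open set of rate constants.
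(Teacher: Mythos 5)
Your proposal is correct and follows essentially the same route as the paper's own proof: reduce to the single univariate polynomial, perturb the two nondegenerate steady states of $G_{m,n}$ by a small reverse rate to obtain three roots whose two downward crossings are stable, and establish minimality by checking that deleting any reaction leaves a polynomial with at most two positive roots, at most one of them stable. Your extra ingredients (the implicit function theorem, Descartes' rule of signs, and the concavity argument for $\min(m,n)=1$) simply fill in the steps the paper labels as straightforward or cites from Theorem~\ref{thm:onerxn}.
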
 
\begin{proof}
We assume without loss of generality that $n > m$. We know from Theorem \ref{thm:onerxn} that $\overline G_{m,n}$ is multistationary if and only if $n > m >1$. Let  $s$, $l$, $k_+$ and $k_-$ denote the rates of the reactions $0 \to A$, $A \to 0$, $mA \to nA$, and $nA \to mA$, respectively. 
Then, the single mass-action ODE~\eqref{eq:ODE-mass-action} arising from $\overline G_{m,n}$ is $\dot a = s - la + (n-m)k_+a^m - (n-m)k_-a^n$, where $a$ denotes the concentration of the species $A$; thus the number of positive steady states is the number of positive real roots of the univariate polynomial 
\begin{align} \label{eq:poly-for-G2}
f(a) := s - la + (n-m)k_+a^m- (n-m)k_-a^n~.
\end{align}
Note that $\overline G_{m,n}$ reduces to $G_{m,n}$ when $k_- = 0$. By Theorem \ref{thm:onerxn}, there exist parameter values $s, l, k_+>0$ for which $G_{m,n}$ has two nondegenerate positive steady states. Thus, for fixed  $s, l, k_+>0$ such that $G_{m,n}$ has two nondegenerate positive steady states, there exists a sufficiently small $k_->0$ such that $\overline G_{m,n}$ has three nondegenerate positive steady states. Furthermore, since $f(0) = s >0$ and $\lim_{a \to \infty} f(a) = -\infty$, the graph of $f(a)$ crosses the $a$-axis from above twice -- these crossing points correspond to stable steady states. 

The only nontrivial embedded networks of $\overline G_{m,n}$ arise from removing reactions (as $G_{m,n}$ has only one species), i.e. setting one of the reaction rates $s$, $l$, $k_+$ or $k_-$ to zero.  It is straightforward to see that this results in the polynomial~\eqref{eq:poly-for-G2} having at most two positive real roots, of which at most one corresponds to a stable steady state. 
\end{proof}

\section{Discussion} \label{sec:openQ}
As described in the Introduction, deciding whether a given network arising in practice is multistationary is an important first step in understanding its dynamics.  Here we have presented various criteria for answering this question.  However, there are many networks for which none of the existing criteria apply.  To this end, we have seen that the approach via ``transferring'' multistationarity from one network to a (typically larger) network can be helpful.  Nonetheless, we are still far from a complete answer.

Looking forward, we highlight some future directions:
\been
	\item Future investigations will likely develop additional criteria for deciding multistationarity tailored to networks arising in specific application domains.  
	\item Some networks arising in biology are CFSTRs.  Can we use the idea of CFSTR atoms or embedding-minimal multistationary networks to understand such networks that are multistationary?  As an example, Fouchet and Regoes analyzed a multistationary CFSTR arising in immunology~\cite{FouchetRegoes}; can we enumerate, and then interpret, the multistationary networks embedded in this network?  Siegel-Gaskins {\em et al.}\ perfomed a related analysis of small gene regulatory networks~\cite{Emergence}; see also~\cite[Remark 3.3]{joshi2012atoms}.
	\item We must address computational challenges inherent in enumerating CFSTR atoms and embedding-minimal multistationary networks, and then providing a method for determining whether a given network contains as an embedded network, a CFSTR atom, for instance.
\enen

Finally, our true interest is in atoms of {\em multistability}, as steady states observed in practice are necessarily stable.  Therefore, we need more criteria, guided by networks arising in practice, for when such atoms can be lifted.


\section*{Appendix: properties of the embedded-network relation}
Here we prove Proposition~\ref{prop:emb-properties}, restated here as Proposition~\ref{prop:restated}.

\begin{lemma} \label{lem:emb-properties}
Let $N$ be an embedded network of $G$. 
\been
\item If $N$ is a subnetwork of $G$, then the stoichiometric subspace of $N$ is a subspace of the stoichiometric subspace of $G$. 
\item If $N$ is obtained by removing a set of species of $G$, then the stoichiometric subspace of $N$ is a projection of the stoichiometric subspace of $G$. 
\item The number of complexes in $N$ is no more than the number of complexes in $G$. 
\enen
\end{lemma}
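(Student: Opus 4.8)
The plan is to handle all three parts through one unifying device: the coordinate projection $\pi \colon \R^s \to \R^{s'}$ that deletes the coordinates of the removed species (here $s'$ is the number of surviving species), combined with the observation that forming the restriction $\RR|_{\SS'}$ amounts to replacing each reaction vector $y_j - y_i$ by $\pi(y_j - y_i)$ and each complex $y$ by $\pi(y)$, after which only trivial and duplicate reactions are discarded. For part 1, $N$ is a subnetwork of $G$, so no species are removed, $\pi$ is the identity, and the reactions of $N$ form a sub-collection of those of $G$. Hence the reaction vectors spanning $S_N$ are a subset of those spanning $S_G$, and so $S_N \subseteq S_G$ follows at once from the definition~\eqref{eq:stoic_subs} of the stoichiometric subspace, since a sub-collection of vectors spans a subspace of the full span.

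For part 2, I would invoke the standard fact that for a linear map $\pi$ and any collection of vectors $\{v_\alpha\}$ one has $\operatorname{span}\{\pi(v_\alpha)\} = \pi\!\left(\operatorname{span}\{v_\alpha\}\right)$. Removing species sends each reaction vector $y_j - y_i$ of $G$ to $\pi(y_j - y_i)$, so $S_N$ is spanned by the vectors $\pi(y_j - y_i)$; applying the identity gives $S_N = \pi(S_G)$, exhibiting $S_N$ as the asserted projection of $S_G$. The two bookkeeping steps in the definition of restriction leave this untouched: a reaction that becomes trivial satisfies $\pi(y_i) = \pi(y_j)$, hence contributes the zero vector and may be dropped without changing the span, and a duplicate reaction contributes a repeated vector, which likewise does not change the span.

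For part 3, which is unconditional and so must cover removal of both species and reactions, I would exhibit $\pi$ as a map from the complexes of $G$ onto a superset of the complexes of $N$. Every reaction of $N$ is the restriction of some reaction of $G$, so its source and product complexes have the form $\pi(y)$ for complexes $y$ of $G$; therefore the complex set $\CC|_{\RR_N}$ of $N$ is contained in $\pi(\CC)$. Since $\pi$ is a function, $|\pi(\CC)| \le |\CC|$, and so the number of complexes of $N$ is at most the number of complexes of $G$.

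The step I expect to require the most care is the handling of the trivial- and duplicate-reaction clauses in the definition of $\RR|_{\SS'}$: one must confirm that discarding these reactions never deletes a vector needed to span $S_N$ in part 2 and never raises the complex count in part 3. Once it is recorded that trivial reactions correspond exactly to zero reaction vectors and that duplicate reactions produce only repeated reaction vectors and repeated complexes, both discard steps are seen to be harmless, and the projection argument goes through cleanly in each part.
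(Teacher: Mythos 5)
Your proof is correct and takes essentially the same approach as the paper's: reaction vectors of a subnetwork form a sub-collection (part 1), the species-removal map is a linear coordinate projection under which spans and complexes push forward (parts 2 and 3). The only difference is one of care, not of substance: you make explicit the bookkeeping about trivial reactions (zero vectors) and duplicates (repeated vectors/complexes), and you treat part 3 uniformly via the projection applied to a general embedding, whereas the paper splits into the subnetwork and species-removal cases and leaves the discard steps implicit.
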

\begin{proof}
The stoichiometric subspace of a network $N$ is $S_N := \mbox{span} \{y' - y  |  y \to y' \in N\}$. If $N$ is a subnetwork of $G$, $\{y' - y | y \to y' \in N\} \subseteq \{y' - y | y \to y' \in G\}$, and so $S_N$ is a subspace of $S_G$. If $N$ is obtained by removing a subset of species of $G$, the reaction vectors of $N$ are a projection of the reaction vectors of $G$, from which the second assertion follows. 
As for the third item, if $N$ is a subnetwork of $G$, then the complexes of $N$ are a subset of the complexes of $G$. In the case of species removal, every complex in $N$ is a projection of a complex in $G$, so this completes the proof. 
\end{proof}

\begin{proposition} \label{prop:restated}
\been
\item If $N$ is an embedded network of $G$, the dimension of the stoichiometric subspace of $N$ is less than or equal to the dimension of the stochiometric subspace of $G$, $\dim(S_N) \le \dim(S_G)$. 
\item If $N$ is a subnetwork of $G$, then the deficiency of $N$ is no greater than the deficiency of $G$, that is, $\delta_N \le \delta_G$. 
\enen
\end{proposition}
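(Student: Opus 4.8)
The plan is to prove Proposition~\ref{prop:restated} by isolating the two assertions and handling each via the structural facts collected in Lemma~\ref{lem:emb-properties}. The overall strategy is that the embedded-network relation is built from two elementary operations (removing reactions and removing species), so it suffices to control how each operation affects the stoichiometric subspace and the deficiency, and then compose.

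For part 1, I would argue that the embedding relation decomposes into a reaction-removal step followed by a species-removal step. By part 1 of Lemma~\ref{lem:emb-properties}, deleting reactions passes to a subspace, hence cannot increase the dimension of $S_N$. By part 2 of the same lemma, removing species replaces the reaction vectors by their projection onto the surviving coordinates; since a linear projection of a vector space cannot raise its dimension, this step also cannot increase $\dim(S_N)$. Composing the two inequalities gives $\dim(S_N) \le \dim(S_G)$. The only subtlety to check is that discarding trivial or duplicate reactions (which the definition of restriction forces) only removes generators of the span, and therefore likewise never increases the dimension; so the projected reaction vectors of $N$ span a subspace of the projection of $S_G$.

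For part 2, $N$ is a genuine subnetwork of $G$, so I would work directly from the formula $\delta = p - l - \dim(\St)$, valid here because of the standing hypothesis that each linkage class contains a unique terminal strong linkage class. Writing $\delta_N = p_N - l_N - \dim(S_N)$ and $\delta_G = p_G - l_G - \dim(S_G)$, I need to track how $p$ (complexes), $l$ (linkage classes), and $\dim(\St)$ change when reactions are deleted. Part 3 of Lemma~\ref{lem:emb-properties} gives $p_N \le p_G$, and part 1 gives $\dim(S_N) \le \dim(S_G)$; the delicate term is $l$, since deleting reactions can split a linkage class and thereby \emph{increase} the number of connected components. The cleanest route is to reduce to removing a single reaction and track all three quantities simultaneously: if the deleted reaction $y \to y'$ connected two complexes already joined by another path, then $l$ is unchanged while $\dim(\St)$ drops by at most one, so $\delta$ cannot increase; if instead its removal disconnects a linkage class, then $l$ increases by one but $\dim(\St)$ also drops by exactly one (the reaction vector was linearly independent of the remaining ones, as it was the unique link), leaving $p - l - \dim(\St)$ nonincreasing. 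Possibly a complex becomes isolated and is dropped, decreasing both $p$ and $l$ by one, again with no net increase. Iterating over the reactions removed yields $\delta_N \le \delta_G$.

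The main obstacle I anticipate is exactly this bookkeeping for the deficiency in part 2: the three quantities $p$, $l$, and $\dim(\St)$ all move under reaction deletion, and their interplay (especially the coupling between the jump in $l$ and the drop in $\dim(\St)$ when a bridge is removed) is what makes the inequality nontrivial. I would therefore organize the argument as a one-reaction-at-a-time induction and verify the inequality case-by-case according to whether the removed reaction is redundant within its linkage class, is a bridge, or isolates a complex that is then discarded. The subspace-dimension claims in part 1 are comparatively routine once Lemma~\ref{lem:emb-properties} is in hand, so the real content lies in confirming that no combination of these elementary moves can make the deficiency grow.
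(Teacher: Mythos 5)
Your overall plan---part 1 as a consequence of Lemma~\ref{lem:emb-properties}, and part 2 by removing one reaction at a time with a case analysis on how $p$, $l$, and $\dim(\St)$ move---is the same as the paper's, and part 1 is fine. But the case analysis in part 2 has a genuine error in its central case. When the removed reaction $y \to y'$ joins two complexes that remain connected by another path (so $p$ and $l$ are unchanged), you argue that ``$\dim(\St)$ drops by at most one, so $\delta$ cannot increase.'' This inference is backwards: since $\delta = p - l - \dim(\St)$, with $p$ and $l$ fixed a \emph{drop} in $\dim(\St)$ makes $\delta$ \emph{increase}. What you must show in this case is that $\dim(\St)$ does not drop at all, and that is exactly where the surviving path is needed: writing the path from $y$ to $y'$ as complexes $y = y_0, y_1, \dots, y_m = y'$, where for each $i$ either $y_i \to y_{i+1}$ or $y_{i+1} \to y_i$ is a reaction of $N$, one gets $y' - y = \sum_i \pm (y_{i+1} - y_i)$, so the removed reaction vector already lies in $S_N$, hence $\dim(S_N) = \dim(S_G)$ and $\delta_N = \delta_G$. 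You state the hypothesis (``already joined by another path'') but never use it; as written, this step fails.

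The other two cases contain smaller inaccuracies. In the bridge case, your parenthetical justification---that the removed reaction vector is linearly independent of the remaining ones ``as it was the unique link''---is false in general: in the network $\{0 \to A,\ A \to 2A,\ 2A \to 3A\}$, removing $A \to 2A$ disconnects the linkage class, yet its reaction vector equals each of the remaining ones. Graph-theoretic uniqueness of a link says nothing about linear independence. The conclusion survives anyway, because in that case you only need $\dim(\St)$ to drop by \emph{at most} one while $l$ increases by exactly one. Similarly, your accounting for the isolated-complex case is off: if exactly one of $y, y'$ becomes isolated and is discarded, then $p$ drops by one but $l$ is \emph{unchanged}; $l$ drops by one only when $y \to y'$ was an entire linkage class, in which case $p$ drops by two. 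With the corrected bookkeeping all cases do give $\delta_N \le \delta_G$, matching the paper's Cases (i)--(iii), but the redundant-edge case genuinely requires the path-dependence argument, which is the real content of the proof and is missing from your write-up.
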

\begin{proof}
The first assertion is a direction consequence Lemma~\ref{lem:emb-properties}. For the second item, we need only show that if $N$ is obtained from $G$ by removing one reaction, then $\delta_N \le \delta_G$. In fact, we will show in this case that $\delta_N \in \{\delta_G, \delta_{G}-1 \}$. Suppose that $N = G \setminus \{ y \to y'\}$, i.e. $\{ y \to y'\}$ is the reaction that is removed from $G$ to obtain $N$. If $y' - y$ is in the span of the reaction vectors of $N$, then $\dim(S_N) = \dim(S_G)$, otherwise $\dim(S_N) = \dim(S_G) - 1$. Let $p_N$ and $p_G$ represent the number of complexes, and let $l_N$ and $l_G$ represent the number of linkage classes of $N$ and $G$, respectively. 

Case (i) ($p_N = p_G-2$): Removing the single reaction $\{ y \to y'\}$ from $G$ results in removing two complexes only if neither $y$ nor $y'$ participates in any reaction of $N$. In other words, this case occurs only if $\{ y \to y'\}$ is itself a linkage class. Therefore, $l_N = l_G-1$, and the result follows. 

Case (ii) ($p_N = p_G-1$): In this case, either $y$ or $y'$, but not both of them, participates in at least one reaction of $N$, which implies that $l_N = l_G$. 

Case (iii) ($p_N = p_G$): In this case, each of $y$ and $y'$ participates in at least one reaction of $N$. Suppose that removal of $y \to y'$ from $G$ results in a linkage class of $G$ splitting into two linkage classes of $N$, so that $l_N = l_G+1$. Then $\delta_N \in \{\delta_G, \delta_G-1\}$. If removal of $y \to y'$ from $G$ does not split a linkage class, then $l_N = l_G$. Since the linkage class does not split, there exists a set of complexes in $N$, $\{y_0,y_1, \ldots y_{m-1}, y_m\}$ where $y_0 :=y$ and $y_m :=y'$ such that either $y_i \to y_{i+1}$ or $y_{i+1} \to y_i$ is a reaction in $N$ for all $i$ for which $0 \le i \le m-1$. But this implies that the reaction vector $y' - y$ is a linear combination of the reaction vectors $\pm (y_i - y_{i+1})$. Thus, $\dim(S_N) = \dim(S_G)$, and so $\delta_N = \delta_G$. 
\end{proof}

\begin{remark}
Removing a species from a reaction network, even if doing so does not  decrease the number of reactions, can cause the deficiency to increase, decrease, or remain the same.  For instance, removing species $A$ from $\{ A+B \to 0, ~ A \to 2B \}$ yields the embedded network $\{ B \to 0 \to 2B\}$ and increases the deficiency from 0 to 1. 
An example in which the deficiency does not change arises by removing $A$ from the reaction $A \to B$; both have deficiency 0.  
Finally, removing the species $A$ from the network $\{A+B \to C \to B \to D \to 2A+B\}$ yields the network $\{C \lra B \lra D \}$, which decreases the deficiency from 1 to 0.
\end{remark} 


\subsection*{Acknowledgements}
AS was supported by the NSF (DMS-1312473).



\bibliographystyle{plain}
\bibliography{multistationarity}

\begin{thebibliography}{10}

\bibitem{ADS10}
David Angeli, Patrick De~Leenheer, and Eduardo Sontag.
\newblock Graph-theoretic characterizations of monotonicity of chemical
  networks in reaction coordinates.
\newblock {\em J. Math. Biol.}, 61(4):581--616, 2010.

\bibitem{banaji}
Murad Banaji.
\newblock Monotonicity in chemical reaction systems.
\newblock {\em Dyn. Syst.}, 24(1):1--30, 2009.

\bibitem{BanajiCraciun2010}
Murad Banaji and Gheorghe Craciun.
\newblock {Graph-theoretic criteria for injectivity and unique equilibria in
  general chemical reaction systems}.
\newblock {\em Adv. Appl. Math.}, 44(2):168--184, 2010.

\bibitem{BanajiM}
Murad Banaji and Janusz Mierczy{\'n}ski.
\newblock Global convergence in systems of differential equations arising from
  chemical reaction networks.
\newblock {\em J. Differential Equations}, 254(3):1359--1374, 2013.

\bibitem{BP}
Murad Banaji and Casian Pantea.
\newblock Some results on injectivity and multistationarity in chemical
  reaction networks.
\newblock {\em preprint, {\tt http://arXiv.org/abs/1309.6771}}, 2013.

\bibitem{ConradiMincheva14}
C.~Conradi and M.~Mincheva.
\newblock Graph-theoretic analysis of multistationarity using degree theory.
\newblock {\em preprint, {\tt http://arxiv.org/abs/1411.2896}}, 2014.

\bibitem{Conradi_subnetwork}
Carsten Conradi, Dietrich Flockerzi, J\"org Raisch, and J\"org Stelling.
\newblock {Subnetwork analysis reveals dynamic features of complex
  (bio)chemical networks}.
\newblock {\em Proc. Natl. Acad. Sci. USA}, 104(49):19175--19180, 2007.

\bibitem{CGS}
G.~Craciun, L.~Garcia-Puente, and F.~Sottile.
\newblock Some geometrical aspects of control points for toric patches.
\newblock In M~D\ae{}hlen, M~S Floater, T~Lyche, J-L Merrien, K~Morken, and L~L
  Schumaker, editors, {\em Mathematical Methods for Curves and Surfaces},
  volume 5862 of {\em Lecture Notes in Comput. Sci.}, pages 111--135,
  Heidelberg, 2010. Springer.

\bibitem{ME1}
Gheorghe Craciun and Martin Feinberg.
\newblock Multiple equilibria in complex chemical reaction networks. {I}. {T}he
  injectivity property.
\newblock {\em SIAM J. Appl. Math.}, 65(5):1526--1546, 2005.

\bibitem{ME_entrapped}
Gheorghe Craciun and Martin Feinberg.
\newblock Multiple equilibria in complex chemical reaction networks: extensions
  to entrapped species models.
\newblock {\em IEE Proceedings-Systems Biology}, 153:179--186, 2006.

\bibitem{ME2}
Gheorghe Craciun and Martin Feinberg.
\newblock Multiple equilibria in complex chemical reaction networks. {II}.
  {T}he species-reaction graph.
\newblock {\em SIAM J. Appl. Math.}, 66(4):1321--1338, 2006.

\bibitem{ME3}
Gheorghe Craciun and Martin Feinberg.
\newblock Multiple equilibria in complex chemical reaction networks: Semiopen
  mass action systems.
\newblock {\em SIAM J. Appl. Math.}, 70(6):1859--1877, 2010.

\bibitem{CPS}
Gheorghe Craciun, Casian Pantea, and Eduardo Sontag.
\newblock {\em {Graph-theoretic characterizations of multistability and
  monotonicity for biochemical reaction networks}}, pages 63--72.
\newblock Springer, 2011.

\bibitem{DB}
Pete Donnell and Murad Banaji.
\newblock Local and global stability of equilibria for a class of chemical
  reaction networks.
\newblock {\em SIAM J. Appl. Dyn. Syst.}, 12(2):899--920, 2013.

\bibitem{control}
Pete Donnell, Murad Banaji, Anca Marginean, and Casian Pantea.
\newblock {CoNtRol}: an open source framework for the analysis of chemical
  reaction networks.
\newblock {\em Bioinformatics}, page to appear, 2014.

\bibitem{EllisonThesis}
Phillipp Ellison.
\newblock {\em The advanced deficiency algorithm and its applications to
  mechanism discrimination}.
\newblock PhD thesis, University of Rochester, 1998.

\bibitem{Toolbox}
Phillipp Ellison, Martin Feinberg, Haixia Ji, and Daniel Knight.
\newblock {C}hemical {R}eaction {N}etwork {T}oolbox, 2011.
\newblock Available at \url{http://www.crnt.osu.edu/CRNTWin}.

\bibitem{Feinberg72}
Martin Feinberg.
\newblock Complex balancing in general kinetic systems.
\newblock {\em Arch. Rational Mech. Anal.}, 49(3):187--194, 1972.

\bibitem{FeinDefZeroOne}
Martin Feinberg.
\newblock {Chemical reaction network structure and the stability of complex
  isothermal reactors I. The deficiency zero and deficiency one theorems}.
\newblock {\em Chem. Eng. Sci.}, 42(10):2229--2268, 1987.

\bibitem{Fein95DefOne}
Martin Feinberg.
\newblock Multiple steady states for chemical reaction networks of deficiency
  one.
\newblock {\em Arch. Rational Mech. Anal.}, 132(4):371--406, 1995.

\bibitem{FeliuWiuf_MAK}
E.~Feliu and C.~Wiuf.
\newblock Preclusion of switch behavior in reaction networks with mass-action
  kinetics.
\newblock {\em Appl. Math. Comput.}, 219:1449--1467, 2012.

\bibitem{feliu}
Elisenda Feliu.
\newblock Injectivity, multiple zeros, and multistationarity in reaction
  networks.
\newblock {\em preprint, {\tt http://arXiv.org/abs/1407.2955}}, 2014.

\bibitem{FeliuWiuf}
Elisenda Feliu and Carsten Wiuf.
\newblock Simplifying biochemical models with intermediate species.
\newblock {\em J. R. Soc. Interface}, 10(87), 2013.

\bibitem{FouchetRegoes}
David Fouchet and Roland Regoes.
\newblock A population dynamics analysis of the interaction between adaptive
  regulatory {T} cells and antigen presenting cells.
\newblock {\em PLoS ONE}, 3(5):e2306, 05 2008.

\bibitem{gnacadja_linalg}
G.~Gnacadja.
\newblock A {J}acobian criterion for the simultaneous injectivity on positive
  variables of linearly parameterized polynomials maps.
\newblock {\em Linear Algebra Appl.}, 437:612--622, 2012.

\bibitem{grassberger1982phase}
Peter Grassberger.
\newblock On phase transitions in {S}chl{\"o}gl's second model.
\newblock {\em Zeitschrift f{\"u}r Physik B Condensed Matter}, 47(4):365--374,
  1982.

\bibitem{HeltonDeterminant}
J.~William Helton, Igor Klep, and Raul Gomez.
\newblock {Determinant expansions of signed matrices and of certain Jacobians}.
\newblock {\em SIAM J. Matrix Anal. Appl.}, 31(2):732--754, 2009.

\bibitem{ivanova}
A.~Ivanova.
\newblock Conditions for uniqueness of stationary state of kinetic systems
  related to structural scheme of reactions.
\newblock {\em Kinet.\ Katal.}, 20:1019--1023, 1979.

\bibitem{JiThesis}
Haixia Ji.
\newblock {\em Uniqueness of Equilibria for Complex Chemical Reaction
  Networks}.
\newblock PhD thesis, Ohio State University, 2011.

\bibitem{joshi2013complete}
Badal Joshi.
\newblock Complete characterization by multistationarity of fully open networks
  with one non-flow reaction.
\newblock {\em Applied Mathematics and Computation}, 219:6931--6945, 2013.

\bibitem{simplifying}
Badal Joshi and Anne Shiu.
\newblock Simplifying the {J}acobian criterion for precluding multistationarity
  in chemical reaction networks.
\newblock {\em SIAM J. Appl. Math.}, 72(3):857--876, 2012.

\bibitem{joshi2012atoms}
Badal Joshi and Anne Shiu.
\newblock Atoms of multistationarity in chemical reaction networks.
\newblock {\em Journal of Mathematical Chemistry}, 51(1):153--178, 2013.

\bibitem{lesch1995neurotransmitter}
K~Peter Lesch and Dietmar Bengel.
\newblock Neurotransmitter reuptake mechanisms.
\newblock {\em CNS drugs}, 4(4):302--322, 1995.

\bibitem{MY}
Guy Marin and Gregory~S. Yablonsky.
\newblock {\em Kinetics of Chemical Reactions}.
\newblock Wiley-VCH, Wienheim, Germany, 2011.

\bibitem{MinchevaCraciun2008}
M.~Mincheva and G.~Craciun.
\newblock Multigraph conditions for multistability, oscillations and pattern
  formation in biochemical reaction networks.
\newblock {\em Proceedings of the IEEE}, 96(8):1281--1291, aug. 2008.

\bibitem{signs}
Stefan M\"uller, Elisenda Feliu, Georg Regensburger, Carsten Conradi, Anne
  Shiu, and Alicia Dickenstein.
\newblock Sign conditions for injectivity of generalized polynomial maps with
  applications to chemical reaction networks and real algebraic geometry.
\newblock {\em to appear in Found. Comput. Math.}

\bibitem{santillan2001dynamic}
Mois{\'e}s Santill{\'a}n and Michael~C Mackey.
\newblock Dynamic regulation of the tryptophan operon: a modeling study and
  comparison with experimental data.
\newblock {\em Proceedings of the National Academy of Sciences},
  98(4):1364--1369, 2001.

\bibitem{schlogl1972chemical}
Friedrich Schl{\"o}gl.
\newblock Chemical reaction models for non-equilibrium phase transitions.
\newblock {\em Zeitschrift f{\"u}r Physik}, 253(2):147--161, 1972.

\bibitem{SchlosserFeinberg}
Paul~M. Schlosser and Martin Feinberg.
\newblock A theory of multiple steady states in isothermal homogeneous {CFSTR}s
  with many reactions.
\newblock {\em Chemical Engineering Science}, 49(11):1749--1767, 1994.

\bibitem{ShinarFeinberg2012}
G.~Shinar and M.~Feinberg.
\newblock Concordant chemical reaction networks.
\newblock {\em Math. Biosci.}, 240(2):92--113, 2012.

\bibitem{ShinarFeinberg2013}
G.~Shinar and M.~Feinberg.
\newblock Concordant chemical reaction networks and the species-reaction graph.
\newblock {\em Math. Biosci.}, 241(1):1--23, 2013.

\bibitem{Emergence}
Dan Siegal-Gaskins, Maria~Katherine Mejia-Guerra, Gregory~D. Smith, and Erich
  Grotewold.
\newblock Emergence of switch-like behavior in a large family of simple
  biochemical networks.
\newblock {\em PLoS Comput. Biol.}, 7(5):e1002039, 05 2011.

\bibitem{WiufFeliu_powerlaw}
C.~Wiuf and E.~Feliu.
\newblock Power-law kinetics and determinant criteria for the preclusion of
  multistationarity in networks of interacting species.
\newblock {\em SIAM J. Appl. Dyn. Syst.}, 12:1685--1721, 2013.

\bibitem{kinetic-book}
G.S. Yablonskii, V.I. Bykov, A.N. Gorban, and V.I. Elokhin.
\newblock {\em Kinetic Models of Catalytic Reactions}.
\newblock Comprehensive Chemical Kinetics, vol.\ 32, ed. by R.G. Compton,
  Elsevier, Amsterdam, 1991.

\end{thebibliography}

%
%
%
%
%
%

\end{document}